\newcommand{\norme}[1]{|\!| #1|\!|}
\def \E{I\!\!E}
\def \P{I\!\!P}
\def \R{{I\!\!R}}
\newcommand{\N}{\mathbb{N}}
\newcommand{\D}{\mathcal{D}}
\newcommand{\B}{\mathcal{B}}
\newcommand{\cP}{\mathcal{P}}
\newcommand{\indiq}{{\mathbbm{1}}}
\newcommand{\law}[1]{{\mathcal{L}( #1 )}}
\newcommand{\ap}{{\alpha_+}}
\newcommand{\am}{{\alpha_-}}
\newcommand{\convlaw}{{\overset{\mathcal{L}}{\to}}}
\newcommand{\ec}{\color{black}}
\newtheorem{thm}{Theorem}[section]
\newtheorem{prop}[thm]{Proposition}
\newtheorem{cor}[thm]{Corollary}
\newtheorem{assumption}{Assumption}
\newtheorem{lem}[thm]{Lemma}
\newtheorem{defi}[thm]{Definition}
\newtheorem{ex}{Example}
\begin{document}

\begin{frontmatter}

\AtBeginShipoutFirst{%
    \begin{tikzpicture}
         {   \includegraphics[width=4cm]{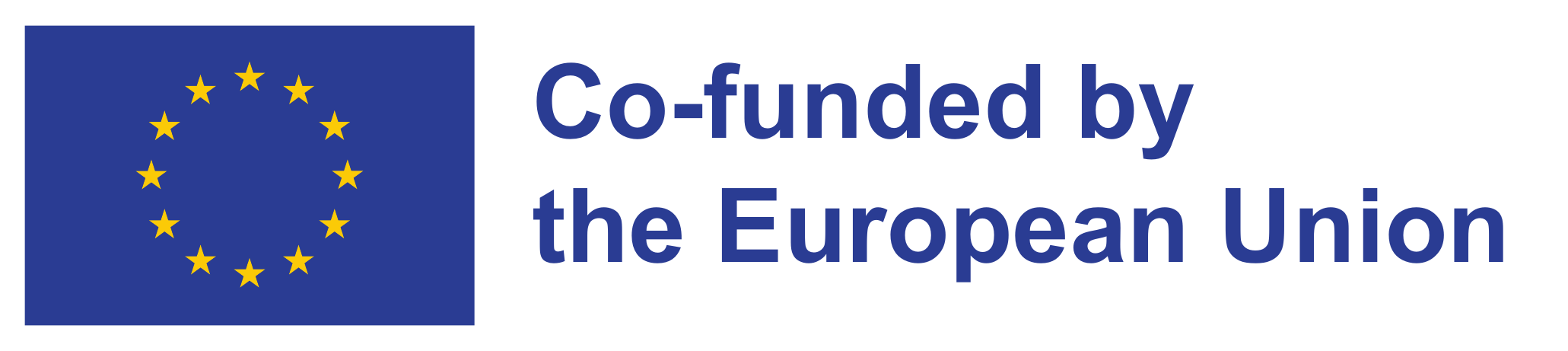} 
        };
    \end{tikzpicture}%
}

\title{Weak conditional propagation of chaos for systems of interacting particles with nearly stable jumps}

\runtitle{Systems of particles with nearly stable jumps}

\begin{aug}

\author{\fnms{Eva} \snm{L\"ocherbach}\thanksref{m1}\ead[label=e4]{eva.loecherbach@polytechnique.edu}},
\author{\fnms{Dasha}
\snm{Loukianova}\thanksref{m2}\ead[label=e5]{dasha.loukianova@univ-evry.fr}}
\author{\fnms{Elisa} 
\snm{Marini}\thanksref{m3}\ead[label=e6]{marini@ceremade.dauphine.fr}}

\address{\thanksmark{m1}CMAP, Ecole Polytechnique, Institut Polytechnique de Paris, 91120 Palaiseau
  \thanksmark{m2}Laboratoire de Math\'ematiques et Mod\'elisation d'\'Evry,  Universit\'e
  d'\'Evry Val d'Essonne, UMR CNRS 8071 
\thanksmark{m3}CEREMADE, UMR CNRS 7534, Universit{\'e} Paris Dauphine-PSL
  }

\runauthor{E. L\"ocherbach and D. Loukianova and E. Marini}

 \end{aug}

\begin{abstract}
We consider a system of $N$ interacting particles, described by SDEs driven by Poisson random measures, where the coefficients depend on the empirical measure of the system. Every particle jumps with a jump rate depending on its position. When this happens, all the other particles of the system receive a small random kick which is distributed according to a heavy-tailed random variable belonging to the domain of attraction of an $\alpha$-stable law and scaled by $N^{-1/\alpha},$ where $0 <  \alpha < 2.$ We call these jumps \textit{collateral jumps}. Moreover, in case $ 0 < \alpha < 1, $ the jumping particle itself undergoes a macroscopic, \textit{main} jump. Such systems appear in the modeling of large neural networks, such as the human brain. 

Using a representation of the collateral jump sum as a time-changed random walk, we prove the convergence in law, in Skorokhod space, of this system to a limit infinite-exchangeable system of SDEs driven by a common stable process. This stable process arises due to the stable central limit theorem, and 
the particles in the limit system are independent and identically distributed, conditionally on that. That is, the $N$-particle system exhibits the conditional propagation of chaos property. \\
\noindent{\bf MSC2020: } 60E07; 60G52; 60F05; 60B10; 60K35

\end{abstract}

 \begin{keyword}
 \kwd{Mean field interactions}
 \kwd{Piecewise deterministic Markov processes}
 \kwd{Domain of attraction of a stable law}
 \kwd{Stable central limit theorem}
 \kwd{Conditional propagation of chaos}
\kwd{Exchangeability}
\kwd{$\alpha-$stable L{\'e}vy processes}
\kwd{Time-changed random walks with stable increments} 
 \end{keyword}
 
\end{frontmatter}

\section{Introduction}

In this paper, we study the weak convergence in the large population limit of the piecewise deterministic Markov process $X^N = (X^{N,i})_{i=1}^N$ on $\R^N$ evolving according to the system of stochastic differential equations 
\begin{multline}\label{eq:finite:system:intro}
	X^{N, i}_t =  X^{i}_0 +   \int_0^t  b(X^{N, i}_s , \mu_s^N )  ds +  \int_{[0,t]\times\R_+ \times \R } \psi (X^{N, i}_{s-}, \mu_{s-}^N ) \indiq_{ \{ z \le  f ( X^{N, i}_{s-}) \}} \pi^i (ds,dz, du ) \\
	+ \frac{1}{{N}^{ 1/\alpha} }\sum_{ j \neq i } \int_{[0,t]\times\R_+\times\R  }u \indiq_{ \{ z \le  f ( X^{N, j}_{s-}) \}} \pi^j (ds,dz,du) , t \geq 0, 1 \le i \le N. 
\end{multline} 
In the above equation,  $ \mu_t^N = \frac1N \sum_{i=1}^N \delta_{X_t^{N, i } } $ is the empirical measure of the system at time $t$, $(\pi^{i})_{i=1,\ldots,N}$ is a family of independent Poisson random measures on $\R_+ \times \R_+ \times \R$ with intensity $ds dz \nu(du),$ and $\nu$ is a law belonging to the domain of attraction of a strictly stable law of index $\alpha\in (0,2)\setminus\{1\} . $ In the sequel, we will also call such a law a nearly stable law. We are interested in the behavior of such systems as $ N \to \infty.$ 

System \eqref{eq:finite:system:intro} describes a family of $N$ particles taking values in $ \R $ with the following behavior:
\begin{enumerate}
	\item Any particle $i$ jumps at a rate $f$ depending on the position of the particle.   
	When such a jump happens, we say that particle $i$ performs a \textit{main jump}, and its position changes by the value taken by the function $\psi$. This jump height may also depend on the current state of the empirical measure of the whole system. In what follows, we shall assume that both $f$ and $ \psi$  are bounded Lipschitz continuous functions. 
	\item At the same time, the main jump of particle $i$ triggers \textit{simultaneous jumps} of all the other particles $j\neq i$: when some particle $i$ has a main jump, the positions of all other particles $j\neq i$ in the system are changed by the same random amount $u \sim \nu$, rescaled by $N^{1/\alpha}$. Borrowing the terminology coined in \cite{andreis_mckeanvlasov_2018}, we say that the other particles perform a \textit{collateral jump}.  
	\item In between consecutive jumps, each particle follows a deterministic flow with drift $b$, which may also depend on the empirical measure of the system. $b$ is supposed to be bounded and Lipschitz continuous as well. 
\end{enumerate}

The interest in interacting particle systems whose components can jump simultaneously is motivated for instance by neuroscience applications, see e.g. \cite{Brillinger, Cessac, DGLP, Touboul, GLP}. In this context, collateral jumps correspond to the synaptic weight of a neuron on its postsynaptic partners, and main jumps to the hyperpolarization of a neuron after a spike.  

Models similar to \eqref{eq:finite:system:intro} appeared in \cite{DGLP} and \cite{FL}, 
where simultaneous jumps were deterministic and rescaled by $1/N,$ leading, in the infinite-volume limit, to the disappearance of the inter-particle interactions and to asymptotic independence of the particles (propagation of chaos). The articles \cite{ELL1, ELL2, ELLPTRF} further investigated system \eqref{eq:finite:system} in the case $\alpha=2$ when $\nu$ belongs to the domain of attraction of a normal distribution and when jumps are rescaled by $N^{- 1/2}.$

In \cite{LLM} and the companion paper \cite{dasha-eva}, we generalized these previous works to the setting $\alpha\in (0,2)\setminus\{1\}$ and we 
provided explicit error bounds for the strong convergence of the $N$-particle system \eqref{eq:finite:system:intro} to its corresponding limit dynamics. This limit dynamics is an infinite-exchangeable system $ (\bar X^i )_{ i \geq 1 } $  following a conditional McKean-Vlasov equation given by 
\begin{multline}\label{eq:limitsystem}
\bar X^{i}_t =  X^{i}_0 +   \int_0^t  b(\bar X^{ i}_s , \bar \mu_s )  ds +  \int_{[0,t]\times\R_+ } \psi (\bar X^{ i}_{s-}, \bar \mu_{s-} )  \indiq_{ \{ z \le  f ( \bar X^{ i}_{s-}) \}} \bar \pi^i (ds,dz) 
\\
+ \int_{[0,t] } \left( \bar \mu_{s-}(f)  \right)^{1/ \alpha}   d S_s^\alpha ,  \qquad i \geq 1 ,
\end{multline} 
with $ (\bar \pi^i)_{ i \geq 1 }$ a family of independent Poisson random measures on $ \R_+ \times \R_+$ with Lebesgue intensity,  $S^\alpha$ a strictly $\alpha$-stable process independent of them and of the i.i.d. initial values $ (X^i_0)_{i \geq 1}.$ Moreover,  $ \bar \mu_s = {\mathcal L} ( \bar X^1_s | S^\alpha_u, u \le s )$ is the directing measure of the system (see \cite{aldous} Definition (2.6) and Theorem \ref{thm:dir_m} below), and for technical reasons we assume that $\psi(\cdot)\equiv 0$ for $\alpha>1$. In \cite{LLM} and \cite{dasha-eva}, we have also provided sufficient conditions on the parameters $ b, \psi, f $ of the system to ensure the well-posedness of \eqref{eq:limitsystem}, and we will work under these conditions hereafter.  

By strong error bounds we mean that we provided, for any fixed $t > 0, $ a coupled construction of $ X_t^{N, i } $ and $ \bar X^i_t ,$ on the same probability space, and that we obtained an explicit control of $ \E ( d (  X_t^{N, i },  \bar X^i_t ) ), $ where $ d( x, y) $ is a convenient distance on $ \R, $ depending on $ \alpha.$ 

The present paper is devoted to a weak convergence approach on the path space $D(\R_+, \R) , $ without aiming at obtaining any convergence rate. We work under weaker assumptions on the law $ \nu $ of the collateral jumps and fully exploit the underlying probabilistic features of the process.  

Our main result, Theorem \ref{theo:main}, states that the sequence of empirical measures $(\mu^N)_N$ of \eqref{eq:finite:system:intro} on the path space $D(\R_+, \R)  $ converges in law to the directing measure $\bar\mu$ of the limit system \eqref{eq:limitsystem}. 
This implies (Proposition (7.20) in \cite{aldous}) that the joint law $\law{(X^{N,i})_{i\leq k}}$ of any $k$-tuple of particles in the finite system converges to that of the corresponding $k$-tuple in the limit system \eqref{eq:limitsystem}, that is, 
$$
	\lim_{N\to +\infty} \law{(X^{N,i})_{i\leq k}} = \int_{\mathcal{P}(D(\R_+, \R))} m^{\otimes k }Q(d m) , \mbox{ where }  Q=\law{\bar\mu} ,
$$ 
and where $\mathcal{P}(D(\R_+, \R)) $ denotes the set of all probability measures on $ D(\R_+, \R).$ 

In particular, system \eqref{eq:finite:system:intro} exhibits the \textit{conditional propagation of chaos property}: in the limit when $N\to +\infty$, particles become conditionally independent and identically distributed, given the source of common noise. In our model, this source of common noise is the presence of the stable process $ S^\alpha .$  

Conditional propagation of chaos phenomena have been largely studied in the literature. Very often the source of common noise represents some environmental noise. In most studies, this environmental noise is given by the presence of a Brownian motion that drives every particle's dynamics, and it is usually already present at the level of the finite particle system (see \cite{coghi_propagation_2016, carmona_mean_2016, dermoune_propagation_2003}). To our best knowledge, the situation where the common noise only appears in the limit dynamic is completely new and has only been studied in \cite{ELL1, ELLPTRF}, following a weak approach, and in \cite{ELL2, LLM, dasha-eva}, following a strong approach. 

Let us compare the approach presented in this article with the proof strategy of \cite{ELL1, ELLPTRF}, which both worked in a diffusive setting. The main point of these articles was, once the tightness of the sequence of empirical measures was established, to propose a roadmap to identify any possible limit law. This was done by showing that any sub-sequential limit solves an \textit{ad hoc} martingale problem. Exploiting the exchangeability, the conclusion then followed proving that the solution to this martingale problem is unique.

On the contrary to this strategy, we believe that the new approach we propose here reflects more accurately the true probabilistic reason for the weak convergence of our system, both in case $ 0 < \alpha < 2 $ and $ \alpha = 2.$ The main point is that we are able to prove that the sum of the collateral jumps 
\begin{equation}\label{eq:rw_collja}
\frac{1}{{N}^{ 1/\alpha} }\sum_{ j =1}^N \int_{[0,t]\times\R_+\times\R  } u \indiq_{ \{ z \le  f ( X^{N, j}_{s-}) \}} \pi^j (ds,dz,du)
\end{equation}
converges in law to a time-changed stable process 
$$
 S^\alpha_\cdot \circ A_t, 
$$
where $S^\alpha_\cdot$ is a strictly stable process and $A_\cdot = \int_0^\cdot \bar\mu_s(f) ds$ is the integrated mean jump intensity of the limit system (where the mean is taken with respect to the directing measure).  
Intuitively, this holds since \eqref{eq:rw_collja} can be rewritten as a time-changed compound Poisson process with i.i.d. $\nu$-distributed summands $(U_l)_l$, 
$$
\left( \frac{1}{N^{1/\alpha}}\sum_{l=1}^{\tilde P_{N\cdot}} U_l  \right)  \circ \int_0^t  \mu^N_{s}(f) ds ,
$$
where $\tilde P_\cdot$ is a homogeneous unit-rate Poisson process. The time change $N \int_0^t \mu^N_{s}(f) ds$ involves the intensity $N \mu^N_{s}(f)$ of the main jumps in system \eqref{eq:finite:system:intro}. The stable limit theorem and continuity arguments then allow to conclude. This result is rigorously stated in Proposition \ref{prop:convergence_J}. 
 
While the above result is clearly the core of our proof, it is not sufficient to prove our convergence result. In particular, we have to identify the joint law of the main jumps and of the collateral jump part in the limit. To do so, we rely on semimartingale convergence theorems and the identification of their respective limits by means of their characteristics, relying on the seminal book \cite{JS}. 

We believe that the approach that we propose in this paper can be extended to a much larger class of interacting particle systems, including systems that are spatially structured. 
 
The paper is organized as follows. In Section \ref{sec:model_ass_mainres}, we present the model and detail our assumptions and state the main results. 
In Section \ref{sec:tightnes_muN}, we prove tightness of $( (X^{N,i})_i, \mu^N)_N$. Section \ref{sec:5} is devoted to the proof of our main result and is structured as follows.  
In Section \ref{sec:common_convergence_results} we consider any weakly convergent subsequence of $(X^N, \mu^N)$ and show that its limit in law is $(X, \mu)$, where $\mu$ is the limit of $(\mu^N)_N$ along a fixed convergent subsequence and where $X = (X^{i})_i$ is an infinite-exchangeable system directed by $\mu$. 
This allows to pass to the limit in distribution in equation \eqref{eq:finite:system:intro} and to identify the SDEs satisfied by $X$ - which is done in Subsection \ref{sec:passage_to_lim:alpha<1} for $\alpha<1$ and in Subsection \ref{sec:passage_to_lim:alpha>1} for $\alpha>1$. These SDEs have the structure of \eqref{eq:limitsystem}, but the form of the directing measure $ \mu$ as conditional law given $ S^\alpha $ still has to be proved. This proof is provided in Section \ref{sec:uniqueness}. Here we identify the limit law and conclude by proving that the system of SDEs obtained for the process $X$ coincides with the system \eqref{eq:limitsystem}. The fact that \eqref{eq:limitsystem} possesses a unique strong solution (\cite{LLM}) is crucial here. This allows to conclude that the sequence $(\mu^N)_N$ converges in law to $\mu=\bar\mu$.  
Section \ref{sec:appendix} contains some useful lemmas and continuity results.

\section{Notation}\label{sec:notation}
Throughout this article, we will use the following notation. 

$\R^* \coloneqq \R \setminus \{0\} .$

$ C\coloneqq C(\R_+, \R)$  denotes the space of continuous functions, endowed with the topology of uniform convergence on compact sets: for $(\gamma^N)_N \subset C, \, \gamma\in C$, as $ N \to \infty, $ $\gamma^N \to \gamma$ if $ \lim_{ N \to \infty } \sup_{t\leq T} |\gamma^N(t) - \gamma(t)| =  0$ for any $T>0.$

$D \coloneqq D(\R_+, \R)$ denotes the space of c{\`a}dl{\`a}g functions, endowed with the Skorokhod $J_1$ topology (see  \cite{JS} VI.1.b., see also \cite{kallenberg}, Appendix A.5). Moreover, $ \D \coloneqq {\mathcal D} ( \R_+ , \R ) $ denotes the Borel sigma-algebra of $ D( \R_+, \R).$ 

\sloppy $\Lambda \coloneqq \{ \lambda: \R_+ \to \R_+ \, | \, \lambda \text{ is strictly increasing, continuous,} \, \lambda(0)=0, \, \lim_{t\to+\infty}\lambda(t)= +\infty \} \subset C$ denotes the space of time changes needed in the definition of the Skorokhod topology.  

We will often use that, as $ N \to \infty, $  
convergence of $\gamma^N$ to $\gamma$ with respect to the Skorokhod topology occurs if and only if there exists a sequence $\{\lambda^N\}_N \subset \Lambda$ such that for all $ T > 0, $ 
\begin{enumerate}
	\item	$\lim_{N\to +\infty} \sup_{t\leq T} |\lambda^N(t) - t| = 0 ;$
	\item $\lim_{N\to +\infty} \sup_{t\leq T} | \gamma^N(\lambda^N(t)) - \gamma(t) | =  0 ;$
\end{enumerate}
see Lemma A.5.3 of \cite{kallenberg}. 

Often we will restrict the space of time changes and rather consider 

$\Lambda^{\lambda_1, \lambda_2 }  \coloneqq \{ \lambda: \R_+ \to \R_+ \, | \, \lambda \in \Lambda \text{ and } \lambda_1 \le \frac{\lambda(t)-\lambda(s)}{t-s} \le \lambda_2  \; \forall  0 \le  s<t < \infty \} ,$ where $ 0 < \lambda_1 < \lambda_2 $ are two fixed constants, which is the subspace of $ \Lambda $ containing all time changes having a modulus of continuity which is controlled by $ \lambda_1 $ and $ \lambda_2. $ 

$\mathcal{P}(S) $ denotes the space of probability measures on $ (S, {\mathcal S}), $ where $S$ is a generic Polish space and  $ {\mathcal S} $ its  Borel sigma-algebra. 

$\mathcal{P}_1(\R)$ denotes the space of probability measures on $\R$ having a finite first moment. 



$C_1(\R^d)$ (see \cite{JS} VII.2.7) is a convergence-determining class for the weak convergence induced by the set of bounded continuous functions $g: \R^d\to \R$ which are zero around zero. 

$X_n \convlaw Y$ denotes the convergence in distribution of $ X_n$ to $ Y.$

For two probability measures $\nu_1, \nu_2 \in \mathcal{P}_1 (\R),$ the Wasserstein distance of order $1$ between $\nu_1$ and $\nu_2$  is defined as
\[
W_1(\nu_1,\nu_2)=\inf_{\pi\in\Pi(\nu_1,\nu_2)} \int_\R\int_\R |x-y| \pi(dx,dy)  ,
\]
where $\pi$ varies over the set $\Pi(\nu_1,\nu_2)$ of all probability measures on the product space $\R\times \R$ with marginals $\nu_1$ and $\nu_2$. 

For some $ 0 < q < 1, $ we will also consider the distance
\begin{equation*}
 d_{q}(x,y) = |x-y| \wedge |x-y|^{q},
\end{equation*} 
and, for any $\nu_1,\, \nu_2 \in \mathcal{P}_1(\R)$, 
\[ W_{d_{q}}(\nu_1,\nu_2) = \inf_{\pi \in \Pi(\mu_1,\nu_2)} \int_{\R}\int_{\R} d_{q}(x,y) \pi(dx,dy) \]
which is the Wasserstein distance associated with the metric $d_{q}$ (\cite{villani}). 

As for the classical Wasserstein distance, the Kantorovich-Rubinstein duality yields (\cite{villani}, particular case 5.16)
\[W_{d_{q}}(\nu_1,\nu_2) = \sup\{\nu_1(\varphi) - \nu_2(\varphi) : \forall x,\,y\in \R\,\, |\varphi(x) - \varphi(y)| \leq  d_{q}(x,y) \} . \]
Notice that $W_{d_{q}}(\nu_1,\nu_2)\leq W_1(\nu_1,\nu_2)$ for all $\nu_1,\,\nu_2 \in \mathcal{P}_1(\R)$. 

Throughout this article, $ S^\alpha = (S^\alpha_t)_{t \geq 0} $ denotes a strictly $\alpha-$stable L{\'e}vy process given by 
\begin{align}\label{eq:St}
	& S^\alpha_t = \int_{ [0, t ] \times \R^\ast} z M (ds, dz ) , \mbox{ if } \alpha < 1  \\ \nonumber
        & S^\alpha_t = \int_{ [0, t ] \times \R^\ast} z \tilde M (ds, dz ) , \mbox{ if } \alpha >  1 , 
  \end{align}
(see \cite{applebaum}, \cite{sato}).   
Its jump measure $M$ is a Poisson random measure on $ \R_+ \times \R^\ast $ having intensity $ds \nu^\alpha(dz)$, with
\begin{equation}\label{eq:nualpha}
\nu^\alpha(dz) = \frac{a_+}{z^{\alpha+1}}\indiq_{\{z>0\}} dz + \frac{a_-}{|z|^{\alpha+1}}\indiq_{\{z<0\}}dz ,
\end{equation}
where $ a_+, a_- \geq 0 , a_+ + a_- > 0 , $ are some fixed parameters, and $\tilde M(ds,dz) \coloneqq M(ds,dz) - \nu^\alpha(dz) ds$ denotes the compensated Poisson random measure.

\section{Model, assumptions and main results}\label{sec:model_ass_mainres}

\subsection{The finite system}
We consider an interacting particle system $(X^{N,i}_t)_{ t \geq 0 , 1 \le i \le N } $ taking values in $ \R^N $ which is solution of 
\begin{equation}\label{eq:finite:system}
   \left\{\begin{array}{rcl}
    X^{N,i}_t & = &\displaystyle X^{N,i}_0 + \int_0^t b(X^{N,i}_s, \mu^N_s) ds + \int_{[0,t]\times \R_+} \psi(X^{N,i}_{s^-}, \mu^N_{s^-}) \indiq_{\{ z \leq f(X^{N,i}_{s^-})\}} \bar\pi^i(ds,dz) \\
    && +\displaystyle  \frac{1}{N^{1/\alpha}} \sum_{j\neq i}\int_{[0,t]\times \R_+ \times \R } u \indiq_{\{ z \leq f(X^{N,j}_{s^-}) \}} \pi^j(ds,dz,du), \qquad i=1,\dots, N,\\
  X_0^{N,i}&\sim& \nu_0 , 
  \end{array}\right.
    \end{equation}
for all $t \geq 0, $  where the initial positions $ (X^i_0)_{ i \geq 1}  $ are i.i.d., distributed according to some fixed probability measure $ \nu_0.$ In the above equation, $ \mu^N = \frac1N \sum_{i=1}^N \delta_{X^{N, i} } $  is the empirical measure of the system, $ \mu^N_s = \frac1N \sum_{i=1}^N \delta_{X^{N, i} _s} $ its projection onto time $s,$ and  
$ \pi^j (ds, dz, du ), j \geq 1,$ are i.i.d. Poisson random measures on $ \R_+ \times \R_+ \times \R ,$ independent of the initial positions, having intensity measure $ ds dz \nu ( du ) ,$ where $ \nu $ is a probability measure on $ \R.$ 
Finally, $ \bar \pi^i (ds, dz ) = \pi^i ( ds, dz, \R )  $ and $ \alpha \in ( 0, 2 ) $ is a fixed parameter whose meaning will become clear in the sequel. 

Below we state conditions that ensure that system \eqref{eq:finite:system} admits a unique strong solution.

\subsection{Assumptions}
We will impose the following conditions on the coefficients $b, f $ and $\psi $ of equation \eqref{eq:finite:system}.

\begin{assumption}\label{ass:b}
\begin{enumerate}[label=\alph*)]
\item $b$ is  bounded. \label{itm:bbdd}
\item There exists a constant $ C > 0$ such that for every $x,\,y \in \R$ and every $\mu,\,\tilde \mu \in \mathcal{P}_1(\R)$, it holds $ |b(x,\mu) - b(y,\tilde \mu) | \leq C \left( |x-y| +W_{1}(\mu,\tilde \mu )\right).$ \label{itm:bLip} 
\end{enumerate}
\end{assumption}

\begin{assumption}\label{ass:f}
\begin{enumerate}[label=\alph*)]
\item $f$ is bounded. \label{itm:fbdd}
\item $f$ is Lipschitz-continuous.  \label{itm:fLip}
\item $f$ is lowerbounded by some strictly positive constant $ \underline f > 0$. \label{itm:flbdd} 
\end{enumerate}
\end{assumption}

\begin{assumption}\label{ass:psi}
\begin{enumerate}[label=\alph*)]
\item $ \psi$ is bounded. \label{itm:psibdd}
\item  There exists a constant $ C > 0$ such that for every $x,\,y \in \R$ and every $\mu,\,\tilde \mu \in \mathcal{P}_1(\R)$, it holds $ |\psi(x,\mu) - \psi(y,\tilde \mu) | \leq C \left( |x-y|  +W_{1}(\mu,\tilde \mu)\right).$ \label{itm:psilip} 
\end{enumerate}
\end{assumption}

Recall that each Poisson random measure $ \pi^i  (dt, dz, du ) $ has the same intensity $ dt dz \nu ( du).$ We now give the precise conditions on  $ \nu$  that will be needed in the sequel. To start with, we first recall the definition of domain of attraction of a stable law.  
\begin{defi}[Domain of attraction]\label{def:normal_doa}
A probability law $ \nu $ on $ \R$ belongs to the domain of attraction of a strictly stable law of index $\alpha \in (0,2)$ if for any i.i.d. sequence $(U_n)_{n \geq 1 } $ of real-valued random variables such that $ U_n \sim \nu$ there exist real-valued sequences $ (a_n)_n, (b_n)_n$ such that $ b_n > 0 $ for all $n$ and such that, as $ n \to \infty, $ 
$$ \frac{1}{b_n} \sum_{k=1}^n U_k - a_n \convlaw S^\alpha_1 ,$$
with $S^\alpha_1$ a strictly stable variable of index $\alpha$ as in \eqref{eq:St}.
\end{defi}
By  \cite{feller}, Chapter IX.8 Theorem 1 and the discussion right below this theorem,  $ \nu$ belongs to the domain of attraction of a stable law if and only if its distribution function $ F ( x) \coloneqq \nu ((-\infty, x] ) $ satisfies 
$$ 1 -  F ( x) + F( - x) \sim x^{-\alpha } L( x) , $$
as $ x \to \infty, $ where $L $ varies slowly at infinity, and 
$$ \lim_{x \to \infty} \frac{ 1 - F (x) }{  1 -  F ( x) + F( - x)  } = p, \lim_{x \to \infty} \frac{ F(-x)  }{  1 -  F ( x) + F( - x)  } = 1- p ,$$ 
for some $ p \in [0, 1].$

\begin{assumption}\label{ass:nu}
We suppose that $\nu $ belongs to the domain of attraction of a strictly stable law of index $ \alpha \in ( 0, 2) \setminus \{ 1 \}, $ such that we can choose $ b_n = n^{ 1 / \alpha } .$ \footnote{Compare to (8.13) in Chapter IX.8 of \cite{feller} for possible choices of the norming constants $b_n.$}  
In case $ \alpha \in (1, 2 ), $ we suppose moreover that 
$ \nu $ is centered.
\end{assumption}

Notice that this last assumption allows to take $a_n=0$ in Definition \ref{def:normal_doa} for all $\alpha\in (0,2)\setminus\{1\}$ (see for instance \cite{feller} IX.8).

Recall that the initial positions $ (X^i_0)_{ i \geq 1}  $ are i.i.d., distributed according to some fixed probability measure $ \nu_0.$ We assume: 
\begin{assumption}\label{ass:nu0}
$\nu_0 $ admits a finite moment of order $ (2 \alpha ) \vee 1 $ \ec in case $ \alpha < 1 $  and a finite moment of order $p $ for some $p > 2 $ \ec in case  $ 1 < \alpha < 2. $
\end{assumption}

Finally, in the case $ \alpha > 1, $ we always assume that $ \psi \equiv 0 ,$ that is, there are no main jumps present in this case.

Theorem 2.12 of \cite{LLM} implies that under the above Assumptions \ref{ass:b}--\ref{ass:nu0}, the system  \eqref{eq:finite:system} admits a unique strong solution such that for any $ t \geq 0, 1 \le i \le N,$ $ X_t^{N, i } $ has a finite moment of order $p $ for any $ p < \alpha .$

In what follows, we are interested in the large population limit of the system \eqref{eq:finite:system} and study its convergence as $ N \to \infty.$ To do so, we need to impose additional assumptions on $b$ and $ \psi.  $   

\begin{assumption}\label{ass:weak_cont}
\begin{enumerate}[label=\alph*)]
\item 
$b$ and $\psi$ are continuous with respect to the product topology of the standard Euclidean topology on $\R$ and the topology of weak convergence on $\mathcal{P}(\R)$.
\item
 In case $ \alpha < 1, $ we fix some $ 0 < \alpha_- < \alpha $ and we suppose moreover that 
 for every $x,\,y \in \R$ and every $\mu,\,\nu \in \mathcal{P}_1(\R)$, $ |b(x,\mu) - b(y,\nu) | + |\psi(x,\mu) - \psi(y,\nu) | \leq C \left( d_{ \alpha_- }(x,y) +W_{d_{ \alpha_- }}(\mu,\nu)\right).$ \ec
\end{enumerate}
\end{assumption}  
 
\begin{ex}
Any functions $b (x, m)$ and $ \psi (x, m) $ which are given by 
\[
	b(x,m) \coloneqq \int_\R B(x-y) m(dy), \qquad \psi (x, m) \coloneqq \int_\R \Phi ( x- y ) m (dy) ,
\]
with $B, \Phi$ bounded Lipschitz continuous functions, satisfy all the above hypotheses. Indeed, it is easily seen that in this case, $b(x_n, m_n ) $ converges to $b(x,m)$ whenever $x_n \to x $ and $ m_n $ converges weakly to $m,$ and that there exists $C>0$ such that $| b(x,\nu_1) - b(y,\nu_2) | \leq C (|x-y|  + W_1(\nu_1, \nu_2))$. The same arguments apply to $ \psi ( x, m ) .$  In case $ \alpha < 1, $ we have to assume additionally that $ B$ and $ \Phi$ are Lipschitz continuous with respect to $ d_{\alpha_-}.$ \ec 
\end{ex}

\subsection{The limit system}
The limit system associated with our particle system is the infinite-exchangeable system 
$ ( \bar X^i)_{ i \geq 1 } $ which is solution to
	\begin{equation}\label{eq:limit:system}
\left\{\begin{array}{rcl}
	\bar X^i_t & = &\displaystyle  \bar X^i_0 + \int_0^t b(\bar X^i_s, \bar\mu_s) ds + \int_{[0,t]\times \R_+} \psi(\bar X^{i}_{s^-}, \bar\mu_{s^-}) \indiq_{\{ z\leq f(\bar X^{i}_{s^-})\} } \bar\pi^i(ds,dz)\\
	&& +\displaystyle  \int \bar\mu^{1/\alpha}_{s^-}(f) d S^\alpha_s ,\qquad i\geq 1 ,\\ 
 \bar \mu &=& {\mathcal L}( \bar X^i | S^\alpha ) ,
\end{array}\right.
\end{equation}
for all $ t \geq 0.$ Here, the initial positions are i.i.d. such that for any $i,$  $
 \bar X_0^{i}\sim \nu_0.$ Moreover, $(\bar\pi^{i})_i$ is a family of i.i.d. Poisson random measures on $\R^2_+$ having intensity $ ds dz,$ independent of the collection of initial conditions $(\bar X^i_0)_{i \geq 1}, $  and $S^\alpha $ is a strictly $\alpha$-stable process independent of  the collection of Poisson random measures $(\bar\pi^{i})_i$ and of the $(\bar X^i_0)_{i \geq 1}. $  The measure $\bar \mu$ on $(D (\R_+, \R) ,\D(\R_+, \R))$ is the regular conditional distribution of $\bar X^i$ given $S^\alpha, $  and $\bar \mu_s$ is its projection onto time $s$.
Lemma \ref{lem:mut} in the Appendix section shows that the stochastic process $\bar \mu_t(f)=\E[f(\bar X^i_t)|S^\alpha]=\int_{ \R }f(\gamma)d\bar\mu_t(\gamma)$ is c\`adl\`ag, and hence the integral with respect to the stable process is well defined. 
Finally, we recall that  $\psi \equiv 0$ if $ \alpha > 1.$ 

It was shown in \cite{LLM} that under Assumptions \ref{ass:b}-\ref{ass:psi} and \ref{ass:nu0}, the system \eqref {eq:limit:system} admits a unique strong solution. Moreover, the infinite system \eqref {eq:limit:system} is exchangeable. Since the Skorohod space is a Borel space, using the De Finetti-Hewitt-Savage theorem (see for instance \cite{aldous}), the law of \eqref {eq:limit:system} is the law of a mixture of i.i.d. $D(\R_+, \R)-$ valued random variables which is directed by some random measure. It is easy to see that  $\mathcal{L}(\bar X^1 | S^\alpha)$ is in fact the directing measure of  \eqref{eq:finite:system} (see  Proposition \ref{thm:dir_m} below for a proof).

\subsection{Main results} 
Recall that we denoted by $\mu^N$  the empirical measure of \eqref{eq:finite:system} and by $(\bar X^{i})_{i \geq 1 }$ the unique strong solution of \eqref {eq:limit:system}. Recall also that $ \psi \equiv 0 $ if $ \alpha > 1$ in  \eqref{eq:finite:system} and \eqref {eq:limit:system}.
\begin{thm}\label{theo:main}
Grant Assumptions \ref{ass:b}-\ref{ass:weak_cont}.  Then  $\mu ^N$  converges in distribution, in ${\mathcal P} ( D ( \R_+ , \R ) ) ,$ to $\bar \mu   = {\mathcal L}( \bar X^1 | S^\alpha ) . $
\end{thm}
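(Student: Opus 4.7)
The plan is to follow the roadmap outlined in the introduction, combining a tightness argument, a characterization of subsequential limits through a semimartingale convergence theorem, and uniqueness of the limit SDE \eqref{eq:limit:system}.

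\textbf{Step 1 (Tightness).} First I would establish tightness of the sequence $((X^{N,i})_{i\geq 1}, \mu^N)_N$ in $D(\R_+, \R)^{\N} \times {\mathcal P}(D(\R_+, \R))$, working with the standard product topology on the first factor and the topology of weak convergence on the second. Thanks to exchangeability, tightness of $(\mu^N)_N$ reduces to tightness of $\mathcal{L}(X^{N,1})$ on $D(\R_+, \R)$, for which one uses the Aldous criterion together with the uniform moment bounds from Theorem 2.12 of \cite{LLM} and the fact that $b, f, \psi$ are bounded. The heavy-tailed collateral jumps, rescaled by $N^{-1/\alpha}$, are controlled using the stable central limit theorem applied to the collateral sum \eqref{eq:rw_collja}, whose compensator stays bounded on compacts uniformly in $N$.

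\textbf{Step 2 (Subsequential limits and directing measure).} Fix a convergent subsequence along which $((X^{N,i})_i, \mu^N) \convlaw ((X^i)_i, \mu)$. By exchangeability of $(X^{N,i})_{i \leq N}$ and the continuous-mapping theorem, the limit $(X^i)_{i\geq 1}$ is infinite-exchangeable with empirical measure equal to $\mu$, so by de Finetti (see Proposition \ref{thm:dir_m} together with \cite{aldous}), $\mu$ is the directing measure of $(X^i)_{i\geq 1}$ and the $X^i$ are i.i.d. conditionally on $\mu$.

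\textbf{Step 3 (Core convergence of the collateral jump process).} The key step, which I expect to be the hardest, is Proposition \ref{prop:convergence_J}: the collateral jump process \eqref{eq:rw_collja} converges in law to $S^\alpha \circ A$ with $A_t = \int_0^t \bar\mu_s(f)\,ds$. The idea is to rewrite \eqref{eq:rw_collja} as a time-changed compound random walk
\[
\left(\frac{1}{N^{1/\alpha}}\sum_{\ell=1}^{\tilde P_{Ns}} U_\ell\right) \circ \int_0^t \mu^N_s(f)\,ds ,
\]
where $\tilde P$ is a unit-rate Poisson process and $U_\ell \sim \nu$ are i.i.d.\ Then combine the Skorokhod convergence of the rescaled random walk to $S^\alpha$ (Assumption \ref{ass:nu} and Definition \ref{def:normal_doa}) with the convergence of $\int_0^\cdot \mu^N_s(f)\,ds$ to $A$ in $C$, noting that $A$ is continuous so the composition map in Skorokhod space is continuous at this point.

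\textbf{Step 4 (Passing to the limit in the SDE).} Conditional on Step 3, I would pass to the limit jointly in \eqref{eq:finite:system}. The drift term passes thanks to Assumption \ref{ass:weak_cont} and continuity of $\mu \mapsto \int_0^t b(\cdot,\mu_s)\,ds$; the main jump term is handled via semimartingale convergence using the characteristics framework of \cite{JS} (for $\alpha < 1$, with the distance $d_{\alpha_-}$ used to handle the unboundedness of $|\psi|$ against the Lévy measure of $S^\alpha$); the collateral term converges by Step 3. The main technical task is to identify the joint limit of the main jump part and the collateral part --- this is done by computing the characteristics of the $N$-particle semimartingale and showing they converge to the characteristics of the candidate limit SDE, so that one can invoke Theorem IX.2.4 of \cite{JS}. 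The outcome is that $(X^i)_{i\geq 1}$ satisfies a system of the form \eqref{eq:limit:system} but with $\mu$ instead of $\bar\mu$.

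\textbf{Step 5 (Uniqueness and identification of the directing measure).} It remains to show that this SDE driven by $S^\alpha$ with directing measure $\mu$ forces $\mu = \mathcal{L}(X^1 \mid S^\alpha)$. Since $\mu$ is already the directing measure of $(X^i)_{i\geq 1}$, conditional on $\mu$ the $X^i$'s are i.i.d.; combined with the SDE structure, this implies that $\mu$ is measurable with respect to the $\sigma$-algebra generated by $S^\alpha$ via the (strong) solution map provided by \cite{LLM}. One then identifies $\mu = \mathcal{L}(X^1 \mid S^\alpha) = \bar\mu$ by strong uniqueness of \eqref{eq:limit:system}. Since the limit is the same along every convergent subsequence, the full sequence $\mu^N$ converges in law to $\bar\mu$.

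The hardest step is clearly Step 3 combined with the joint characteristic identification in Step 4, because it is where the stable central limit theorem for the collateral jumps is coupled in a non-trivial way with the random time change $\int_0^\cdot \mu^N_s(f)\,ds$ whose limit is itself random (measurable with respect to $S^\alpha$), and where we must carefully argue that the composition and the main jump martingale problem pass jointly to the limit.
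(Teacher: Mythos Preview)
Your roadmap matches the paper's structure closely (tightness $\to$ representation of subsequential limits $\to$ identification), and Steps~1--4 are essentially the paper's Propositions~\ref{thm:tightness}, \ref{prop:convergence_couple}, \ref{prop:convergence_J} and Theorem~\ref{theo:main-repr}. Two remarks on those steps: in Step~3 the time change $A$ must be defined with the \emph{subsequential} limit $\mu$, not $\bar\mu$ (you do not yet know they coincide), and the continuity of the composition map relies on the fact that all the $A^N$ lie in $\Lambda^{\underline f,\|f\|_\infty}$ (Lemma~\ref{lem:continuity_of_composition}), not merely on $A$ being continuous; one also needs Lemma~\ref{lem:supp_mu} to ensure $\mu$ has no fixed discontinuity times so that $m\mapsto\int_0^\cdot m_s(f)\,ds$ is continuous at $\mu$.

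The genuine gap is in Step~5. Your sentence ``combined with the SDE structure, this implies that $\mu$ is measurable with respect to the $\sigma$-algebra generated by $S^\alpha$ via the (strong) solution map provided by \cite{LLM}'' does not follow. The strong solution map of \cite{LLM} expresses $\bar X^i$ as a measurable function of $(X^i_0,\bar\pi^i,S^\alpha)$ \emph{for the system whose measure is} $\bar\mu=\mathcal L(\bar X^1\mid S^\alpha)$. Here you only know that $X^i$ solves an SDE in which the coefficient depends on an abstract directing measure $\mu$; freezing $\mu$ gives $X^i=\Psi(X^i_0,\bar\pi^i,S^\alpha,\mu)$, and the de Finetti structure only yields $\mu=\mathcal L(X^1\mid \mu)$ (a tautology) or at best $\mu=\mathcal L(X^1\mid S^\alpha,\mu)$. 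Nothing in the representation Theorem~\ref{theo:main-repr} rules out that $\mu$ carries randomness beyond $\sigma(S^\alpha)$, so you cannot invoke strong uniqueness of \eqref{eq:limit:system} until you have already shown $\mu$ is $\sigma(S^\alpha)$-measurable.

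The paper closes this gap differently (Theorem~\ref{theo:main-ident}): on the probability space carrying $(X^i_0,\bar\pi^i,S^\alpha)$ from the representation, it constructs the \emph{true} limit $\bar X^i$ as the strong solution of \eqref{eq:limit:system} with the same noise, and then proves $X^i_t=\bar X^i_t$ a.s.\ directly by a Gr\"onwall argument. The key trick is to interpose the empirical measures $\mu^{N,X}_s=\frac1N\sum_{j\le N}\delta_{X^j_s}$ and $\mu^{N,\bar X}_s$ between $\mu_s$ and $\bar\mu_s$: the distances $W_1(\mu_s,\mu^{N,X}_s)$ and $W_1(\bar\mu_s,\mu^{N,\bar X}_s)$ are controlled by Fournier--Guillin type bounds (Lemma~5.7 of \cite{LLM}), while $W_1(\mu^{N,X}_s,\mu^{N,\bar X}_s)$ is bounded by $\frac1N\sum_j|X^j_s-\bar X^j_s|$ via the diagonal coupling, which by exchangeability feeds back into the Gr\"onwall loop. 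Letting $N\to\infty$ then $K\to\infty$ (after localizing the big jumps of $S^\alpha$) gives $X^i=\bar X^i$ and hence $\mu=\bar\mu$ a.s. This step is where the quantitative estimates of \cite{LLM} (their Lemmas~5.1--5.3) are genuinely needed, and it is not a formal consequence of strong uniqueness alone.
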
 

Assume that $D(\R_+ ,\R)^{\N^*}$ is endowed with the product
topology. Then the following result is a consequence of Proposition 7.20 of \cite{aldous}.

\begin{cor}\label{cor:main}
Grant the assumptions of Theorem \ref{theo:main}. Then $(X^{N, i })_{ 1 \le i \le N} $  converges in distribution, in $D(\R_+ ,\R)^{\N^*},$ to $ (\bar X^i )_{ i \geq 1 }.$  \end{cor}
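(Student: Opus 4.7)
The plan is to reduce the statement to a direct application of Proposition 7.20 in \cite{aldous}, using Theorem \ref{theo:main} as the sole ingredient beyond exchangeability. The preparatory observations are routine, and I do not expect a genuine obstacle here; the argument is almost bookkeeping.

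First, I would record that for each fixed $N$, the $D(\R_+,\R)$-valued random vector $(X^{N,1},\dots,X^{N,N})$ is exchangeable. This follows from the symmetry of system \eqref{eq:finite:system}: the coefficients $b,\psi,f$ depend on the other particles only through the (permutation-invariant) empirical measure $\mu^N$, the initial conditions $(X_0^i)_{i\ge 1}$ are i.i.d.\ under $\nu_0$, and the driving Poisson random measures $(\pi^i)_{i\ge 1}$ are i.i.d. Hence any permutation of the indices $\{1,\dots,N\}$ leaves the law of the vector invariant.

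Next, I would identify $\bar\mu=\mathcal{L}(\bar X^1\mid S^\alpha)$ as the directing measure of the infinite exchangeable limit sequence $(\bar X^i)_{i\ge 1}$; this is exactly the content of Proposition \ref{thm:dir_m} (invoked already in the paper and justified by the De Finetti--Hewitt--Savage theorem applied on the Borel space $D(\R_+,\R)$). By Theorem \ref{theo:main}, the random empirical measures $\mu^N$ converge in distribution in $\mathcal{P}(D(\R_+,\R))$ to $\bar\mu$. These two facts put us exactly in the setting of Proposition 7.20 of \cite{aldous}: an exchangeable triangular array whose empirical measures converge in law to the directing measure of a prescribed infinite exchangeable sequence.

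Invoking that proposition then gives, for every fixed $k\ge 1$, the joint convergence in law
\[
(X^{N,1},\dots,X^{N,k})\;\convlaw\;(\bar X^1,\dots,\bar X^k)
\]
in $D(\R_+,\R)^k$ (equipped with the product Skorokhod topology). Finally, since weak convergence in the countable product space $D(\R_+,\R)^{\N^*}$ endowed with the product topology is characterized by convergence of all finite-dimensional projections, this upgrades to
\[
(X^{N,i})_{1\le i\le N}\;\convlaw\;(\bar X^i)_{i\ge 1}\qquad\text{in }D(\R_+,\R)^{\N^*},
\]
which is the desired conclusion. (The only subtle point is the embedding of the finite vectors into the infinite product; one may, for instance, append any fixed element of $D$ in the slots $i>N$ without affecting convergence in the product topology, which only tests the first $k$ coordinates for each $k$.)
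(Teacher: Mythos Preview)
Your proof is correct and follows exactly the route the paper indicates: the paper simply states that the corollary is a consequence of Proposition 7.20 of \cite{aldous}, and your argument spells out precisely how that proposition applies (exchangeability of the finite systems, $\bar\mu$ as directing measure via Proposition \ref{thm:dir_m}, and Theorem \ref{theo:main} for the convergence of $\mu^N$).
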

The proof of Theorem \ref{theo:main} is given in Section \ref{sec:proof_theo_main}. The main arguments that allow to prove Theorem \ref {theo:main}  are collected in the following section.

\subsection{Main steps of the proof of Theorem \ref{theo:main}}
 
{\bf Step 1:} {\bf Tightness of the sequence $\mu^N$.} 
As it is classically done, we start by proving that $(\mu^N)_N$ is tight in $\cP(\R_+, \R)$.

\begin{prop}\label{thm:tightness}
Grant all assumptions of Theorem \ref{theo:main}.  Then the sequence of empirical measures $(\mu^N)_N$ of \eqref{eq:finite:system} is tight. 
\end{prop}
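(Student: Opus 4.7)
The plan is to reduce tightness of $(\mu^N)_N$ to tightness of the law of a single tagged particle and then to apply Aldous's criterion, treating the collateral jump contribution as the delicate piece.

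\textbf{Step 1: reduction via exchangeability.} Since the initial data $(X^{i}_0)_{i\ge 1}$ are i.i.d. and the coefficients $b,\psi,f$ are symmetric in the indices, the vector $(X^{N,i})_{1\le i\le N}$ is exchangeable in $D(\R_+,\R)^N$. A classical result on exchangeable arrays (see, e.g., Proposition 2.2 in Sznitman's Saint-Flour lectures, or the discussion around the directing measure in \cite{aldous}) says that for any Polish space $E$ the sequence of laws of the empirical measures of an exchangeable array is tight in $\mathcal P(\mathcal P(E))$ if and only if the sequence of laws of the first coordinate is tight in $\mathcal P(E)$. Applying this with $E=D(\R_+,\R)$, it suffices to prove that the sequence $\bigl(\mathcal L(X^{N,1})\bigr)_N$ is tight in $\mathcal P(D(\R_+,\R))$.

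\textbf{Step 2: Aldous's criterion and decomposition.} I decompose
\begin{equation*}
X^{N,1}_t = X^{1}_0 + B^{N,1}_t + J^{N,1}_t + C^{N,1}_t,
\end{equation*}
where $B^{N,1}$ is the drift, $J^{N,1}$ is the main jump integral and $C^{N,1}$ is the collateral jump integral. The drift satisfies $|B^{N,1}_{t+\delta}-B^{N,1}_t|\le \|b\|_\infty \delta$ by Assumption \ref{ass:b}\ref{itm:bbdd}, and the probability that $J^{N,1}$ jumps on an interval of length $\delta$ is at most $\|f\|_\infty \delta$ with jumps bounded by $\|\psi\|_\infty$. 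Hence both pieces are automatically tight, and the whole problem reduces to verifying the Aldous conditions for $C^{N,1}$: (i) $\sup_N \E[\sup_{t\le T}|C^{N,1}_t|^q]<\infty$ for some $q<\alpha$, and (ii) for every sequence of stopping times $\tau_N\le T$ and $\delta_N\downarrow 0$, $C^{N,1}_{\tau_N+\delta_N}-C^{N,1}_{\tau_N}\to 0$ in probability.

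\textbf{Step 3: controlling the collateral term via the time-changed random walk.} Following the representation already announced in the Introduction, I write
\begin{equation*}
C^{N,1}_t = \frac{1}{N^{1/\alpha}}\sum_{l=1}^{P^N_t} U_l,
\end{equation*}
where $(U_l)_{l\ge 1}$ are i.i.d.\ with law $\nu$ and $P^N_t$ is the counting process recording all main jumps of the $j\neq 1$ particles up to time $t$, whose stochastic intensity is bounded by $(N-1)\|f\|_\infty$. Thus $P^N_t$ is stochastically dominated by a Poisson process of rate $N\|f\|_\infty$. Since $\nu$ lies in the domain of attraction of an $\alpha$-stable law, for any $q\in (0,\alpha)$ one has the stable-type estimate
\begin{equation*}
\E\Bigl[\Bigl|\frac{1}{n^{1/\alpha}}\sum_{l=1}^n U_l\Bigr|^q\Bigr]\le K_q,\qquad n\ge 1,
\end{equation*}
(obtained by combining the stable CLT with uniform integrability of a slightly higher fractional moment; in the case $\alpha\in (1,2)$ this uses the centering of $\nu$, while for $\alpha<1$ it follows from subadditivity of $x\mapsto x^q$). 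Conditioning on $P^N$ and inserting $n=P^N_t$ yields
\begin{equation*}
\E\bigl[|C^{N,1}_{t+\delta}-C^{N,1}_t|^q\bigr]\le K_q\, N^{-q/\alpha}\,\E\bigl[(P^N_{t+\delta}-P^N_t)^{q/\alpha}\bigr]\le K'_q\,\delta^{q/\alpha},
\end{equation*}
uniformly in $N$ and in the choice of (bounded) stopping times, using that $P^N_{t+\delta}-P^N_t$ is stochastically dominated by a Poisson with parameter $N\|f\|_\infty\delta$. Taking $\delta=\delta_N\downarrow 0$ gives the Aldous oscillation condition, and taking $\delta=T$ together with Assumption \ref{ass:nu0} gives the uniform moment bound.

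\textbf{Main obstacle.} The delicate point is precisely the moment bound on $|n^{-1/\alpha}\sum_l U_l|^q$: heavy tails of $\nu$ force us to work with $q<\alpha$ and preclude the use of the standard Marcinkiewicz–Zygmund or Burkholder–Davis–Gundy inequalities for $q\ge \alpha$. Extracting from the stable CLT a uniform-in-$n$ control (not just a weak limit) is the core technical step; once that is available the rest of the argument is a routine application of Aldous's criterion followed by the exchangeability reduction of Step 1.
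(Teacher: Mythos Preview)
Your reduction via exchangeability (Step 1) and the treatment of the drift and main-jump pieces (Step 2) match the paper. The gap is in Step 3, at the line ``conditioning on $P^N$ and inserting $n=P^N_t$.'' This step implicitly assumes that, given the counting process $P^N$, the jump heights $(U_l)_l$ remain i.i.d.\ with law $\nu$; equivalently, that $P^N$ is independent of the sequence $(U_l)_l$. That is false here: the intensity of $P^N$ at time $s$ is $\sum_{j\neq 1} f(X^{N,j}_{s-})$, and each $X^{N,j}_{s-}$ depends on all collateral jump heights $U_l$ that occurred before $s$. Conditioning on $P^N$ therefore biases the $U_l$'s, and the displayed inequality $\E[|C^{N,1}_{t+\delta}-C^{N,1}_t|^q]\le K_q N^{-q/\alpha}\E[(P^N_{t+\delta}-P^N_t)^{q/\alpha}]$ is unjustified (the problem is even worse at stopping times $\tau_N$, which may themselves depend on the $U_l$'s). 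Falling back on subadditivity ($q<1$) or von Bahr--Esseen ($1\le q<2$) does not help either: both give a bound of order $N^{1-q/\alpha}\delta$, which blows up in $N$ for every $q<\alpha$. The paper says explicitly that Aldous's criterion does not seem to suffice for this term.

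The paper proceeds differently. It considers the full collateral sum $J^N_t$ (over \emph{all} $j$, including $j=i$) and shows, via the random time change by the inverse of $NA^N_t=N\int_0^t\mu^N_s(f)\,ds$, that $N^{1/\alpha}J^N_{\tau^N_\cdot}$ is a genuine rate-$1$ compound Poisson process; this is precisely where independence of $(U_l)_l$ from the counting process is legitimately recovered (the argument uses the marked-point-process intensity characterization, cf.\ Br\'emaud). Then $J^N=S^N\circ\tilde P_{N\cdot}\circ A^N$, the composition $S^N\circ\tilde P_{N\cdot}$ converges in law to $S^\alpha$ by the stable CLT, and tightness of $J^N$ follows from continuity of composition with the equi-Lipschitz time changes $A^N\in\Lambda^{\underline f,\|f\|_\infty}$. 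Aldous's criterion is applied only to the residual one-particle term $E^{N,i}_t=N^{-1/\alpha}\int u\,\indiq_{\{z\le f(X^{N,i}_{s-})\}}\pi^i(ds,dz,du)$, where the prefactor $N^{-1/\alpha}$ with a \emph{single} summand makes the crude subadditivity bound vanish.
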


The proof of Proposition \ref{thm:tightness} is given in Section \ref{sec:tightnes_muN}. We give below the idea of this proof. 
%
Contrary to more standard situations, proving tightness is one of the subtle steps of our proof. In particular, because of the normalization $N^{-1/\alpha}$ in front of the term in the second line of \eqref{eq:finite:system} and the fact that $\nu$ possesses only moments of order less than $\alpha,$   it does not seem possible to rely only on classical tightness criteria such as Aldous' criterion. To prove the tightness, the main observation is that the sum of the collateral jumps 
\begin{equation}\label{eq:rw_collj}
J_t^N \coloneqq \frac{1}{{N}^{ 1/\alpha} }\sum_{ j =1}^N \int_{[0,t]\times\R_+\times\R  } u \indiq_{ \{ z \le  f ( X^{N, j}_{s-}) \}} \pi^j (ds,dz,du), t \geq 0, 
\end{equation}
is a compound Poisson process observed after a random time change, where the time change is given by the integrated jump intensity of the whole system. Since the jump heights of the compound Poisson process (the $u-$variables in the above integral) belong, by assumption, to the domain of attraction of a stable law, the stable central limit theorem then allows to show that the sequence $ (J^N)_N$ is tight in $ D (\R_+ , \R ) , $ and the tightness of the sequence $ (\mu^N)_N$ follows easily from this. 
\\

\noindent{\bf Step 2: }{\bf  Representation result.} The next important step is the following representation result for the limit system obtained along a given converging subsequence.\ec
\begin{thm}\label{theo:main-repr} 	
Grant the assumptions of Theorem \ref{theo:main} and let $\mu$ be the limit in law along a fixed convergent subsequence of the sequence $(\mu^N)_N$ of empirical measures of system \eqref{eq:finite:system}. Moreover, let $(X^{i})_i$ be an infinite-exchangeable system directed by $\mu$. Then there exist, on an extension of the probability space where $ \mu $ and $(X^{i})_i$ are defined, a strictly $\alpha$-stable process $ S^\alpha, $ independent of the $ (X^i_0)_{ i \geq 1 }, $ and a family of independent Poisson random measures $(\bar\pi^{i})_i,$ independent of $S^\alpha $ and of  the $ (X^i_0)_{ i \geq 1 },$ such that we have the representation 
		\begin{equation}\label{eq:limit:system_subseq}
    		\begin{split}
    			X^i_t & = X^i_0 + \int_0^t b(X^i_s, \mu_s) ds + \int_{[0,t]\times \R_+} \psi(X^{i}_{s^-}, \mu_{s^-}) \indiq_{\{ z \leq f(X^{i}_{s^-})			\}} \bar\pi^i(ds,dz)  \\
    			& + \int_{[0,t]} \mu^{1/\alpha}_{s^-}(f) d S^\alpha_s , \qquad i\geq 1 ,
    		\end{split}
    	\end{equation}
	for all $t>0$. 
\end{thm}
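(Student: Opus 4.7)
The plan is to start from a fixed subsequence along which $\mu^{N_k}\Rightarrow \mu$, then to upgrade this to joint convergence of the full vector $\bigl(\mu^{N_k},(X^{N_k,i})_{i\ge 1},(\bar\pi^i)_{i\ge 1},J^{N_k}\bigr)$, and finally to pass to the limit in equation \eqref{eq:finite:system} term by term. By exchangeability of the finite system and the tightness of $(\mu^N)_N$ given by Proposition \ref{thm:tightness}, each marginal $(X^{N,i})_{i\ge 1}$ is tight in $D(\R_+,\R)^{\N^*}$ (Aldous' criterion applies componentwise thanks to the bounded drift $b$, the bounded jump rate $f$ and the bounded main jump height $\psi$, while the contribution of the collateral jumps to each single coordinate is controlled exactly as for $J^N$ in Step 1 of the proof of Proposition \ref{thm:tightness}). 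The Poisson measures $(\bar\pi^i)_i$ are constant along $N$, so tightness is for free. For $J^{N_k}$ I would invoke Proposition \ref{prop:convergence_J} (the core convergence result announced in the introduction) to get that $J^{N_k}\Rightarrow\int_0^\cdot \mu_{s-}^{1/\alpha}(f)\,dS^\alpha_s$ jointly with $\mu^{N_k}$. Extracting a further subsequence, I invoke the Skorokhod representation theorem on a suitable extension of the probability space to realize the convergence almost surely.

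Next I would pass to the limit in \eqref{eq:finite:system} coordinate by coordinate. The deterministic drift term converges by Assumption \ref{ass:weak_cont}(a) together with continuity of the evaluation maps on $D(\R_+,\R)$ (outside the at-most countably many discontinuities of $X^i$ and of $s\mapsto\mu_s$), combined with the uniform bound from Assumption \ref{ass:b}(a) and dominated convergence. The main-jump integral against $\bar\pi^i$ is handled in the same spirit: since $\bar\pi^i$ does not depend on $N$, I use the standard continuity result (see Appendix and \cite{JS} VI.6 / IX.3) for stochastic integrals with respect to a fixed Poisson random measure driven by integrands of the form $\psi(X^{N_k,i}_{s-},\mu^{N_k}_{s-})\indiq_{\{z\le f(X^{N_k,i}_{s-})\}}$, which converge a.s. in Skorokhod sense by Assumption \ref{ass:f} and Assumption \ref{ass:weak_cont}. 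This step splits as in the organisation announced in the introduction: for $\alpha<1$ the term $\psi$ is present and the regularity of $d_{\alpha_-}$ is used; for $\alpha>1$ the term is simply absent.

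The delicate step, and the real obstacle, is to identify the third term as $\int_0^t \mu_{s-}^{1/\alpha}(f)\,dS^\alpha_s$ with one and the same stable process $S^\alpha$, \emph{independent} of $(\bar\pi^i)_i$ and of $(X^i_0)_{i\ge 1}$, appearing in every coordinate. I would rely on the representation
\[
J^N_t \;=\; \Bigl(\tfrac{1}{N^{1/\alpha}}\sum_{l=1}^{\tilde P_{N\cdot}} U_l\Bigr)\circ \int_0^t\mu^N_s(f)\,ds
\]
recalled in the introduction, apply the stable central limit theorem to the compound Poisson pre-time-change, and combine it with the continuous mapping theorem for the time change (using that $s\mapsto \mu_s(f)$ is c\`adl\`ag by Lemma \ref{lem:mut} and uniformly bounded away from $0$ and $\infty$ by Assumption \ref{ass:f}). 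The candidate $S^\alpha$ is then extracted as the almost sure Skorokhod limit of the unwarped random walks, and independence of $S^\alpha$ from $(\bar\pi^i,X^i_0)_i$ follows from the fact that the collateral-jump contribution to any finite number of coordinates uses only a vanishing fraction of the summands, while the law of $J^N$ is \emph{asymptotically} only sensitive to the empirical measure. To turn this heuristic into identification of the joint law, I would appeal to the semimartingale convergence theorems of \cite{JS} (Theorems IX.2.4 and VIII.2.17): I compute the predictable characteristics of the $\R^{k+1}$-valued semimartingale $(X^{N,1},\dots,X^{N,k},J^N)$ with respect to a common truncation function $h\in C_1(\R^{k+1})$, show that they converge (in probability, uniformly on compacts) to the deterministic characteristics of the candidate limit, and use the fact that the limiting driving Poisson measures $\bar\pi^i$ and the stable process $S^\alpha$ appear only through mutually singular parts of the L\'evy--It\^o decomposition to conclude independence together with the announced joint representation.

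Finally, the law of $(X^i)_i$ produced this way is exchangeable and directed by $\mu$ by De Finetti's theorem applied in the subsequential limit, and the uniqueness in law of $(\mu,(X^i)_i)$ given the initial data, the $\bar\pi^i$'s and $S^\alpha$ (which will be used in Section \ref{sec:uniqueness}) is not needed here; only the representation \eqref{eq:limit:system_subseq} is. The main obstacle, as emphasized above, is the joint identification step: the construction must produce the same $S^\alpha$ in every coordinate, with the correct independence properties, which is exactly what the characteristics-based argument from \cite{JS} delivers.
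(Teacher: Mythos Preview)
Your plan correctly identifies the crux — a single $S^\alpha$ common to all coordinates, with the right independence — and correctly points to the characteristics machinery of \cite{JS} IX.2.4. But the execution has a structural gap in how the driving noises of the limit are produced. You propose to carry the Poisson measures $(\bar\pi^i)_i$ through the Skorokhod representation (``$\bar\pi^i$ does not depend on $N$'') and then pass to the limit in the main-jump integral by a continuity result for stochastic integrals with fixed integrator. This does not work as stated: the integrand $\psi(X^{N,i}_{s-},\mu^N_{s-})\indiq_{\{z\le f(X^{N,i}_{s-})\}}$ is discontinuous in the state through the indicator, so the ``standard continuity result'' you invoke does not apply directly; and more seriously, even if you push it through, nothing in this route forces the resulting $\bar\pi^i$ to be independent of one another and of $S^\alpha$. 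The phrase ``mutually singular parts of the L\'evy--It\^o decomposition'' is the right intuition but is not delivered by a term-by-term limit.

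The paper avoids all of this by reversing the logic: it never tries to carry the prelimit noises to the limit. Instead it computes the characteristics of the two-dimensional semimartingale $(X^{N,i},X^{N,k})$, uses \cite{JS} IX.2.4 to conclude that the limit $(X^i,X^k)$ is a semimartingale with the explicit limit characteristics (Skorokhod is used only here, on $(X^{N,i,k},\mu^N)$, to prove convergence of the compensators, where the indicator has already been integrated out and replaced by the smooth factor $f(X^{N,i}_s)$), and then applies the representation Theorem II.7.4 of Ikeda--Watanabe \cite{IW} to \emph{construct}, on an extension, a single Poisson random measure $N$ on an auxiliary space together with an explicit map $\theta$ such that the jump measure of $(X^i,X^k)$ is the image of $N$ under $\theta$. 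The three pieces of $\theta$ (main jump of $i$, main jump of $k$, collateral jump) are supported on disjoint slices of the auxiliary space, which immediately yields three \emph{independent} Poisson measures $\bar\pi^i,\bar\pi^k,M$, the last giving $S^\alpha$. This Ikeda--Watanabe step is the missing ingredient in your plan: the limit noises are built a posteriori from the jump structure of the limit, not transported from the prelimit.
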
	
Theorem \ref{theo:main-repr} is proven in Section \ref{sec:passage_to_lim:alpha<1} for $\alpha<1$ and in Section \ref{sec:passage_to_lim:alpha>1} for $\alpha>1.$ \\

 \noindent{\bf Step 3:} {\bf Identification result}.
 Notice that with respect to \eqref{eq:limit:system}, in the representation \eqref{eq:limit:system_subseq}, the main information that is still lacking is the precise form of the directing measure which in \eqref{eq:limit:system} is given by $ \bar \mu = {\mathcal L} (\bar X^1 | S^\alpha ).$ We complete this point in the following and last step. 
  \begin{thm}\label{theo:main-ident}
	Grant the assumptions of Theorem \ref{theo:main}. Let $\mu$ be the limit in law along a
	fixed convergent subsequence of the sequence $(\mu^N)_N$ and $(X^{i})_i$ be an 
	infinite-exchangeable system directed by $\mu$, with its
	representation given in \eqref{eq:limit:system_subseq}. Let us call  $ ( \Omega, {\mathcal A}, P ) $ the probability space on which the initial conditions  $ (X^i_0)_{ i \geq 1 }, $ the strictly $\alpha$-stable process $ S^\alpha, $ independent of the $ (X^i_0)_{ i \geq 1 }, $ and the family of independent Poisson random measures $(\bar\pi^{i})_i,$ independent of $S^\alpha $ and of  the $ (X^i_0)_{ i \geq 1 },$  appearing in \eqref{eq:limit:system_subseq} are defined. 
	    		
Consider now, on $ ( \Omega, {\mathcal A}, P ) ,$ the strong solution $ (\bar X^i)_{  i \geq 1 } $ of \eqref{eq:limit:system} driven by the same stable process $ S^\alpha $ and the same collection of Poisson random measures $ \bar \pi^i $ as $ (X^i)_i , $ and starting from the same initial conditions.  

Then we have that for all $i\geq 1$ and $t \geq 0,$ 
$ \bar X^i_t = X^i_t  \mbox{ a.s.}$ and $ \mu = \bar \mu \; a.s. $
\end{thm}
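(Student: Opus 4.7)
The plan is to prove the stronger pathwise statement $X^i_t=\bar X^i_t$ a.s.\ for all $i$ and $t$ directly, by a coupling and Gronwall argument; the identity $\mu=\bar\mu$ will then be immediate, since the directing measure is a.s.\ determined by the exchangeable sequence itself. The strategy leans entirely on the fact that $(X^i)_i$ and $(\bar X^i)_i$ are defined on the same space and driven by the same triple $(X^i_0,\bar\pi^i,S^\alpha)$, differing only in the mean-field input ($\mu_s$ for $X^i$, $\bar\mu_s$ for $\bar X^i$). For each $i$, the difference $X^i_t-\bar X^i_t$ then satisfies an SDE involving terms of the form $b(X^i_s,\mu_s)-b(\bar X^i_s,\bar\mu_s)$, $\psi(X^i_{s-},\mu_{s-})\indiq_{\{z\le f(X^i_{s-})\}}-\psi(\bar X^i_{s-},\bar\mu_{s-})\indiq_{\{z\le f(\bar X^i_{s-})\}}$ and $\bigl((\mu_{s-}(f))^{1/\alpha}-(\bar\mu_{s-}(f))^{1/\alpha}\bigr)\,dS^\alpha_s$. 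Using the Lipschitz assumptions \ref{ass:b}, \ref{ass:psi}, \ref{ass:weak_cont}, the Lipschitz continuity and the lower bound of $f$ from Assumption \ref{ass:f} (which gives Lipschitz control of $x\mapsto x^{1/\alpha}$ away from $0$), and replicating the estimates of Theorem 2.12 in \cite{LLM}, one obtains, with $d=d_{\alpha_-}$ when $\alpha<1$ and an analogous adapted metric when $1<\alpha<2$,
\[
\E\bigl[d(X^i_t,\bar X^i_t)\bigr]\;\le\;C\int_0^t\Bigl(\E\bigl[d(X^i_s,\bar X^i_s)\bigr]+\E\bigl[W_d(\mu_s,\bar\mu_s)\bigr]\Bigr)\,ds.
\]

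The key new ingredient, which closes this Gronwall loop, is the estimate $\E[W_d(\mu_s,\bar\mu_s)]\le\E[d(X^1_s,\bar X^1_s)]$. To establish it I would observe that the doubled sequence $(X^i,\bar X^i)_{i\ge 1}$ is exchangeable in $D\times D$: both coordinates are measurable functionals of the i.i.d.\ pairs $(X^i_0,\bar\pi^i)$ and the common inputs $(\mu,\bar\mu,S^\alpha)$, so any finite permutation of indices acts simultaneously on both coordinates and preserves the joint law. De Finetti's theorem then supplies a joint directing random measure $\mu^\ast$ on $D\times D$ whose two one-dimensional marginals must coincide with the individual directing measures $\mu$ and $\bar\mu$. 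The time-$s$ projection $\mu^\ast_s$ is therefore a coupling of $\mu_s$ and $\bar\mu_s$, giving
\[
W_d(\mu_s,\bar\mu_s)\;\le\;\int_{\R\times\R}d(x,y)\,\mu^\ast_s(dx,dy)\;=\;\E\bigl[d(X^1_s,\bar X^1_s)\,\big|\,\mu^\ast\bigr]\quad\text{a.s.,}
\]
and taking expectations yields the desired bound.

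Plugging this into the Gronwall estimate of the first paragraph gives $\E[d(X^1_t,\bar X^1_t)]\le C\int_0^t\E[d(X^1_s,\bar X^1_s)]\,ds$, so Gronwall's lemma forces $\E[d(X^1_t,\bar X^1_t)]=0$ for every $t$; exchangeability together with c\`adl\`ag regularity then produces $X^i=\bar X^i$ a.s.\ for all $i$, and consequently $\mu=\bar\mu$ a.s. I expect the main technical obstacle to be the derivation of the Gronwall estimate itself, rather than the identification step: one has to couple the main jumps through the same Poisson random measure $\bar\pi^i$ (the resulting indicator difference being controlled by $|f(X^i_{s-})-f(\bar X^i_{s-})|$ thanks to the rate $f$ being bounded and Lipschitz), and to dominate the stable-integral contribution, which in the case $\alpha<1$ is driven by the raw jump measure $M$ and must be handled in the metric $d_{\alpha_-}$ via the moment Assumption \ref{ass:nu0}. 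All of these delicate estimates are, however, already in \cite{LLM}, so the present argument essentially reduces the identification question to the coupling bound of the second paragraph.
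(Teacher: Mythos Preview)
Your strategy is correct and takes a genuinely different route from the paper. Both arguments close a Gronwall loop for $\E[\,\cdot\,|X^i_t-\bar X^i_t|\,]$ (in the appropriate metric), and both hinge on bounding the mean-field discrepancy $W_d(\mu_s,\bar\mu_s)$ back by the particle discrepancy $\E[d(X^1_s,\bar X^1_s)]$. The paper does this indirectly: it inserts the empirical measures $\mu^{N,X}_s,\mu^{N,\bar X}_s$ of the first $N$ coordinates of $(X^i)_i$ and $(\bar X^i)_i$, bounds $W(\mu_s,\mu^{N,X}_s)$ and $W(\bar\mu_s,\mu^{N,\bar X}_s)$ via Fournier--Guillin type rates (Lemma 5.7 of \cite{LLM}), and controls the middle piece $W(\mu^{N,X}_s,\mu^{N,\bar X}_s)$ by the diagonal coupling $\frac1N\sum_j\delta_{(X^j_s,\bar X^j_s)}$; one then lets $N\to\infty$ after Gronwall. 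Your de Finetti coupling via the joint directing measure $\mu^\ast$ of $((X^i,\bar X^i))_i$ short-circuits all of this and yields $\E[W_d(\mu_s,\bar\mu_s)]\le\E[d(X^1_s,\bar X^1_s)]$ in one line. This is cleaner and avoids importing the quantitative LLN machinery of \cite{fournierguillin}; the paper's detour, on the other hand, keeps every estimate squarely inside the lemmas already proved in \cite{LLM}.

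Two remarks on execution. First, the Gronwall bound you state will not hold as written because the stable integral $\int(\mu_{s-}^{1/\alpha}(f)-\bar\mu_{s-}^{1/\alpha}(f))\,dS^\alpha_s$ has no finite moments of the required order unless one truncates the large jumps of $S^\alpha$. The paper (following \cite{LLM}, Lemmas 5.1--5.3) introduces $T_K=\inf\{t:\Delta S^\alpha_t\ge K\}$ and runs the whole argument on $\{t<T_K\}$, letting $K\to\infty$ at the end. Your de Finetti coupling is compatible with this localisation because $T_K$ is $\sigma(S^\alpha)$-measurable, hence measurable with respect to $\sigma(\mu^\ast)$ (the common jumps of the $\bar X^i$'s encode $S^\alpha$), so $\indiq_{\{s<T_K\}}$ can be pulled inside the conditional expectation. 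Second, your justification of the joint exchangeability of $((X^i,\bar X^i))_i$ is slightly quick: $\mu$ is \emph{not} independent of the individual noises $(X^i_0,\bar\pi^i)$, so it is not a ``common input'' in the usual sense. The correct observation is that $\mu$, being the directing measure of $(X^i)_i$, is measurable with respect to the exchangeable (hence permutation-invariant) $\sigma$-algebra; permuting indices therefore fixes $\mu$ pointwise, and the joint law follows. The paper uses the same fact implicitly when it writes $\E[\frac1N\sum_j|X^j_s-\bar X^j_s|^{\alpha_+}\indiq_{\{s<T_K\}}]=\E[|X^i_s-\bar X^i_s|^{\alpha_+}\indiq_{\{s<T_K\}}]$.
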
 
Theorem \ref{theo:main-ident} is proven in Section \ref{sec:uniqueness}. 

\section{Tightness of $(\mu^N)_N$}\label{sec:tightnes_muN}
In this section, we will 
show tightness of the sequence of empirical measures $ (\mu^N)_N,$ i.e.
  tightness of the sequence of their laws on $\mathcal{P}(D ( \R_+, \R)) $. This is the result of   Proposition \ref{thm:tightness}, which will be proven at the end of the section. The main ingredient to prove this is the following result.

\begin{prop}\label{prop:tightness_J}
Grant all assumptions of Theorem \ref{theo:main}. Recall the definition of $ J_t^N$ in \eqref{eq:rw_collj} above. Then the sequence $((J^N_t)_{ t \geq 0 })_N$ is tight in $ D ( \R_+, \R ).$ 
\end{prop}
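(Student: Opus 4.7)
My plan is to represent $J^N$ as a time-changed compound Poisson process and then combine the functional stable central limit theorem with the uniform Lipschitz regularity of the time change. The key identity in law reads
\[
J^N_t = L^N\bigl(\tilde A^N_t\bigr), \qquad \tilde A^N_t := \int_0^t \mu^N_s(f)\, ds,
\]
where
\[
L^N(s) := \frac{1}{N^{1/\alpha}} \sum_{k=1}^{\eta^N(Ns)} U^N_k
\]
is built from a unit-rate Poisson process $\eta^N$ independent of an i.i.d. $\nu$-distributed sequence $(U^N_k)_k$. This representation follows by enumerating the jumps of the aggregated counting process $N^N_t := \sum_{j=1}^N \pi^j([0,t] \times [0,f(X^{N,j}_{s-})] \times \R)$: its compensator equals $\int_0^t N\mu^N_s(f)\,ds = N\tilde A^N_t$, so Watanabe's theorem yields $N^N_t = \eta^N(N \tilde A^N_t)$ on a possibly enlarged space; and by the product structure of the $\pi^j$, the $u$-marks attached at each jump epoch are i.i.d. $\sim \nu$ and independent of the jump times.

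Once this representation is in hand, I would treat the two factors separately. For $L^N$, Assumption \ref{ass:nu} together with the functional version of the stable central limit theorem (see, e.g., Theorem VII.3.7 of \cite{JS}) gives $L^N \convlaw S^\alpha$ in $D(\R_+,\R)$, in particular tightness of $(L^N)_N$. For $\tilde A^N$, the uniform two-sided bound $\underline f \le \mu^N_s(f) \le \|f\|_\infty$ coming from Assumption \ref{ass:f} shows that $\tilde A^N \in \Lambda^{\underline f, \|f\|_\infty}$ for every $N$, so by Arzel\`a-Ascoli the family $(\tilde A^N)_N$ is relatively compact, hence tight, in $C(\R_+, \R)$.

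I would then conclude by a composition argument. Tightness of the marginals yields tightness of the pair $(L^N, \tilde A^N)_N$ in $D(\R_+,\R) \times \Lambda^{\underline f, \|f\|_\infty}$; and on this product the composition map $(l, a) \mapsto l \circ a$ is continuous, since any limit point of $\tilde A^N$ is a continuous, strictly increasing function (this is exactly where $\underline f > 0$ is used). Applying the continuous mapping theorem along any weakly convergent subsequence gives tightness of $J^N = L^N \circ \tilde A^N$ in $D(\R_+,\R)$.

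The hard part will be the rigorous justification of the time-change representation: the process $\eta^N$ is obtained as the inverse time change of $N^N$, and one must verify that its increments remain independent of the marks $(U^N_k)_k$. This requires a careful application of Watanabe's theorem in the marked-point-process version, or alternatively a direct identification of the joint law of jump times and marks via characteristic functionals, which bypasses any enlargement of the probability space. Everything else (the stable CLT for $L^N$, the uniform Lipschitz control of $\tilde A^N$, and the composition step) is standard once the representation has been established.
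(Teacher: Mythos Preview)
Your proposal is correct and follows essentially the same route as the paper: the paper also writes $J^N = S^N \circ \tilde P_{N\cdot} \circ A^N$ (your $L^N \circ \tilde A^N$), proves the time-change representation by computing the intensity of the time-changed jump measure directly (this is your ``hard part'', handled via the marked point process formalism of Br\'emaud rather than by invoking Watanabe's theorem by name; note that no space enlargement is needed since $N A^N$ is continuous and strictly increasing), applies a functional stable CLT to $L^N$, and concludes via continuity of the composition map on $D \times \Lambda^{\underline f, \|f\|_\infty}$.
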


\begin{proof}
We define the cumulated jump intensity -- up to a factor $N$ -- of system \eqref{eq:finite:system} by
\begin{equation*}
	A^N_t \coloneqq  \int_0^t  \mu^N_{s}(f)  ds .
\end{equation*}
Notice that $A^N_0 =0$, $t \mapsto A^N_t$ is strictly increasing (since $f$ is strictly positive) and continuous, hence predictable, that $A^N_\infty = \infty$ and that $\underline{f} (t-s) \leq A^N_t - A^N_s \leq \norme{f}_\infty (t-s)$ for each $0 \le s<t < \infty . $ So $A^N$ is a random variable with values in $\Lambda^{ \underline{f}, \norme{f}_\infty }.$ 
Define now the process $(\tau^N_t)_{t\geq 0}$ by 
\[
	\int_{0}^{\tau^N_t} N \mu^N_{s}(f)   ds =  t , 
\]
that is, 
\[
	N A^N_{\tau^N_t} = t.
\]  

\noindent{\bf Step 1.} Here we show that 
\[
(\tilde J_t)_t \coloneqq (N^{ 1/ \alpha}  J^N_{\tau^N_t})_t
\]
is a compound Poisson process with unit rate and jump distribution $ \nu.$ 

To prove this fact, let $ \bar N ( dt, du) $ be the jump measure associated to the process $ N^{ 1/ \alpha} J^N, $ that is,
$$ \bar N ( A \times B ) = \sum_{ t \geq 0} \indiq_{\{ t \in A \} } \indiq_{ \{ N^{ 1/ \alpha}\Delta J_t^N \in B \} } , $$
for any $ A \subset \R_+ $ and $ B \subset \R \setminus \{ 0\} .$ 

$ \bar N $ is a marked point process in the sense of Definition 5.1.22 of \cite{bremaud}. Its intensity measure is given by 
$$  \sum_{ i = 1 }^N f( X_{t}^{N, i } ) dt \nu ( du) = N \mu^N_t (f) dt \nu (du )  .$$
In particular, in the sense of Definition 5.1.23 of \cite{bremaud}, the intensity kernel $ \lambda (t, du ) $ associated to $ \bar N $ is given by 
$$ \lambda ( t, du) = N \mu^N_t(f)   \nu( du), $$ 
such that, with the notation of page 174 of \cite{bremaud}, $ \Phi( t, du ) = \nu (du) $ does not depend on $t. $ 

Let now $ \tilde N (dr, dz) $ be the jump measure associated to the time changed process $ \tilde J_t^N = N^{ 1/ \alpha}J^N_{\tau^N_t} .$ We check that its intensity measure is given by $ dr \nu (dz) . $ Indeed, to see this, let $0 \le  a < b $ and $ B \subset  \R\setminus \{ 0\} .$ Notice that  $ \tau_a^N < \tau_b^N $ are both stopping times. Let moreover $ A \in \mathcal F_{\tau^N_a} . $ The process $ (t, u)  \mapsto \indiq_A \indiq_{\{ \tau_a < t \le \tau_b \}} \indiq_{\{ u \in B \}} $ is predictable, so that, using the explicit form of the intensity of $ \bar N, $ we have
\begin{multline*} 
\E [\tilde N ( ]a, b ] \times B ] \indiq_A ] = \E \left[ \indiq_A \int_{ ]\tau^N_a, \tau^N_b ] } \int_{\R} \indiq_B (u ) \bar N ( dt, du ) \right] \\
= \E \left[ \indiq_A \int_{ ]\tau^N_a, \tau^N_b ] } \int_{\R} \indiq_B (u )  N \mu^N_t (f)  dt \nu (du )  \right] \\
= \nu (B)  \E \left[ \indiq_A \int_{ ]\tau^N_a, \tau^N_b ] }  N \mu^N_t ( f)  dt \right] \\
= \nu( B) \E [ \indiq_A N [ A^N_{\tau_b^N} - A^N_{\tau_a^N} ] ] = \nu (B) (b-a)  \P(A) ,
\end{multline*}
which concludes the proof.

As a consequence of the above result, $\tilde N( dr, dz ) = \sum_{n } \delta_{(\tilde S_n, U_n) } $ is a Poisson random measure, with intensity $ dr \nu ( dz) $ of product form, which implies the independence of the sequences $ (\tilde S_n)_n$ and $ (U_n)_n.$ In particular, 
$$ \tilde J_t^N = \int_{[0,t ] } \int_\R z \tilde N (dr, dz ) = \sum_n  U_n \indiq_{ \{ \tilde S_n \le t \} }  = \sum_{n = 1 }^{ \tilde P_t} U_n, $$
if we put $\tilde P_t =  \sum_n   \indiq_{ \{ \tilde S_n \le t \} } ,$ a Poisson process of rate $1,$ independent of $ (U_n)_n.$ This concludes the first step. \\

\noindent{\bf Step 2.} Using the above notation, let us introduce the rescaled random walk
\[
	S^N_k \coloneqq \frac{1}{N^{1/\alpha}} \sum_{l=1}^{k} U_l , \qquad k \geq 0 .
\]
As a consequence of Step 1, we have the representation
\begin{equation} \label{eq:J^N_composition} 
J^N 	= S^N \circ \tilde{P}_{N\cdot} \circ A^N  , 
\end{equation}
that is, $J^{N}_t = S^N_{\tilde P_{N A^N_t}} $ for all $ t \geq 0, $ 
where the unit rate Poisson process $\tilde P$ is independent of the random walk $ S^N.$ 
Since $ \nu$ belongs to the domain of attraction of a strictly stable law, with renormalization sequence $ b_n = n^{ 1 / \alpha}, $ Corollary 16.17 in \cite{kallenberg} implies convergence in law of 
\begin{equation}\label{eq:process_converging_to_Salpha}
   ( S^N_{\tilde P_{N t }})_t  \to  (S_t^\alpha )_t ,
   \end{equation} 
with respect to the topology of locally uniform convergence. In particular, the sequence $(S^N_{\tilde P_{N \cdot}})_N = ( S^N \circ \tilde{P}_{N\cdot} )_N $ is tight in $D ( \R_+, \R).$  \\

\noindent{\bf Step 3.}  
We conclude our proof by showing that the above implies that the sequence  $(J^{N}_\cdot )_N = ( S^N \circ \tilde{P}_{N\cdot} \circ A^N_\cdot)_N$ is tight. Indeed, the sequence $(S^N_{\tilde P_{N \cdot}} , A^N_\cdot)_N$ of processes with values in $D (\R_+, \R ) \times \Lambda^{ \underline{f}, \norme{f}_\infty }$ is tight since its components are:  for $(A^N )_N$, this follows from Theorem 7.3 of \cite{billingsley}. Moreover, the composition map $\Phi : D ( \R_+, \R )  \times\Lambda^{ \underline{f}, \norme{f}_\infty } \to D, \, (\gamma, \eta) \mapsto \gamma \circ \eta$ is continuous (see Lemma \ref{lem:continuity_of_composition}). Since tightness is preserved by continuous mappings, $(J^{N}_\cdot )_N$ is tight. 
\end{proof}

%
 Using the previous result, we can now give the proof of  Proposition \ref{thm:tightness}.  
\begin{proof}[Proof of Proposition \ref{thm:tightness}]
Thanks to Proposition 2.2 in \cite{sznitman}, to prove Proposition \ref{thm:tightness} we can equivalently show that, for any fixed $i \in \{1,\ldots, N\}$, the sequence of processes $(X^{N,i})_N = ( (X^{N,i}_t)_t)_N$ is tight. Let us write
\begin{equation}\label{eq:decxni}
	X^{N,i}_t = X^{N,i}_0 + B^{N,i}_t + I^{N,i}_t + J^{N}_t  - E^{N,i}_t ,
\end{equation}
where
\[
	B^{N,i}_t \coloneqq  \int_0^t b(X^{N,i}_s, \mu^N_s) ds , 
\]
\begin{equation*}
	E^{N,i}_t \coloneqq \frac{1}{N^{1/\alpha}}\int_{[0,t]\times \R_+\times \R} u \indiq_{\{ z \leq f(X^{N,i}_{s^-})\}} \pi^{i}(ds,dz,du) 
\end{equation*}
and 
\begin{equation*}
	I^{N,i}_t \coloneqq  \int_{[0,t] \times \R_+} \psi(X^{N,i}_{s^-}, \mu^N_{s^-}) \indiq_{\{ z \leq f(X^{N,i}_{s^-})\} } \bar \pi^{i}(ds,dz)  ,
\end{equation*}
with $\psi \equiv 0$ in the case $\alpha>1. $ Recall that $J^{N}_t $ was defined in \eqref{eq:rw_collj}.

Therefore, to prove tightness of the sequence $(X^{N,i})_N$, it suffices to prove tightness of the sequences $(B^{N,i})_N$, $(I^{N,i})_N$, $(J^{N})_N$ and $(E^{N,i})_N$  separately. 
By the boundedness assumptions on $b$, $f$ and $\psi$, $(B^{N,i})_N$ and $(I^{N,i})_N$ are easily shown to be tight. Tightness of $(J^N)_N$ was proved in Proposition \ref{prop:tightness_J}. So we only need to show that  $(E^{N,i})_N$ is tight.

Using Aldous' criterion (see  Theorem VI.4.5 in \cite{JS}), this reduces to showing that: 
\begin{enumerate}
	\item for all $ M\in \N^\ast, \, \epsilon>0$ there exist $N_0\in\N^\ast,\, K > 0 $ such that
	\[ 
		N \geq N_0 \implies \P \left(\sup_{t\leq M} |E^{N,i}_t| > K  \right) \leq \epsilon ;
	\]
	\item for all $ M\in \N^\ast, \, \epsilon>0$, 
	\[
		\lim_{\theta \downarrow 0} \limsup_{N} \sup_{ \{ S,T  \, : \, S \leq T \leq S + \theta \} } \P \left( | E^{N,i}_T - E^{N,i}_S | \geq \epsilon \right) =  0 ,
	\]
	where the supremum is taken over all stopping times that are bounded by $M.$ 
	\end{enumerate}
In what follows, we fix some $\am<\alpha$ in the case  $\alpha<1,$ and we take $ \am = 1 $ if $ \alpha > 1.$ Using that the integral with respect to $\pi^{i}$ defining $ E^{N, i } $ is a.s. a sum with a finite number of terms (by boundedness of $f$), by sub-additivity,

\begin{align*}
	 |E^{N,i}_t |^{\am } & \leq 
	\frac{1}{N^{{\am }/\alpha}} \int_{[0,t]\times \R_+ \times \R} |u|^{\am } \indiq_{\{ z \leq f(X^{N,i}_{s^-})\}} \pi^{i}(ds,dz,du) \\
	& \leq \frac{1}{N^{{\am }/\alpha}}\int_{[0,t]\times \R_+ \times \R} |u|^{\am }  \indiq_{\{ z \leq \norme{f}_\infty\}} \pi^{i}(ds,dz,du).
\end{align*}
Thus, by Markov's inequality,
\begin{align*}
	\P\left(\sup_{t\leq M} |E^{N,i}_t | \geq K\right) & = \P \left( \sup_{t\leq M} |E^{N,i}_t |^{\am }\geq K^{\am } \right) \\
	& \leq 
	 \frac{1}{N^{\am /\alpha} K^{\am}} \E \left[\sup_{t\leq M}\int_{[0,t]\times \R_+ \times \R^\ast} |u|^{\am } \indiq_{\{ z \leq \norme{f}_\infty\}} \pi^{i}(ds,dz,du) \right] \\
	 & \leq  \frac{1}{N^{\am /\alpha} K^{\am }} \E \left[\int_{[0,M]\times \R_+ \times \R^\ast} |u|^{\am} \indiq_{\{ z \leq \norme{f}_\infty\}} ds dz \nu(du) \right] \\
	 & \leq C \frac{M}{N^{\am /\alpha} K^{\am }} ,
\end{align*}
for some positive constant $C$, since $\int_{\R^\ast} |u|^{\am }\nu(du) < +\infty$, so the first point is proved for any $\alpha\in (0,2)\setminus \{1\}$. 

Then, take two stopping times such that  $S\leq T \leq S+\theta \le M,$ for some $ \theta > 0.$ We obtain similarly the upper bound   
\begin{equation*}
	\P\left( |E^{N,i}_T - E^{N,i}_S  | \geq \epsilon \right) \leq C \frac{\E ( | T - S|)}{N^{\am/\alpha} \epsilon^{\am }} , 
\end{equation*}
such that
\begin{equation*}
	 \sup_{ \{ S,T  \, : \, S \leq T \leq S + \theta \le M  \} } \P \left( |E^{N,i}_T - E^{N,i}_S  | \geq \epsilon \right) \leq C \frac{\theta}{N^{\am /\alpha} \epsilon^{\am }} ,
\end{equation*}
and the second point is also checked to hold.
\end{proof}

\section{Proof of Theorem \ref{theo:main-repr}}\label{sec:5}
Thanks to Proposition \ref{thm:tightness}, $(\mu^N)_N$ admits at least a convergent subsequence. So let us take one such converging subsequence, 
still denoted (for simplicity) by $(\mu^N)_N,$ and let  $\mu$ be its limit in law. $ \mu $ is thus a random element of $\mathcal{P}(D( \R_+, \R))$. To prove Theorem \ref{theo:main-repr}, we need to characterize further this limit $ \mu, $ and this is done in this section.

\subsection{A common convergence result}\label{sec:common_convergence_results}
In this section we state a first consequence of the convergence of $ \mu^N $ to $ \mu$  that will be needed in both cases $\alpha<1$ and $\alpha>1$, namely, the joint convergence of  $((X^{N,i})_{ i= 1}^m, \mu^N)_N$, for any $ m \geq 1,$ to its corresponding limit quantity.   

More precisely, consider the infinite-exchangeable system $ X = (X^i)_{i\geq 1}$ having $\mu$ as its directing measure. \ec
By Proposition (7.20) of \cite{aldous}, the convergence in law  $(\mu^N)_N \convlaw\mu$ implies convergence in law $(X^{N,i})_{i=1}^N \convlaw (X^i)_{i\geq 1}$, that is, for any $m\geq 1$, 
\[	
	(X^{N,1}, \dots, X^{N, m}) \convlaw (X^{1},\dots, X^{m}) 
\]
(see Eq. (7.19) in \cite{aldous}). 

The following result follows then immediately by exchangeability, see also Proposition (7.20) of \cite{aldous}. 

\begin{prop}\label{prop:convergence_couple}
Let $\mu$ the limit in distribution of a subsequence of $(\mu^N)_N$, still denoted by $(\mu^N)_N$. Let $X = (X^i)_i$ be the infinite-exchangeable system directed by $\mu$. Then, for all $m\geq 1$, for this subsequence, 
\[
	( (X^{N,i})_{i=1}^m, \mu^N) \convlaw ( (X^i)_{i=1}^m, \mu) .
\] 
\end{prop}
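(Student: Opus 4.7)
The plan is to combine the finite exchangeability of $(X^{N,i})_{i=1}^N$ with the assumed convergence $\mu^N \convlaw \mu$ in order to identify the joint limit, following essentially the argument of Aldous' Proposition (7.20).

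First I would verify that the finite system $(X^{N,i})_{i=1}^N$ is exchangeable. This is a consequence of the symmetric form of \eqref{eq:finite:system}: the drift $b$, the main-jump amplitude $\psi$ and the rate $f$ depend on the particle label only through the empirical measure $\mu^N_s$, which is itself permutation-invariant, while the initial data $(X^i_0)_{i\geq 1}$ and the driving Poisson measures $(\pi^i)_i$ are i.i.d. Strong pathwise uniqueness for \eqref{eq:finite:system} (Theorem 2.12 in \cite{LLM}) then transfers permutation symmetry from the driving data to the solution.

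Second, for any bounded continuous test functions $g \colon D(\R_+, \R)^m \to \R$ and $h \colon \mathcal{P}(D(\R_+, \R)) \to \R$, I would use the finite-exchangeability identity
\[
\E\bigl[h(\mu^N) g(X^{N,1}, \ldots, X^{N,m})\bigr]
= \E\biggl[ h(\mu^N) \, \frac{1}{N^{\underline m}}\!\!\sum_{\substack{i_1, \ldots, i_m \\ \text{distinct}}} g(X^{N,i_1}, \ldots, X^{N,i_m}) \biggr],
\]
where $N^{\underline m} = N(N-1)\cdots(N-m+1)$. Comparing the distinct-index sum with the full tensor sum defining $\langle (\mu^N)^{\otimes m}, g\rangle$ and using that $g$ is bounded, the right-hand side differs from $\E\bigl[h(\mu^N) \langle (\mu^N)^{\otimes m}, g\rangle\bigr]$ by an error of order $1/N$.

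Finally, since the map $\mu \mapsto \mu^{\otimes m}$ is continuous from $\mathcal{P}(D(\R_+, \R))$ to $\mathcal{P}(D(\R_+, \R)^m)$ for the weak topology, the function $\mu \mapsto h(\mu)\langle \mu^{\otimes m}, g\rangle$ is bounded continuous. Applying $\mu^N \convlaw \mu$ yields
\[
\E\bigl[h(\mu^N) \langle (\mu^N)^{\otimes m}, g\rangle\bigr] \longrightarrow \E\bigl[h(\mu) \langle \mu^{\otimes m}, g\rangle\bigr] = \E\bigl[h(\mu) g(X^1, \ldots, X^m)\bigr],
\]
the last identity holding because $(X^i)_{i\geq 1}$ is, by definition, an infinite exchangeable sequence directed by $\mu$, hence i.i.d. with common law $\mu$ conditionally on $\mu$. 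Combined with the $O(1/N)$ estimate above, this gives $\E[h(\mu^N) g(X^{N,1}, \ldots, X^{N,m})] \to \E[h(\mu) g(X^1, \ldots, X^m)]$, which is exactly the joint convergence in distribution we want. There is no serious obstacle: the only genuinely delicate point is justifying the $O(1/N)$ approximation of the symmetrized sum by $\langle (\mu^N)^{\otimes m}, g\rangle$, which is the classical Sznitman/De Finetti estimate for finitely exchangeable vectors.
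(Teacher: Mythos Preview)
Your proposal is correct and follows essentially the same route as the paper's proof: both use exchangeability to rewrite $\E[h(\mu^N)g(X^{N,1},\ldots,X^{N,m})]$ as $\E[h(\mu^N)\langle(\mu^N)^{\otimes m},g\rangle]$ up to an error vanishing with $N$, and then pass to the limit using the continuity of $\mu\mapsto h(\mu)\langle\mu^{\otimes m},g\rangle$ and the definition of the directing measure. The only cosmetic differences are that the paper reduces first to product test functions $g_1(\gamma_1)\cdots g_m(\gamma_m)g(\mu)$ by density, and that you spell out the exchangeability of the finite system, which the paper takes for granted.
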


\begin{proof}
	We show that for any continuous bounded test function $G: D^m \times \mathcal{P}(D)$, $\lim_N \E[G( (X^{N,i})_{i=1}^m, \mu^N )] = \E[ G( (X^ i)_{i=1}^m, \mu ) ]$.
	By density arguments, we can take $G$ of the form $G( \gamma_1, \ldots, \gamma_m, p) = g_1(\gamma_1) \cdot \ldots \cdot g_m(\gamma_m) g(p)$, with $g$ continuous and bounded on $\mathcal{P}(D)$ and $g_1, \ldots, g_m$ continuous and bounded on $D$.  
	
	Suppose $m=1.$ Then by exchangeability,  
	$$ \E [ g_1 ( X^{N, 1} ) g ( \mu^N) ] = \frac1N \sum_{j=1}^N \E [ g_1 ( X^{N, j} ) g ( \mu^N) ] = \E \left[ g ( \mu^N ) \int_D g_1 ( \gamma )  \mu^N ( d \gamma )\right] .$$
	Since $ \mu \mapsto g ( \mu )  \int_D g_1 ( \gamma )  \mu ( d \gamma )$ is continuous and bounded, the above expression converges 
	to $ \E [ g ( \mu ) \int_D g_1 ( \gamma )  \mu ( d \gamma )] = \E [ g_1 ( X^{ 1} ) g ( \mu) ],$ as $ N \to \infty.$ 
	 
	This argument extends to the joint law of $m$ trajectories: for all $N\in \N$, $1\leq m \leq N$, up to an error $ r_N (m) $ converging to $ 0, $ as $N \to \infty , $ 
	\begin{multline*}
	 \E [ g_1 ( X^{N, 1} ) \ldots g_m ( X^{N, m } ) g ( \mu^N) ] = \\
	 \E \left[ g ( \mu^N ) \int_{D^m } g_1 ( \gamma_1 ) \ldots g_m ( \gamma_m)  (\mu^N )^{\otimes m } ( d \gamma_1, \ldots , d \gamma_m)\right]  + r_N (m), \end{multline*}
	such that 
	\begin{multline*}
 		\lim_{N \to \infty } \left( \E [ g_1 ( X^{N, 1} ) \ldots g_m ( X^{N, m } ) g ( \mu^N) ] \right. -\\
  		\left. \E \left[ g ( \mu^N ) \int_{D^m }  g_1 ( \gamma_1 ) \ldots g_m ( \gamma_m) (\mu^N )^{\otimes m } ( d \gamma_1, \ldots , d \gamma_m) \right] \right) = 0 .
	\end{multline*}   
	As in the case $m=1,$ the last expression is a continuous and bounded functional of $ \mu^N, $ such that the weak convergence of $\mu^N$ to $\mu$ and the fact that the limit system is directed by $\mu$ imply that 
	\begin{multline*}
		\lim_{N \to \infty }  \E \left[ g ( \mu^N ) \int_{D^m }  g_1 ( \gamma_1 ) \ldots g_m ( \gamma_m) (\mu^N )^{\otimes m } ( d \gamma_1, \ldots , d \gamma_m)\right]  \\= 
  		\E \left[ g ( \mu ) \int_{D^m } g_1 ( \gamma_1 ) \ldots g_m ( \gamma_m) \mu^{\otimes m } ( d \gamma_1, \ldots , d \gamma_m)  \right]\\
  		= \E [ g_1 ( X^{ 1} ) \ldots g_m ( X^{m } ) g ( \mu) ].
	\end{multline*}
\end{proof}

\subsection{Convergence of $J^N$}
The following result is the core of our proof. 
\begin{prop}\label{prop:convergence_J}
	Grant the assumptions of Theorem \ref{theo:main} and let $J^N$ be as in \eqref{eq:rw_collj}. Then $ ( J^N, \mu^N)_N$ is tight, and any converging subsequence possesses a weak limit of the form  $ ( J^\infty , \mu ) ,$ 
	where          
        \begin{equation*}
        	J^\infty  = S^\alpha\circ A ,
        \end{equation*}
        with $ S^\alpha$ a strictly $\alpha$-stable process and  
	\begin{equation*}
	    A_t \coloneqq \int_0^t \mu_s(f) ds , t \geq 0.
        \end{equation*}
        \end{prop}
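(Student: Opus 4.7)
The plan is to combine the representation $J^N = S^N \circ \tilde{P}_{N\cdot} \circ A^N$ from the proof of Proposition \ref{prop:tightness_J} with the convergence $S^N \circ \tilde{P}_{N\cdot} \convlaw S^\alpha$ from \eqref{eq:process_converging_to_Salpha} and a composition continuity argument. Tightness of $(J^N, \mu^N)_N$ in $D \times \mathcal{P}(D)$ is immediate from the tightness of each marginal (Propositions \ref{prop:tightness_J} and \ref{thm:tightness}). The sequence $(A^N)_N$ is tight in $C$, since each $A^N$ is Lipschitz with constant $\norme{f}_\infty$ and $A^N_0=0$, and $(S^N \circ \tilde{P}_{N\cdot})_N$ is weakly convergent. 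Hence, starting from any subsequence along which $(J^N, \mu^N) \convlaw (J^\infty, \mu)$, I first extract a further subsequence along which the enlarged vector $(J^N, \mu^N, A^N, S^N \circ \tilde{P}_{N\cdot})$ converges jointly in law. Using Skorokhod's representation theorem, I then realize these convergences almost surely on a common probability space, with limit $(J^\infty, \mu, A, \tilde{S})$.

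To identify $A$ as $\int_0^\cdot \mu_s(f)\,ds$, observe that for each fixed $t \geq 0$ the functional $\Psi_t : D \to \R$ defined by $\Psi_t(\gamma) = \int_0^t f(\gamma(s))\,ds$ is bounded and continuous on $D$: boundedness is clear, while continuity follows from dominated convergence, using that Skorokhod convergence $\gamma^N \to \gamma$ implies $\gamma^N(s) \to \gamma(s)$ at every continuity point of $\gamma$, hence Lebesgue-a.e. Since $A^N_t = \int_D \Psi_t(\gamma)\,\mu^N(d\gamma)$, the weak convergence of $\mu^N$ yields $A^N_t \to \int_D \Psi_t(\gamma)\,\mu(d\gamma) = \int_0^t \mu_s(f)\,ds$ for every $t \geq 0$, and the uniform Lipschitz estimate $|A^N_t - A^N_s| \leq \norme{f}_\infty |t-s|$ upgrades pointwise convergence to locally uniform convergence. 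Hence a.s.\ $A_t = \int_0^t \mu_s(f)\,ds$; in particular $A \in \Lambda^{\underline{f}, \norme{f}_\infty}$ and admits a continuous strictly increasing inverse $\sigma := A^{-1}$.

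To finally obtain $J^\infty = \tilde{S} \circ A$, I rely on the identity
\[
    J^N \circ \sigma^N \;=\; S^N \circ \tilde{P}_{N\cdot},
\]
which follows directly from $J^N = S^N \circ \tilde{P}_{N\cdot} \circ A^N$ and the definition of $\sigma^N := (A^N)^{-1} \in \Lambda^{1/\norme{f}_\infty, 1/\underline{f}}$. Inversion being continuous on $\Lambda^{\underline{f}, \norme{f}_\infty}$, we have $\sigma^N \to \sigma$ a.s.; by Lemma \ref{lem:continuity_of_composition}, the left hand side then converges a.s.\ in $D$ to $J^\infty \circ \sigma$, while the right hand side converges a.s.\ to $\tilde{S}$. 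Therefore $J^\infty \circ \sigma = \tilde{S}$, that is, $J^\infty = \tilde{S}\circ A$, and since $\tilde{S}$ has the law of the strictly $\alpha$-stable process $S^\alpha$, the proposition follows. The main obstacle is that one cannot directly pass to the limit in the triple composition $J^N = S^N \circ \tilde{P}_{N\cdot} \circ A^N$ by controlling the two ``outer'' factors independently, because the collateral jumps encoded in $S^N$ perturb the particle positions and hence $A^N$, so $S^N \circ \tilde{P}_{N\cdot}$ and $A^N$ are not independent. Inverting the time change $A^N$ to extract $S^N \circ \tilde{P}_{N\cdot}$ out of $J^N$ sidesteps this dependence issue, and the uniform bounds $\underline{f} \leq f \leq \norme{f}_\infty$ guarantee that all the time changes involved stay in the subspace $\Lambda^{\underline{f}, \norme{f}_\infty}$ on which composition and inversion are continuous.
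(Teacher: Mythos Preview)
Your proof is correct and shares the paper's overall strategy: exploit the representation $J^N = S^N \circ \tilde{P}_{N\cdot} \circ A^N$, the convergence $S^N \circ \tilde{P}_{N\cdot} \convlaw S^\alpha$, and the composition continuity of Lemma~\ref{lem:continuity_of_composition}. Two technical points differ. First, the paper identifies $A$ via Lemma~\ref{lem:app2} together with Lemma~\ref{lem:supp_mu} (the ``no fixed time of discontinuity'' property of $\mu$), whereas you observe directly that $\gamma \mapsto \int_0^t f(\gamma(s))\,ds$ is continuous on all of $D$; this bypasses Lemma~\ref{lem:supp_mu} at this stage and is a small economy. Second, and more substantively, the paper concludes by applying the continuous mapping theorem to the forward composition $(S^N \circ \tilde{P}_{N\cdot}, A^N) \mapsto J^N$, which tacitly relies on joint convergence of that pair; you instead invert the time change and pass to the limit in $J^N \circ \sigma^N = S^N \circ \tilde{P}_{N\cdot}$, where the left-hand side is controlled using only the assumed joint convergence of $(J^N,\mu^N)$ (plus continuity of $\mu \mapsto A \mapsto \sigma$) and the right-hand side needs only its marginal law. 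Your route is thus more explicit about the dependence issue you flag in your final paragraph, at the modest price of checking that inversion is continuous on $\Lambda^{\underline{f},\norme{f}_\infty}$, which follows immediately from the two-sided Lipschitz bound.
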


\begin{proof}
Tightness of $( J^N, \mu^N)_N$ follows from the fact that each coordinate is tight. In what follows, we therefore consider any converging subsequence. In the proof of Proposition \ref{prop:tightness_J} we showed that $J^{N} =  S^N \circ \tilde{P}_{N\cdot} \circ A^N_\cdot $ (see \eqref{eq:J^N_composition}) and that $S^N \circ \tilde P_N \convlaw S^\alpha$ (see \eqref{eq:process_converging_to_Salpha}). Lemma \ref{lem:continuity_of_composition} of the Appendix section shows that the composition map $\Phi : D ( \R_+ , \R)  \times \Lambda^{\underline f , \norme{f}_\infty}  \to D ( \R_+, \R ) , \, (\gamma, \eta) \mapsto \gamma \circ \eta$ is continuous, so it preserves convergence in law. Therefore, if we show that $A^N \convlaw A$, then we have that $J^{N} \convlaw S^\alpha \circ A$.  

Lemma \ref{lem:app2} of the Appendix section shows that the map 
\[
	\mathcal{P}(D) \to \Lambda^{\underline f , \norme{f}_\infty}  , \, m \mapsto \int_0^\cdot m_s(f) ds 
\]
is continuous at any point $ \tilde m $ satisfying that for any fixed $s,$  $\tilde m (\{ \gamma \in D \, | \, \gamma(s) \neq \gamma(s^-)\})=0.$ 

So it suffices to show that for any $s$, $\mu(\{ \gamma \in D \, | \, \gamma(s) \neq \gamma(s^-)\})=0$ a.s., which is proved in Lemma \ref{lem:supp_mu}, also stated in the Appendix section. This concludes this proof. 
\end{proof}

The above proposition is the main ingredient of our proof. To identify any possible limit,  we need to determine the dependency structure of $ S^\alpha $ and $ \mu ,$ that is, the joint law of $ ( S^\alpha , \mu ) ,$  and this is done in the remaining part of this section, first in case $ \alpha < 1, $ and then in case $ \alpha > 1.$ 
\subsection{Proof of Theorem \ref{theo:main-repr} for  $ \alpha < 1 $}\label{sec:passage_to_lim:alpha<1}

\begin{proof}
Let us consider the two-dimensional semimartingale $X^{N,i,k} \coloneqq (X^{N,i},X^{N,k}).$ 
The triplet $(X^{N, i }, X^{ N, k}, \mu^N)_N$ is tight, since its components are (Proposition \ref{thm:tightness} and its proof).  
We consider a fixed convergent subsequence that converges in law to a limit $(X^{ i }, X^{  k}, \mu ) $ where, by Proposition  \ref{prop:convergence_couple}, the joint law of  $(X^{  i }, X^{  k}) $ is the law of two coordinates of an infinite-exchangeable system directed by $ \mu$. During the following steps, we will identify the structure of the limit $ X^{i, k }=(X^{  i }, X^{  k}). $  We do this by using Theorem IX.2.4 of \cite{JS} that characterizes the limit in law of a sequence of semimartingales in terms of the convergence of its characteristics.

\noindent
{\bf Step 1.} 

Since for $\alpha<1$ all the integrals with respect to the Poisson measures appearing in the definition of $X^{N, i }$  are of locally bounded variation, we can choose the truncation function  $h \equiv 0$ in Definition II.2.3 of \cite{JS}. With the notation of equation II.2.4 in \cite{JS}, we set, for this choice of $h, $ 
  \begin{equation*}
X^{N, l}_t(h)= X^{N, l }_0+ B^{N,l}_t(h)+M^{N,l}_t(h), \qquad l=i,k ,
\end{equation*}
where  $M^{N,l}(h)=0.$ This implies (see Definition II.2.6 in \cite{JS}) that the characteristics of $X^{N,i,k}$ are $(B^{N,i,k}, C^{N,i,k}, \nu^{N,i,k})$ where $B^{N,i,k} = (B^{N,i}, B^{N,k})$, and 
\begin{align*}
&B^{N,l}_t =\int_0^t b(X^{N,l}_s,\mu^{N}_s)ds , \qquad l=i,k ,\\
&C^{N,i,k}=0,
\end{align*}
and the modified second characteristic (see Definition II.2.16 in \cite{JS}) $\tilde C^{N,i,k}=0.$ The compensator of the jump measure of $X^{N,i,k}$ is

\begin{equation*}
\nu^{N,i, k}(ds, dx_1, dx_2) = \sum_{l=1}^4\nu^{N,i,k}_l ( ds, dx_1, dx_2) , 
\end{equation*}
with 
\begin{align*}
	 \nu^{N,i,k}_1 (ds, dx_1, d x_2) & =  f(X^{N,i}_s) ds \int_{\R^\ast} \nu(du) \delta_{( \psi(X^{N,i}_s, \mu^N_s), u/N^{1/\alpha})}(dx_1, dx_2 ), \\
	\nu^{N,i,k}_2 (ds, dx_1, dx_2)& =  f(X^{N,k}_s) ds \int_{\R^\ast} \nu(du) \delta_{( u/N^{1/\alpha},\psi(X^{N,k}_s, \mu^N_s) )}(dx_1, dx_2 ),\\
	\nu^{N,i,k}_3 (ds, dx_1, dx_2)&=  \mu^N_s ( f) ds  \int_{\R^\ast} \nu(du) N \delta_{(  u/N^{1/\alpha}, u/N^{1/\alpha})}(dx_1, dx_2) , \\
	\nu^{N,i,k}_4 (ds, dx_1, dx_2)&  =-  \left(  f(X^{N,i}_s) +  f(X^{N,k}_s)  \right) ds \\
	& \quad \quad \quad \quad \quad \quad \quad \quad \quad \quad \int_{\R^\ast} \nu(du) \delta_{( u/N^{1/\alpha}, u/N^{1/\alpha})}(dx_1, dx_2 ) .
	\end{align*}

We now introduce the corresponding limit objects which are $B^{i,k} =(B^{i},B^{k}) ,$ the $2\times 2$ symmetric non-negative matrix  $\tilde C^{i,k}$ and the measure $\nu^{\infty, i,k}(ds,dx)$ given by 
\begin{align*}
& B^l_t=\int_0^tb(X^{l}_s,\mu_s)ds , \qquad l=i,k , \\
& \tilde C^{i,k}=0 ,
\end{align*}
\begin{equation}\label{eq:def_nu^infty,i}
\nu^{\infty, i, k}(ds,dx) =\nu^{\infty, i, k}_1(ds,dx) + \nu^{\infty, i, k}_2(ds,dx) + \nu^{\infty, i, k}_3(ds,dx) , \qquad x= ( x_1, x_2 ), 
\end{equation}
where
\begin{align*}
\nu^{\infty, i, k}_1(ds,dx_1, dx_2 ) & \coloneqq  f(X^{i}_s) ds \delta_{(\psi(X^{i}_s, \mu_s), 0)}(dx_1, dx_2) , \nonumber  \\ 
	\nu^{\infty, i, k}_2(ds,dx_1, dx_2) & \coloneqq  f(X^{k}_s) ds \delta_{(0, \psi(X^{k}_s, \mu_s))}(dx_1, dx_2) , \nonumber\\
	\nu^{\infty, i, k}_3(ds,dx_1, dx_2 ) & \coloneqq \int_{\R^\ast} \nu^\alpha(du) \delta_{(\mu^{1/\alpha}_s(f) u, \mu^{1/\alpha}_s(f) u)}(dx_1, dx_2) ds , \nonumber
\end{align*}
and where $\nu^{\alpha}$ is the L{\'e}vy measure given by \eqref{eq:nualpha}. The limit measure corresponding to $\nu^{N,i,k}_4$ equals zero. 

To employ Theorem IX.2.4 in \cite{JS}, we need to check that, as $ N \to \infty, $ 
\begin{equation}\label{eq:cvb}
(X^{N,i,k}, B^{N,i,k}, \tilde C^{N,i,k} )\convlaw (X^{i,k} ,B^{i,k},\tilde C^{i,k})
\end{equation} 
in $D(\R_+, \R^{2+2+ 4}),$ 
and that
\begin{equation}\label{eq:cvnu} 
 \left(X^{N,i,k}, \int_{[0,\cdot]\times \R^2} g d\nu^{N,i,k} \right) \convlaw \left(X^{i,k} , \int_{[0,\cdot]\times \R^2} g  d\nu^{\infty, i,k} \right)
 \end{equation}
in $D(\R_+, \R^{2+1})$, for all functions $g\in C_1 (\R^2 ) $ (recall the Definition VII 2.7 of \cite{JS}), which are continuous, bounded functions $g: \R^2 \to \R$ which are zero around zero. 
    
Using Lemma \ref{lem:app5}, $B^{N,l}$ is obtained by applying a continuous mapping to the originally converging subsequence $(X^{N,l},\mu^N),$ hence \eqref{eq:cvb} follows.

Now we establish the convergence in law \eqref{eq:cvnu}. 
Denote 
$$I^N\coloneqq  \int_{[0,\cdot]\times \R^2} g d\nu^{N,i,k}  = I^{N,1}+I^{N,2}+I^{N,3}+I^{N,4}$$ 
with $$I^{N,l}\coloneqq \int_{[0,\cdot]\times \R^2} g d\nu^{N,i,k}_l, \qquad l\in \{1,\ldots,4\}, $$
and
$$ 
   I \coloneqq \sum_{l=1}^4I^l \coloneqq \sum_{l=1}^4\int_{[0,\cdot]\times \R^2} g d\nu^{\infty ,i,k}_l,$$
where we put $\nu^{\infty ,i,k}_4 = 0$. 

In what follows, we use Skorokhod's representation theorem. Indeed, since $D^2\times \cP(D)$, endowed with the Skorokhod topology and the corresponding weak convergence topology, is separable, complete, and metrizable (\cite{billingsley}), there exists a probability space $(\tilde\Omega, \tilde {\cal F}, \tilde \P)$ on which are defined random variables 
$( \tilde X^{N, i , k } , \tilde \mu^N )_N $ and  $ ( \tilde X^{ i, k }, \tilde \mu )  $ which are versions of $ ( X^{N, i , k } , \mu^N ) $ and 
$ (  X^{ i, k },  \mu )  $, respectively (that is, for all $N\in\N^*$, $ {\mathcal L} ( ( X^{N, i , k } , \mu^N )) = {\mathcal L} (( \tilde X^{N, i , k } , \tilde \mu^N )) $ and $ {\mathcal L} ( ( \tilde X^{ i, k }, \tilde \mu )) = {\mathcal L} (  (  X^{ i, k },  \mu ) )$), such that the convergence 
$$ \lim_{N\to\infty}( \tilde X^{N, i , k } , \tilde  \mu^N ) =  ( \tilde X^{ i, k }, \tilde  \mu )$$
holds $\tilde\P$-almost surely.   

Write  $\tilde I^{N, l}_t$ for the respective versions of $ I^{N, l}_t,$ $l =1,2,3,4$, which are defined on $(\tilde\Omega, \tilde {\cal F}, \tilde \P)$ by
\begin{eqnarray*}
 \tilde I^{N, 1}_t &\coloneqq & \int_{0}^t f(\tilde X^{N,i}_s)  \int_{{\R^\ast} } \nu(du)  g  (\psi(\tilde X^{N,i}_s, \tilde \mu^N_s),  u/N^{1/\alpha}) ds ,\\ 
  \tilde I^{N, 2}_t & \coloneqq & \int_{0}^t f(\tilde X^{N,k}_s)  \int_{{\R^\ast} } \nu(du)  g  (  u/N^{1/\alpha},\psi(\tilde X^{N,k}_s, \tilde \mu^N_s)) ds ,\\ 
\tilde I_t^{N,3} & \coloneqq & \int_0^t\tilde\mu_s^N(f) ds\times A_N(g),\\
\tilde I^{N, 4}_t & \coloneqq & \int_{0}^t \left (f(\tilde X^{N,i}_s)+f(\tilde X^{N,k}_s)\right )ds \times A_N(g)/N,
\end{eqnarray*}
where we denote
\begin{equation*}
A_N(g) \coloneqq N\int_{\R^*} g(u/N^{1/\alpha},u/N^{1/\alpha})\nu(du) .
\end{equation*}
Moreover, $\tilde I^{ l}_t$ denotes the respective version of $ I^{ l}_t,$ $l=1,2,3, 4,$ given by   
\begin{eqnarray*}
 \tilde I^{ 1}_t &\coloneqq &  \int_{0}^t f(\tilde X^{i}_s)   g  (\psi(\tilde X^{i}_s, \tilde \mu_s),  0) ds ,\\
  \tilde I^{ 2}_t &\coloneqq  & \int_{0}^t f(\tilde X^{k}_s)   g  (  0,\psi(\tilde X^{k}_s, \tilde \mu_s)) ds ,\\ 
\tilde I_t^{3} &\coloneqq &\int_0^t\tilde\mu_s(f) ds\times \int_{\R^*} g(u,u)\nu^{\alpha}(du),\\
\tilde I^{ 4}_t &\coloneqq &0 .
\end{eqnarray*}

By Lemma \ref{lem:app4}, for $ l= 1, 2, $ and for all $ t \geq 0 ,$ almost surely, as $ N \to \infty, $ 
$$ \sup_{s \le t } | \tilde I^{N, l}_s -  \tilde I^l_s | \to 0 .
$$
Using Proposition \ref{prop:cb} with $G(x) \coloneqq g(x,x)$, for $g  \in C_1 ( \R^2),$ we deduce that  
\begin{equation*}
\lim_{N\to\infty} A_N(g)=\lim_{N\to\infty}N\int_{\R^*} g(u/N^{1/\alpha},u/N^{1/\alpha})\nu(du)ds =\int_{\R^*} g(u,u)\nu^{\alpha}(du).
\end{equation*}
In particular $A_N/N\to 0.$ Relying now on Lemma \ref{lem:app2} and Lemma \ref{lem:app4} (which we apply with the choice $ g ( x, x) = 1 $ for all $x$), we have, for any $ t \geq 0, $ for $l=3, 4 ,$ almost surely as $ N \to \infty, $  
$$ \sup_{s \le t } | \tilde I^{N, l}_s -  \tilde I^l_s | \to 0 .
$$
\ec
This concludes the proof of the almost sure convergence of  $\tilde I^N$ to $\tilde I$ as processes in  $C ( \R_+ , \R )$ (with the topology of uniform convergence on compact sets). Let now  $ \Phi : D ( \R_+, \R^2) \times C( \R_+, \R ) \to \R $ be a continuous bounded function. Then, by dominated convergence, as $ N \to \infty, $ 
$$ \E [ \Phi ( \tilde X^{N, i , k }, \tilde I^N)] \to \E [ \Phi (  \tilde X^{ i , k }, \tilde I)].$$ 
But, by equality of the laws, 
$  \E [ \Phi (  \tilde X^{N, i , k },\tilde I^N)] =  \E [ \Phi (  X^{N, i , k }, I^N)]$
and  
$ \E [ \Phi (  \tilde X^{ i , k }, \tilde I)] = \E [ \Phi (   X^{ i , k }, I)].$
So we have shown that if $N\to\infty,$
\begin{align*}
 \E [ \Phi (   X^{N, i , k }, I^N)]=&\E \left[ \Phi \left(   X^{N, i , k }, \int_{[0, \cdot ] \times \R^2 } g   d \nu^{N, i, k }  \right) \right] \\
  &\longrightarrow \E \left[ \Phi \left(    X^{ i , k }, \int_{[0, \cdot ] \times \R^2 } g   d \nu^{\infty , i, k } \right)\right]=\E [ \Phi  (  X^{ i , k }, I)],
  \end{align*}
implying the desired convergence in law \eqref{eq:cvnu}. This concludes the first step of our proof. 	    	 

\noindent
{\bf Step 2.}

According to the previous step, recalling that $h\equiv 0$, the two-dimensional limit semimartingale $X^{i,k}$ is a process of the form $ X^{ i, k } = ( X^i, X^k), $ where for $ l=i, k , $ we have the decomposition
  \begin{equation*}
X^{ l}_t(h)= X^{ l }_0+ B^{l}_t(h)+M^{l}_t(h) .
\end{equation*}
The jump measure of the process $X^{i,k}$ has as compensator the random measure $\nu^{\infty,i,k}$ which is given in \eqref{eq:def_nu^infty,i} above.  The goal of this step is to prove that we
can represent the jump part of $ X^{i, k }$ in terms of three independent driving Poisson random measures. For this sake, we rely on Theorem II.7.4 in \cite{IW} which we apply with $\mathbf{X} = \R^2 \setminus  \{(0,0)\}$ and $q(t, dz, \omega) \eqqcolon q(t,dz)$
where
\begin{multline*}
q(t,dz ) \coloneqq f(X^{i}_{t^-})\delta_{(\psi(X^{i}_{t^-}, \mu_{t^-}), 0)} (dz) + f(X^{k}_{t^-})\delta_{(0, \psi(X^{k}_{t^-}, \mu_{t^-}))} (dz) \\
	+  \int_{\R^\ast} \nu^\alpha(dy) \delta_{( (\mu_{t^-}(f))^{1/\alpha} y, (\mu_{t^-}(f))^{1/\alpha} y)}(dz) , 
\end{multline*}
so that $\nu^{\infty,i,k}(t, dz) = q(t,dz) dt$.

Let $\chi \coloneqq  ([0, \infty [ \times  \R) \setminus\{(0,0)\}. $ For any  $\xi = (\xi_1, \xi_2 ) \in \chi ,$ we define 
\begin{align*}
	\theta(t, \xi) & \coloneqq (\psi(X^{i}_{t^-}, \mu_{t^-}), 0) \indiq_{\{\xi_1 \leq f(X^{i}_{t^-}) \}} + (0, \psi(X^{k}_{t^-}, \mu_{t^-})) \indiq_{\{ \xi_1 \in ] \norme{f}_\infty ,  \norme{f}_\infty + f(X^{k}_{t^-}) ] \}}  \\ 
	& +  ((\mu_{t^-}(f) )^{1/\alpha}\xi_2, (\mu_{t^-}(f) )^{1/\alpha} \xi_2) \indiq_{\{ \xi_1 \in 2 ] \norme{f}_\infty ,  3\norme{f}_\infty  ] \}},
\end{align*}
and we introduce a measure $m $ on $ \chi $ by $m(d\xi)  \coloneqq m_1(d\xi) + m_2(d\xi) + m_3(d\xi),$ where
\begin{align*}
	m_1(d\xi)  &\coloneqq  \indiq_{\{\xi_1 \leq \norme{f}_\infty \}} d\xi_1 \indiq_{\{ \xi_2 \in [0,1] \} } d\xi_2  ,   \\
	m_2(d\xi) & \coloneqq  \indiq_{\{ \xi_1 \in ] \norme{f}_\infty ,  2\norme{f}_\infty ] \}} d\xi_1 \indiq_{\{ \xi_2 \in [0,1] \} } d\xi_2  ,  \\
	m_3(d\xi)  &\coloneqq \frac{1}{\norme{f}_\infty}\indiq_{\{\xi_1 \in 2 ] \norme{f}_\infty , 3 \norme{f}_\infty   ] \}} d\xi_1 \nu^\alpha(d\xi_2) .
\end{align*}
Notice that $ m_1, m_2, m_3 $ have disjoint supports. 

In what follows, we verify that the condition 
\begin{equation}\label{eq:conditionIW}
	m\left( \{ \xi \in \chi \, : \, \theta(t,\xi) \in E \} \right) = q(t,E)
\end{equation}
is satisfied for all $E\subset \R^2\setminus\{(0,0)\}$. 
Clearly, 
$$
	m\left( \{ \xi \in \chi \, : \, \theta(t,\xi) \in E \} \right) 
	 = \phi_1 + \phi_2 + \phi_3, 
$$
with
\begin{align*}
	\phi_1 & \coloneqq  \int_\chi \indiq_{\{ \theta(t,\xi) \in E\}} m_1(d\xi) = \int_{[0, \norme{f}_\infty] \times [0,1]} \indiq_{\{ (\psi(X^{i}_{t^-}, \mu_{t^-}), 0)  \indiq_{\{\xi_1 \leq f(X^{i}_{t^-}) \}} \in E \}} d\xi_2 d\xi_1  \\
	& = \int_{[0, f(X^{i}_{t^-})]} d\xi_1 \indiq_{\{ (\psi(X^{i}_{t^-}, \mu_{t^-}), 0) \in E\}} + \int_{]f(X^{i}_{t^-}), \norme{f}_\infty]} d\xi_1 \indiq_{\{ (0, 0) \in E\}} \\
	& = f(X^{i}_{t^-}) \indiq_{\{ (\psi(X^{i}_{t^-}, \mu_{t^-}), 0) \in E \} }  + 0,
\end{align*}
since $ E$ does not contain $ (0,0 ),$ and similarly, 
\begin{align*}
	\phi_2 & \coloneqq  \int_\chi \indiq_{\{ \theta(t,\xi) \in E\}} m_2(d\xi) \\
	& = \int_{]\norme{f}_\infty, 2 \norme{f}_\infty] \times [0,1]} 
	\indiq_{\{ (0, \psi(X^{k}_{t^-}, \mu_{t^-})) \indiq_{\{ \xi_1 \in ] \norme{f}_\infty ,  \norme{f}_\infty + f(X^{k}_{t^-}) ] \}} \in E \}} d\xi_2 d\xi_1  \\
	& = \int_{] \norme{f}_\infty , \norme{f}_\infty + f(X^{k}_{t^-})]} d\xi_1 \indiq_{\{ (0, \psi(X^{k}_{t^-}, \mu_{t^-})) \in E\}} + \int_{] \norme{f}_\infty +  f(X^{i}_{t^-}), 2 \norme{f}_\infty]} d\xi_1 \indiq_{\{ (0, 0) \in E\}} \\
	& = f(X^{k}_{t^-}) \indiq_{\{ (0, \psi(X^{k}_{t^-}, \mu_{t^-})) \in E \} },
\end{align*}
and finally
\begin{align*}
	\phi_3 & \coloneqq  \int_\chi \indiq_{\{ \theta(t,\xi) \in E\}} m_3(d\xi) \\
	&= \frac{1}{\norme{f}_\infty} \int_{2]\norme{f}_\infty, 3 \norme{f}_\infty  ] \times \R^\ast} d \xi_1 \nu^\alpha(d\xi_2) \indiq_{\{ ((\mu_{t^-}(f))^{1/\alpha} \xi_2, (\mu_{t^-}(f))^{1/\alpha} \xi_2)  \in E \}  } \\
	& = \int_{\R^\ast} \nu^\alpha(d\xi_2) \indiq_{\{ ((\mu_{t^-}(f))^{1/\alpha} \xi_2, (\mu_{t^-}(f))^{1/\alpha} \xi_2)  \in E \}  } .
\end{align*}

Once condition \eqref{eq:conditionIW} is checked, Theorem II.7.4 in \cite{IW} yields the existence of a Poisson random measure $N$ on $\R_+ \times \chi$  with intensity $dt  m(d\xi),$ such that we have the representation 
\begin{equation*}
	\sum_{ s \le t  }  \indiq_{ \{\Delta X_s^{i, k } \in E \}} = \int_{[0,t] \times \chi} \indiq_{\{\theta(t,\xi) \in E \}} N(ds, d\xi) ,
\end{equation*}
for any $E\subset \R^2\setminus\{(0,0)\}.$

\noindent
{\bf Step 3.} 

We conclude our representation by constructing three independent Poisson random measures $\bar\pi^{i}(ds, dz)$, $\bar\pi^{k}(ds, dz)$ and $M(ds, dz)$ on $\R_+ \times [0,\norme{f}_\infty]$, $\R_+ \times [0,\norme{f}_\infty]$ and on $\R_+ \times \R^\ast$ respectively, with intensities $ds   dz $,  $ds  dz $ and $ds   \nu^\alpha(dz)$ respectively, such that 
$$\sum_{s\leq t}\Delta X_s^{i,k}=I^{\infty, i}+I^{\infty, k}+J^\infty ,$$ with
\[
	I^{\infty, l} = \int_{[0,t] \times \R_+} \psi(X^{l}_{s^-}, \mu_{s^-}) \indiq_{\{ z\leq f(X^{l}_{s^-}) \}} \bar\pi^{l}(ds, dz), \qquad\qquad l\in \{i, k\},
\]
and 
\[
	J^\infty = \int_{[0,t] \times \R^\ast} (\mu_{s^-}(f))^{1/\alpha} z M(ds,dz) ,
\]
to obtain the desired representation. 

Let us write $ \theta ( t , \xi ) = ( \theta_1 (t , \xi ), \theta_2 ( t, \xi) , \theta_3 ( t, \xi ) ) .$ Then 
\begin{multline*}
I^{ \infty , i}_t = 	\int_{[0,t] \times \chi } \theta_1(s,\xi) N(ds,d\xi) \\
= \int_{[0,t] \times [0, \norme{f}_\infty]} \psi(X^{i}_{s^-}, \mu_{s^-}) \indiq_{\{ \xi_1 \leq f(X^{i}_{s^-}) \}} N(ds,d\xi_1,  [ 0, 1 ] ) ,
\end{multline*}
\begin{multline*}
I^{ \infty , k }_t = 	\int_{[0,t] \times \chi } \theta_2(s,\xi)  N(ds,d\xi) \\
	 = \int_{[0,t] \times ] \norme{f}_\infty,+\infty [ } \psi(X^{k}_{s^-}, \mu_{s^-}) \indiq_{\{ \norme{f}_\infty < \xi_1 \leq \norme{f}_\infty + f(X^{k}_{s^-}) \}} N(ds,d\xi_1, [ 0, 1 ] ) ,
\end{multline*}
and finally 
$$	
	J^\infty_t = 	\int_{[0,t] \times \chi } \theta_3(s,\xi) N(ds,d\xi) 
			 = \int_{[0,t]\times \R^\ast}  (\mu_{s^-}(f))^{1/\alpha} \xi_2 N(ds,  ] 2\norme{f}_\infty, 3 \norme{f}_\infty ], d\xi_2) .
$$
Therefore, we define for any $A \subset [0, \norme{f}_\infty],$ 
\[
	\bar\pi^{i}([0,t]\times A ) \coloneqq N([0,t]\times A \times [0, 1 ] ) ,
\]
\[
	\bar\pi^{k}([0,t]\times A ) \coloneqq N([0,t] \times (\norme{f}_\infty + A )  \times [0, 1 ] ) ,
\]
where $ c + A = \{ c + a : a \in A\},$ 
and, for any $A \subset \R^\ast$, we set 
\[
	M([0,t]\times A ) \coloneqq N([0,t]\times  ] 2\norme{f}_\infty, 3 \norme{f}_\infty ] \times A) .
\]
$\bar\pi^{i}$, $\bar\pi^k$ and $M$ are independent by construction, since they use disjoint subsets of the support of $N.$ Extending $ \bar \pi^i $ and $ \bar \pi^k $ to the whole space $ \R_+ \times \R_+ $ such that their independence is preserved then yields the desired representation.  
\end{proof}
 
\subsection{Proof of Theorem \ref{theo:main-repr} for  $ \alpha > 1 $}\label{sec:passage_to_lim:alpha>1}
\begin{proof}
The proof follows the same ideas as the proof of the case $\alpha <1,$  but has to be adapted to the case $ \alpha > 1 .$  
As in the proof of the case $\alpha <1,$ it is immediate to see that $(X^{N, i }, X^{ N, k}, \mu^N )_N $  is tight since its components are. 
In what follows, we therefore consider a fixed subsequence that converges in law to a limit $(X^{  i }, X^{  k}, \mu )$
and identify the structure of the limit $(X^{  i }, X^{  k})$  relying on Theorem IX.2.4 in \cite{JS}.

{\bf Step 1.} 

We first apply Theorem II.2.34 of \cite{JS} to the semimartingale $(X^{N, i }, X^{ N, k})_N$ , with a fixed continuous truncation function $h$ such that $ h( x) = x $ for all $ | x| \le 1 $ and $ h( x) = 0 $ for all $ |x| > 2.$ For this  function $h,$ with the notation of equation II.2.4 in \cite{JS}, we set
  \begin{equation*}
X^{N, l}_t(h)= X^{N, l }_0+ B^{N,l}_t(h)+M^{N,l}_t(h),\;\;\;  l=i,k ,
\end{equation*}
where for $l=i, k,$ 
\[ M(h)^{N,l}_t=\sum_{j\neq l} \int_{[0,t]\times\R_+\times\R^\ast} h\left(\frac u{N^{1/\alpha}}\right)\indiq_{\{z\leq f(X^{N,j}_{s^-})\}}\tilde \pi^j(ds,dz,du),\]
where $\tilde\pi^{j} (ds, dz, du ) = \pi^{j} (ds, dz, du ) - ds dz \nu ( du ) $ is the compensated measure, and
\begin{align*}
B_t^{N,l}(h)=&\int_0^tb(X_s^{N,l},\mu_s^N)ds+\int_0^t N \mu_s^N ( f) ds\int_{ \R^\ast} \nu (du ) h ( u/ N^{ 1/\alpha } ) \\
&-\int_0^tf(X_s^{N,l}) ds\int_{ \R^\ast} \nu (du ) h ( u/ N^{ 1/\alpha } ).
\end{align*}
This implies (see Definition II.2.6 in \cite{JS}) that the characteristics of $X^{N,i,k}$ are $(B^{N,i,k},C^{N,i,k}, \nu^{N,i,k})$ where

$$B_t^{N,i,k}=(B_t^{N,i}(h),B_t^{N,k}(h)), \qquad C^{N,i,k}=0,$$
and
\begin{align*}
	\nu^{N, i, k }(ds, dx_1,dx_2)&= f(X^{N,i}_s)ds\int_{\R^\ast} \nu(du) \delta_{( 0,u/N^{1/\alpha})}(dx_1, dx_2)\\
	& + f(X^{N,k}_s)ds\int_{\R^\ast} \nu(du) \delta_{( u/N^{1/\alpha}, 0)}(dx_1, dx_2)\\
	& + \mu^N_s ( f) ds  \int_{\R^\ast} \nu(du) N \delta_{( u/N^{1/\alpha},u/N^{1/\alpha})}(dx_1,dx_2)  \\ 
	&- \left(  f(X^{N,i}_s) +  f(X^{N,k}_s)  \right) ds \int_{\R^\ast} \nu(du)  \delta_{( u/N^{1/\alpha},u/N^{1/\alpha})}(dx_1,dx_2).
\end{align*}
Moreover, the second modified characteristic (see Equation 2.2 of Chapter IX in \cite{JS}) is the $ 2 \times 2 $ matrix with entries 
\begin{eqnarray*}
( \tilde C^{N, i,k}_t)_{l,m} & = & \int_{[0,t]\times \R^2} h(x_l) h(x_m) \nu^N(ds, dx_1, dx_2) 
  \\
 & = &
 \int_0^t   \mu^N_s ( f) ds  N \int_{\R^\ast} h^2 ( u/N^{1/\alpha}) \nu ( du)\\
 && -\int_0^t   \left(  f(X^{N,i}_s) +  f(X^{N,k}_s)\right ) ds   \int_{\R^\ast} h^2 ( u/N^{1/\alpha}) \nu ( du) ,
\end{eqnarray*}
for $ 1 \le l, m \le 2,$ where we have used that $ h(0) =0 .$  
\ec

To employ Theorem IX.2.4 in \cite{JS}, we will show  that, as $N\to\infty,$ 
\begin{equation}\label{eq:tildec}
(X^{N,i,k} ,B^{N,i,k} ,\tilde C^N  )\convlaw (X^{i,k} ,B^{i,k} ,\tilde C )
\end{equation}
in $D(\R_+, \R^{2+2+4}),$ as well as the convergence 
\begin{equation*}
 \left(X^{N,i,k}, \int_{[0,\cdot]\times \R} g d\nu^{N, i, k }\right) \convlaw \left(X^{i,k}, \int_{[0,\cdot]\times \R} g  d\nu^{\infty, i, k }\right),
 \end{equation*}
in $D(\R_+, \R^{2+1}),$ for all $g \in C_1 ( \R^2), $ where 
\begin{eqnarray*}
\nu^{\infty, i, k }(ds,dx_1, dx_2) &\coloneqq  & ds \int_{\R^\ast} \nu^\alpha(dy) \delta_{ (\mu^{1/\alpha}_s(f) y, \mu^{1/\alpha}_s(f) y)}(dx_1, dx_2)  ,\\
 B_t^{i,k}(h) &\coloneqq &(B_t^{i}(h),B_t^{k}(h)),\\
B_t^{l}(h) &\coloneqq & \int_0^tb(X_s^{l},\mu_s)ds+\int_0^t \mu_s ( f) ds\int_{ \R^\ast} (h(u) - u) \nu^{\alpha} (du ) ,\qquad l=i,k ,
\end{eqnarray*}
and
$$ (\tilde C_t)_{l, m } \coloneqq \int_0^t   \mu_s ( f) ds   \int_{\R^\ast} h^2 ( u) \nu^{\alpha} ( du) , \qquad 1 \le l, m \le 2.$$ 
To prove this convergence, we start observing that for all $ g \in C_1 ( \R^2),$ 
\begin{multline*}
 \int_{[0,\cdot]\times \R^2 } g d\nu^{N,i,k} =  A_N(g)  \int_0^\cdot \mu_s^N ( f) ds \\
 +\int_0^\cdot  f(X^{N,i}_s) ds \left( \int_{\R^\ast} \nu ( du) \left[ g ( 0, \frac{u}{N^{ 1/\alpha}} ) - g( \frac{u}{N^{ 1/\alpha}}, \frac{u}{N^{ 1/\alpha} }) \right] \right) \\
 + \int_0^\cdot  f(X^{N,k}_s) ds \left( \int_{\R^\ast} \nu ( du) \left[ g (  \frac{u}{N^{ 1/\alpha}}, 0 ) - g( \frac{u}{N^{ 1/\alpha}}, \frac{u}{N^{ 1/\alpha}} ) \right] \right)  ,
\end{multline*}
\ec
where we denote as before
$$A_N(g) \coloneqq N \int_{\R^\ast} \nu (du ) g ( u/N^{1/ \alpha }, u/N^{1/ \alpha }).$$
Equation \eqref{eq:js1} implies that for  $g \in C_1 ( \R^2), $ as $N \to \infty, $  
$$A_N(g )\to  \int_{\R\ast}g (u)\nu^{\alpha}(du).$$
Moreover, by dominated convergence, using that $g$ is continuous and  $ g(0,0 ) =0 ,$ we also have that, as $ N \to \infty, $ 
$$  \int_{\R^\ast} \nu ( du) \left[ g ( 0, \frac{u}{N^{ 1/\alpha}} ) - g( \frac{u}{N^{ 1/\alpha}}, \frac{u}{N^{ 1/\alpha} }) \right]  \to 0 .$$

Using the Skorokhod representation theorem to represent $(X^{N,k}, \mu^N)$, exactly as in the proof for $\alpha <1,$ we deduce from the above discussion the weak convergence of  
$(X^{N,i,k} , \int_{[0,\cdot]\times \R} g d\nu^{N, i,k })_N \convlaw (X^{i,k} , \int_{[0,\cdot]\times \R} g  d\nu^{\infty, i,k })$ in the Skorokhod space, for any $g \in C_1 ( \R^2 ).$ 

Finally, to prove the convergence \eqref{eq:tildec}, notice that, by dominated convergence, 
$$ \int_{\R^\ast}\left( h( u/N^{1/\alpha} ) + h^2 ( u/N^{1/\alpha} ) \right) \nu ( du) \to 0,$$ 
such that the desired convergence follows from the second item of Proposition \ref{prop:cb}, together with the arguments already used in the case $ \alpha < 1 .$ 
\ec

\noindent
{\bf Step 2.}

Due to Theorem 2.4, Chapter IX of \cite{JS}, the limit process $X^{i,k}=(X^{i},X^k)$ is a semimartingale with characteristics $(B^{i, k },0,\nu^{ \infty, i, k }).$
Following the steps of the case $\alpha <1,$ we deduce that there exists a Poisson random measure $M$ on $\R_+ \times \R^\ast $  with intensity $dt  \nu^\alpha ( dz) ,$  such that we have the representation 
\begin{equation*}
	\sum_{ s \le t  }  \indiq_{ \Delta X^l_s   \in E }   = \int_{[0,t] \times \R^\ast } \indiq_{\{\theta(t,\xi) \in E \}} M(ds, d \xi ) ,  \qquad l = i, k, 
\end{equation*}
for any $ E $ such that $ \bar E \subset \R^\ast ,$ where $\theta(t, \xi) =  (\mu_{t^-}(f))^{1/\alpha}  \xi  .$ 
The desired representation of $ X^l, l=i, k , $ follows from this as in the proof of the case $\alpha<1. $
\end{proof}
\ec

\section{Proof of Theorem \ref{theo:main-ident}}\label{sec:uniqueness}

The results and conclusions in the present section hold for both cases $\alpha<1$ and $\alpha>1$. 
\begin{prop}\label{thm:dir_m}
The directing measure of the  system \eqref{eq:limit:system} is  $\mathcal{L}(\bar X^1 | S^\alpha)$.
\end{prop}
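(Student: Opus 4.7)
The strategy is to apply the following general fact: for an exchangeable sequence in a Polish space, if there exists a sub-$\sigma$-algebra $\mathcal{G}$ conditionally on which the sequence is i.i.d., then the common conditional law is a version of the directing measure. In our setting, $\mathcal{G}$ will be $\sigma(S^\alpha)$, and the common conditional law will be $\mathcal{L}(\bar X^1 | S^\alpha)=\bar\mu$.

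First, I observe that $\bar\mu$ is by construction $\sigma(S^\alpha)$-measurable: this is precisely the second line of system \eqref{eq:limit:system}. Hence all random coefficients and driving noises appearing in the SDE for $\bar X^i$ that are \emph{not} particle-specific -- namely $b(\cdot,\bar\mu_s)$, $\psi(\cdot,\bar\mu_s)$, $\bar\mu^{1/\alpha}_s(f)$, and $S^\alpha$ itself -- are $\sigma(S^\alpha)$-measurable.

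Next, I would invoke the strong well-posedness of \eqref{eq:limit:system} established in \cite{LLM} to deduce that, conditionally on $S^\alpha$, the SDE for $\bar X^i$ has deterministic (non-random, $S^\alpha$-measurable) coefficients, and is driven only by the individual noise $(\bar X^i_0,\bar\pi^i)$. By strong uniqueness, there exists a single measurable map $F$, not depending on $i$, such that
\[
\bar X^i = F(\bar X^i_0,\bar\pi^i, S^\alpha)\quad \text{a.s.}
\]
Since the sequence $(\bar X^i_0,\bar\pi^i)_{i\geq 1}$ is i.i.d.\ and globally independent of $S^\alpha$, a disintegration argument then shows that conditionally on $\sigma(S^\alpha)$ the random variables $(\bar X^i)_{i\geq 1}$ are i.i.d.\ with common conditional law $\mathcal{L}(\bar X^1 | S^\alpha)=\bar\mu$.

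Finally, to identify this common conditional law with the directing measure, I would combine the De Finetti--Hewitt--Savage theorem with a conditional strong law of large numbers: the directing measure of $(\bar X^i)_{i\geq 1}$ is the a.s.\ limit of the empirical measures $n^{-1}\sum_{i=1}^n\delta_{\bar X^i}$ in $\mathcal{P}(D(\R_+,\R))$, and the conditional SLLN applied to a countable convergence-determining family of bounded continuous test functions on $D(\R_+,\R)$ shows that this same limit equals $\mathcal{L}(\bar X^1 | S^\alpha)$ almost surely. The main obstacle, though essentially routine, is the careful passage from pointwise conditional SLLN to convergence of the empirical measures in the weak topology on $\mathcal{P}(D(\R_+,\R))$; this relies on $D(\R_+,\R)$ being Polish and is the only place where a separability argument is required.
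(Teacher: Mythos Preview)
Your proposal is correct and follows essentially the same approach as the paper: both use that strong well-posedness yields a single measurable map $\bar X^i = \Phi(\bar X^i_0,\bar\pi^i,S^\alpha)$, and then that independence of $(\bar X^i_0,\bar\pi^i)_{i\geq 1}$ from $S^\alpha$ implies the $\bar X^i$ are conditionally i.i.d.\ given $S^\alpha$. The only difference is in the final identification step: the paper simply invokes Lemma (2.12)(a) of \cite{aldous}, which states directly that if an exchangeable sequence is conditionally i.i.d.\ given some $\sigma$-algebra, then the common conditional law is (a version of) the directing measure, whereas you sketch a proof of that lemma via the conditional SLLN and convergence of empirical measures. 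Your route is longer but self-contained; citing Aldous is shorter and avoids the separability bookkeeping you flag at the end.
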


\begin{proof}
By Lemma (2.12)(a) of \cite{aldous}, it suffices to show that, for all $N\geq 1$, $(\bar X^i)_{i=1}^N$ are conditionally i.i.d. given $S^\alpha$.
By the definition of a strong solution, there exists a measurable map $\Phi$, such that for all $i\in\N^*$, all $T\geq 0,$
    \[
    	(\bar X^i_t)_{t \in [0,T]} = \Phi(\bar X^ i_0, \bar\pi^i_{| [0,T]\times \R_+}, (S^\alpha_t)_{t \in [0,T]}) .
    \]
Let now $ n \geq 2 $ and take a family of continuous bounded real-valued functions $g_i, 1 \le i \le n. $ Using that $S^\alpha$ is independent of the Poisson random measures and of the initial conditions, we have that   
    \begin{multline*}
    	\E\left[\prod_{i=1}^n g_i (\bar X^i_{[0,T]})  | S^\alpha \right]  
	= \E\left[ \prod_{i=1}^n g_i(\Phi(\bar X^{i}_0, \bar\pi^i_{| [0,T]\times \R_+}, (S^\alpha_t)_{t \in [0,T]}))  | S^\alpha \right]\\
	 = \prod_{i=1}^n  \E\left[ g_i(\Phi(\bar X^{i}_0, \bar\pi^i_{| [0,T]\times \R_+}, (S^\alpha_t)_{t \in [0,T]})) | S^\alpha\right]  
	 =\prod_{i=1}^n  \E \left[g_i(\bar X^{i})  | S^\alpha \right]   ,
    \end{multline*}
which concludes the proof. 
\end{proof}

We now give the proof of Theorem \ref{theo:main-ident}. 

\begin{proof}
For some fixed $ K > 0,  $ let $T_K=\inf\{t\geq 0 \, | \, \Delta S^{\alpha}_t\geq K\}.$ Recall that $M(ds,dz)$ denotes the jump measure of $S^\alpha$. To compare $ \bar X^i $ and $ X^i, $ we decompose the processes into their stochastic integral part, their continuous drift part and, in case $ \alpha < 1, $ their bounded variation main jump part. To do so, we introduce the following notation. 

For generic processes $X$ and $\tilde X$ and measures $\mu$ and $\tilde \mu$ that can be either the conditional laws of the processes given $S^\alpha,$ their directing measures or the empirical measures, we define
\begin{equation*}
			M_t(\mu,\tilde \mu) \coloneqq 
	\begin{cases}
	\int_{[0, t ] \times \R^\ast }   \left( \mu^{1/ \alpha}_{s^-}(f)  - \tilde \mu^{1/ \alpha}_{s^-}(f)\right) z    M ( ds, dz)  ,& \alpha < 1 \\
	\int_{[0, t ] \times \R^\ast }   \left(\mu^{1/ \alpha}_{s^-}(f)  - \tilde \mu^{1/ \alpha}_{s^-}(f)\right) z    \tilde M ( ds, dz), & \alpha > 1  ,
	\end{cases}
\end{equation*}
\begin{equation*}
			B_t (X, \tilde X, \mu , \tilde \mu) \coloneqq \int_0^t[   b ( X_{s} ,  \mu_{s}) -  b ( \tilde X_{s}, \tilde \mu_s  )] ds ,
\end{equation*}
and, for $\alpha<1$, 
\begin{multline*}
	 \Psi_t (X, \tilde X, \mu , \tilde \mu) \coloneqq \\
	  \int_{[0,t]\times \R_+} \left[   \psi(X_{s^-},\mu_{s^-}) \indiq_{\{z\leq f(X_{s^-})\}} - \psi(\tilde X_{s^-},\tilde\mu_{s^-}) \indiq_{\{z\leq f(\tilde X_{s^-})\}} \right] \bar\pi(ds,dz)  .
\end{multline*}
This notation coincides with the notation of Lemmas 5.1, 5.2 and 5.3 in \cite{LLM}.

We consider first the case $\alpha >1.$ 
Introducing the empirical measures $\mu^{N,X}$ and $\mu^{N,\bar X}$ of the first $N$ coordinates of the systems $(X^i)_{ i \geq 1}$ and $(\bar X^i)_{ i \geq 1 }, $ respectively, we can then upper bound  
\begin{multline*}
| X_t^i - \bar X_t^i | \le | M_t ( \mu , \mu^{N, X})| + | M_t ( \mu^{N, X}, \mu^{N, \bar X} ) | + | M_t ( \mu^{N, \bar X}, \bar \mu )| \\
+| B_t ( X^i, X^i, \mu, \mu^{N, X} ) | + | B_t ( X^i, \bar X^i, \mu^{N, X},  \mu^{N, \bar X} ) | + | B_t ( \bar X^i, \bar X^i,   \mu^{N, \bar X}, \bar \mu ) | .
\end{multline*} 

By Lemma 5.1 of \cite{LLM} and the fact that $f$ is Lipschitz, we have that for any $ 1< \alpha < \alpha_+ ,$ for some constant that depends on $  \alpha, \alpha_+ $ and on $K ,$ 
\begin{equation*}
 \E\left [  |M_t(\mu,\tilde \mu)|^{\alpha_+}\indiq_{\{t < T_K\}}\right ]  \le C     
 \begin{cases}
   \int_0^t \E \left [   W_1^{\alpha_+}(  \mu_{s}, \tilde\mu_{s} )\indiq_{\{s < T_K\}}\right] ds \\
   \int_0^t \E \left [   W_1(  \mu_{s}, \tilde\mu_{s} )\indiq_{\{s < T_K\}}\right] ds  .
\end{cases} 
\end{equation*} 
We will use the second upper bound when comparing $ \mu $ to its empirical version $ \mu^{N, X}, $ and $ \bar \mu $ to $ \mu^{N, \bar X}.$ But we will use the first upper bound when comparing the two empirical measures. 

Moreover, Lemma 5.2 of \cite{LLM} implies that in case $ \alpha > 1,$  
\begin{multline*} 
\E[|B_t (X, \tilde X, \mu , \tilde \mu)|^{\ap}\indiq_{\{t < T_K\}}]\leq   
  C_t   
  \begin{cases}
\E \int_0^t  \indiq_{\{s < T_K\}} \left(  |  X_s - \tilde X_s |^{\ap} + W_{1}^{\ap} ( \mu_s, \tilde \mu_s ) \right) ds \\
\E \int_0^t  \indiq_{\{s < T_K\}} \left( | X_s - \tilde X_s | + W_{1} ( \mu_s, \tilde \mu_s )\right) ds .
\end{cases}
\end{multline*}
We will use the second upper bound when dealing with $| B_t ( X^i, X^i, \mu, \mu^{N, X} ) |$ and with $| B_t ( \bar X^i, \bar X^i,   \mu^{N, \bar X}, \bar \mu ) |,$ while we use the first upper bound when dealing with $| B_t ( X^i, \bar X^i, \mu^{N, X},  \mu^{N, \bar X} ) |.$ 
Doing so, we get 
\begin{multline}\label{al:gron} 
\E[| X^{i}_t - \bar X^{i}_t |^{\ap}	\indiq_{\{t < T_K\}}]  \leq C_t \int_0^t \E[\indiq_{\{s < T_K\}}\left (| X^{i}_s - \bar X^{i}_s |^{\ap}+W_1^{\ap}(\mu_s^{N, X}, \mu_s^{N, \bar X} ) \right) ] ds  \\
+C_t \int_0^t \E[ W_{1}(\mu_s,\mu_s^{N, X} ) + W_1 ( \bar\mu_s, \mu_s^{N, \bar X } )] ds . 
\end{multline}

Using Jensen's inequality with the convex function  $x\to x^{\ap}$ and the diagonal coupling $\frac 1N\sum_{i=1}^N\delta_{(X^{i}_s,\bar X^{i}_s)}$ of $\mu_s^{N, X}$ and $\mu_s^{N, \bar X},$ we obtain
\begin{align*}
	\E[ \indiq_{\{s < T_K\}}& W_{1}^{\ap}( \mu^{N, X}_s, \mu^{N, \bar X}_s)]   \leq \E\left[ \indiq_{\{s < T_K\}}\left( \frac{1}{N}\sum_{j=1}^N |X^{j}_s - \bar X^{j}_s|\right )^{\ap}\right]  \\
	&\leq \E\left[ \indiq_{\{s < T_K\}} \frac{1}{N}\sum_{j=1}^N |X^{j}_s - \bar X^{j}_s|^{\ap}\right] =  \E [ \indiq_{\{s < T_K\}} |X^{i}_s - \bar X^{i}_s|^{\ap} ] .\nonumber
\end{align*}
As a consequence, \eqref{al:gron} now reads as 
\begin{multline}\label{al:gron2} 
\E[| X^{i}_t - \bar X^{i}_t |^{\ap}	\indiq_{\{t < T_K\}}]  \leq C_t \int_0^t \E[\indiq_{\{s < T_K\}} | X^{i}_s - \bar X^{i}_s |^{\ap}  ] ds  \\
+C_t \int_0^t \E[ W_{1}(\mu_s,\mu_s^{N, X} ) + W_1 ( \bar\mu_s, \mu_s^{N, \bar X } )] ds.
\end{multline}
The last two terms above evaluate the distance between an empirical measure and its limiting directing measure for an exchangeable system. These two terms are
 treated using Lemma 5.7 in \cite{LLM}, which is based on the results of \cite{fournierguillin}. It yields
\begin{align*}
\E[ W_{1}(\mu_s,\mu_s^{N, X} ) + W_1 ( \bar\mu_s, \mu_s^{N, \bar X } )]   \leq C_s N^{- 1/2} ,
\end{align*}
such that all in all, \eqref{al:gron2}  becomes 
\begin{equation*}\label{eq:Gron}
\E[| X^{i}_t - \bar X^{i}_t |^{\ap}	\indiq_{\{t < T_K\}}]  \leq C_t \int_0^t\E[\indiq_{\{s < T_K\}}| X^{i}_s - \bar X^{i}_s |^{\ap}] ds +C_t N^{-1 /2}.
\end{equation*}
Using Gr\"onwall's lemma, we conclude that there exists a different constant, still denoted $C,$ which depends on $\alpha, \ap , K $ and $ t,$ such that 
\begin{equation*}\label{eq:Gron1}
\E[| X^{i}_t - \bar X^{i}_t |^{\ap}	\indiq_{\{t < T_K\}}] \leq  C N^{- 1/2} ,
\end{equation*}
such that, letting $N\to\infty, $ for any fixed $t, $ almost surely, 
$$| X^{i}_t - \bar X^{i}_t |^{\ap}	\indiq_{\{t < T_K\}}=0\; a.s.$$
Letting $K\to \infty$ and using that $\lim_{K\to\infty}T_K=\infty,$ this implies the claim in case $ \alpha > 1.$

We now discuss the case $\alpha <1.$ Then, by Lemmas 5.1, 5.2 and 5.3 of \cite{LLM}, recalling that $d_{\am}(x,y)\coloneqq |x-y|\wedge|x-y|^{\am},$ for a constant depending on $ \alpha $ and $K,$ 
 \begin{equation*}\label{eq:controlI2}
 \E\left [  |M_t(\mu,\tilde \mu)|\indiq_{\{t < T_K\}}\right ]  \le C
 \begin{cases}
   \int_0^t\E\left[|\mu_s(f) - \tilde \mu_s(f)|\indiq_{\{s < T_K\}}\right]ds \\
 \int_0^t\E\left[W_{d_\am}(\mu_s,\tilde \mu_s)\indiq_{\{s < T_K\}}\right]ds ,
\end{cases}  
 \end{equation*}
\begin{equation*}
\E[|B_t (X, \tilde X, \mu , \tilde \mu)|\indiq_{\{t < T_K \}}]\leq C 
\E\int_0^t \indiq_{\{s<T_K\}} (d_\am(X_s,\tilde X_s)+W_{d_\am}(\mu_s,\tilde\mu_s))ds ,
\end{equation*} 
and finally, 
\begin{equation*}\label{eq:controlpsi1}
\E[|\Psi_t(X,\tilde X, \mu, \tilde \mu)|\indiq_{\{t < T_K\}}] \leq C \int_0^t \E\left[ ( d_\am(X_s,\tilde X_s) + W_{d_\am}(\mu_s, \tilde \mu_s)) \indiq_{\{t<T_K\}}\right] ds.
\end{equation*}
Therefore, 
\begin{multline*}
	\E[| X^{i}_t - \bar X^{i}_t |	\indiq_{\{t < T_K\}}]  \leq 
	\E[| B_t(X^{i}, \bar X^{i}, \mu, \bar \mu) | \indiq_{\{t < T_K\}}] + \E[| \Psi_t(X^{i}, \bar X^{i}, \mu, \bar \mu) | \indiq_{\{t < T_K\}}] \\ 
 + \E[| M_t(X^{i}, \bar X^{i}, \mu, \bar \mu) | \indiq_{\{t < T_K\}}]\\ 
  \le C \E\int_0^t \left( 
\indiq_{\{s<T_K\}}(|X^{i}_s-\bar X^{i}_s|+W_{d_{\am}}(\mu_s,\bar\mu_s)) \right) ds   .
\end{multline*}
It is straightforward to see that 
 $$W_{d_{\am}}(\mu_s,\bar\mu_s)\leq W_{\am}(\mu_s,\bar\mu_s)\wedge W_{1}(\mu_s,\bar\mu_s),$$
such that  
\begin{align}\label{in:triangle2}
	\E[\indiq_{\{s < T_K\}} W_{d_{\am}}(\mu_s, \bar\mu_s)]&  \leq \E[W_{{\am}}(\mu_s, \mu^{N,X}_s)] + \E[\indiq_{\{s < T_K\}} W_{1}(\mu^{N,X}_s, \mu^{N,\bar X}_s)] \\ \nonumber
	& + \E[ W_{{\am}}(\mu^{N,\bar X}_s, \bar\mu_s)] .
\end{align}
For the first and third term we use again Lemma 5.7 in \cite{LLM} for $\alpha<1$, which gives
\begin{align*}\label{in-fg}
\E[ W_{{\am}}(\mu_s, \mu^{N,X}_s) + W_{\am}(\mu^{N,\bar X}_s, \bar\mu_s)  ]  \leq C_s ( N^{-\am} \indiq_{\{\am < 1/2\}} + N^{-1/2} \indiq_{\{1/2 < \am < 1 \}}) .
\end{align*}
The term appearing in the middle of \eqref{in:triangle2} is bounded as previously by
\[\E[\indiq_{\{s < T_K\}} W_{1}(\mu^{N,X}_s, \mu^{N,\bar X}_s)] \leq \E[\indiq_{\{s < T_K\}}|X^{i}_s-\bar X^{i}_s|].\]
The conclusion then follows as in the case $ \alpha > 1, $ by Gr\"onwall's lemma. 

Let us finally show that 
$\mu_s=\bar\mu_s$ a.s. Indeed, using Lemma 2.15 of \cite{aldous}, a directing measure of an infinite-exchangeable system is an a.s. weak limit of its empirical measures. As for all $i\in\N^\ast,$  $t\geq 0,$ $X^{i}_t=\bar X^{i}_t$ a.s., the  empirical measures of both systems are equal and thus their limits too. 

\end{proof}

\section{Proof of Theorem \ref{theo:main}}\label{sec:proof_theo_main}

Theorems \ref{theo:main-repr} and \ref{theo:main-ident} together yield that every convergent subsequence of the sequence $(\mu^N)_N$ of the empirical measures of system \eqref{eq:finite:system} converges in law to the same limit $\bar\mu = \law{\bar X^1 | S^\alpha}$. 
Hence, the whole sequence $(\mu^N)_N$ converges to such a limit, and this concludes the proof of Theorem \ref{theo:main}.

\section{Appendix}\label{sec:appendix}
In this section, we gather several technical auxiliary results that have been used throughout our proofs. Recall that $C_1(\R^d)$ is defined in \cite{JS} VII.2.7.

\begin{prop}\label{prop:cb}
Let $\alpha\in (0,2) \setminus\{1\}.$ Then the following convergence results hold. 

\begin{enumerate}
\item 
For all $G \in C_1 ( \R) $ we have that 
\begin{equation}\label{eq:js1}
\lim_{N\to\infty}N\int_{\R^*}G\left(\frac u{N^{1/\alpha}}\right)\nu(du)=\int_{\R^*}G(y)\nu^{\alpha}(dy).
\end{equation}
\item
In the case $\alpha>1$, for all bounded and continuous truncation functions $ h : \R \to \R $ such that $ h( x) = x $ in a neighborhood of $0,$ we have that 
\[\lim_{N\to\infty}N\int_{\R^*}h\left(\frac u{N^{1/\alpha}}\right)\nu(du)=\int_{\R^*}(h(y)- y) \nu^{\alpha}(dy)\]
and
\[\lim_{N\to\infty}N\int_{\R^*}h^2 \left(\frac u{N^{1/\alpha}}\right)\nu(du)=\int_{\R^*}h^2(y)\nu^{\alpha}(dy).\]
\end{enumerate}
\end{prop}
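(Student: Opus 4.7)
My plan is to rewrite both limits as integrals against a rescaled measure on $\R^*$. Define $\nu_N$ on $\R^*$ by $\nu_N(B) := N \int \indiq_B(u/N^{1/\alpha}) \, \nu(du)$, so that $N \int G(u/N^{1/\alpha}) \nu(du) = \int G \, d\nu_N$. The key preliminary step is the vague convergence $\nu_N \to \nu^\alpha$ on $\R \setminus \{0\}$. By the classical characterization via distribution functions, this reduces to checking, for every $x > 0$, that $\nu_N((x,\infty)) = N(1-F(xN^{1/\alpha}))$ tends to $a_+/(\alpha x^\alpha) = \nu^\alpha((x,\infty))$ and symmetrically on the negative side. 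This follows from the tail asymptotics $1-F(x)+F(-x) \sim x^{-\alpha} L(x)$ together with the prescribed limiting ratios, using that the choice $b_n = n^{1/\alpha}$ in Assumption \ref{ass:nu} forces $L$ to tend to a finite positive constant (so that the scaling exactly cancels); the matching of constants is precisely the normalization $\nu^\alpha$ given in \eqref{eq:nualpha}.

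For Part 1, since any $G \in C_1(\R)$ vanishes on some $[-\eta,\eta]$ and is bounded, I would fix a continuous cutoff $\chi_R$ equal to $1$ on $[-R,R]$ and $0$ outside $[-R-1,R+1]$. Then $G\chi_R$ is continuous with compact support in $\R^*$, so vague convergence yields $\int G\chi_R \, d\nu_N \to \int G\chi_R \, d\nu^\alpha$. For the remainder, $\bigl|\int G(1-\chi_R) \, d\nu_N\bigr| \leq \|G\|_\infty \, \nu_N(\{|y|>R\})$, which is uniformly bounded for large $N$ by $2\|G\|_\infty (a_++a_-)/(\alpha R^\alpha)$ thanks to the tail convergence above; letting $R \to \infty$ closes the argument.

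Part 2 is more delicate because the integrands are not in $C_1$, and this is where I expect the main difficulty. For the first claim, I would use the centering $\int u \, \nu(du) = 0$ (valid since $\alpha > 1$) to rewrite $N \int h(u/N^{1/\alpha}) \nu(du) = \int G \, d\nu_N$ with $G(y) := h(y) - y$. This $G$ vanishes on $[-1,1]$ but grows linearly at infinity. Applying the $\chi_R$-truncation of Part 1 handles $G\chi_R \in C_1$, while the tail is bounded by
\[
 \Bigl|\int G(1-\chi_R) \, d\nu_N\Bigr| \;\leq\; N \int_{|u|>RN^{1/\alpha}} \frac{|u|}{N^{1/\alpha}} \, \nu(du),
\]
which, by the Karamata-type estimate $\int_{|u|>x} |u| \, \nu(du) \sim \tfrac{\alpha}{\alpha-1} x(1-F(x)+F(-x))$, is of order $R^{1-\alpha}$ uniformly in large $N$; this vanishes as $R\to\infty$ precisely because $\alpha > 1$. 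For the $h^2$ statement the obstruction is at the opposite end: $h^2(y) = y^2$ near $0$. I would introduce a continuous cutoff $\phi_\epsilon$ vanishing on $[-\epsilon,\epsilon]$ and equal to $1$ outside $[-2\epsilon,2\epsilon]$, so that $h^2 \phi_\epsilon \in C_1$ and Part 1 applies. The remainder satisfies $0 \leq h^2(1-\phi_\epsilon)(y) \leq y^2 \indiq_{|y|\leq 2\epsilon}$, and
\[
 \int_{|y|\leq 2\epsilon} y^2 \, d\nu_N(y) \;=\; N^{1-2/\alpha} \int_{|u|\leq 2\epsilon N^{1/\alpha}} u^2 \, \nu(du),
\]
which by the companion Karamata estimate $\int_{|u|\leq x} u^2 \, \nu(du) \sim \tfrac{\alpha}{2-\alpha} x^2(1-F(x)+F(-x))$ is of order $\epsilon^{2-\alpha}$ uniformly in $N$; the analogous bound holds for $\nu^\alpha$, and letting $\epsilon \to 0$ concludes. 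The Karamata truncated-moment asymptotics (for which I would cite Feller IX.8) are the two genuinely nontrivial ingredients, and they are what forces the respective constraints $\alpha>1$ and $\alpha<2$ in the two statements.
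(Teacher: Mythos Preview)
Your argument is correct, but it takes a genuinely different route from the paper's. The paper observes that the compound Poisson process $X^N_t = N^{-1/\alpha}\sum_{i=1}^{P^N_t} U_i$ (with $P^N$ a rate-$N$ Poisson process) converges in law to $S^\alpha$ by the stable CLT (Kallenberg, Corollary~16.17), and then invokes Theorem~VII.2.52 of Jacod--Shiryaev: convergence in law of these semimartingales forces convergence of their characteristics $(B^N,\tilde C^N,\nu^N)$ to $(B^\alpha,\tilde C,\nu^\alpha)$. Reading off these characteristics at $t=1$ gives exactly the three limits in the statement, with no direct estimates at all. Your approach instead establishes the vague convergence $\nu_N\to\nu^\alpha$ on $\R^*$ by hand and controls the non-$C_1$ integrands via truncation plus Karamata-type truncated moment asymptotics. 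This is more elementary and self-contained (it only uses classical regular-variation facts from Feller), and it makes transparent why the constraint $\alpha>1$ is needed for the $h$-limit and $\alpha<2$ for the $h^2$-limit; the paper's route is shorter and fits naturally with the Jacod--Shiryaev machinery already used throughout, at the cost of hiding the analytic content inside those references. One minor polishing point: your phrase ``uniformly in large $N$'' for the Karamata remainder bounds should be read as a bound on the $\limsup_N$ (since the asymptotic $\int_{|u|>x}|u|\,\nu(du)\sim\tfrac{\alpha c}{\alpha-1}x^{1-\alpha}$ only kicks in once $x=RN^{1/\alpha}$ is large), but the two-step $\limsup_N$ then $R\to\infty$ (resp.\ $\epsilon\to 0$) structure you describe is exactly right.
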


\begin{proof}
Let $ P^N_t$ be a rate $N$ Poisson process and $ U_n, n \geq 1, $ be i.i.d. $ \sim \nu, $ independent of $ (P^N_t)_t.$ Introduce 
$$ X^N_t = \frac{1}{N^{1/\alpha}}\sum_{i=1}^{P_t^N}  U_i.$$ 
This is a compound Poisson process with jump rate $N$ and with jump distribution $ \int \nu( du ) \delta_{ u / N^{ 1/ \alpha } } (\cdot ). $ Using Corollary 16.17 in \cite {kallenberg}
we know that $ X^N $ converges in law to $ S^\alpha.$ So we are in the setting of condition 2.48, Chapter VII of \cite{JS} such that we may apply Theorem 2.52 of the same chapter. We take the truncation function $ h( x) = 0 $ if $ \alpha < 1 , $ and any fixed continuous and bounded truncation function in case $ \alpha > 1.$ In this setting, the compensator of the jump measure of $ X^N$ is given by 
$$ \nu^N (dt, dx) = N dt  \int \nu( du ) \delta_{ u / N^{ 1/ \alpha } } (dx )$$ 
and, for $ \alpha < 1, $ $ B_t^N = C_t^N = \tilde C_t^N = 0, $ whereas for $ \alpha > 1,$ 
$$ B^N_t  = N t \int  h\left( \frac{u}{N^{1/\alpha }} \right) \nu (du) , \qquad \tilde C^N_t = N t \int  h^2\left( \frac{u}{N^{1/\alpha }} \right) \nu (du) , $$
for all $ t \geq 0.$ It is immediate to check that condition 2.53 of Chapter VII of \cite{JS} holds as well. 
Then item a) of Theorem VII.2.52 \cite{JS} implies that, since we know that we have convergence in law of $ X^N $ to $S^\alpha, $ automatically 
	$$\int_{[0,t]\times \R} G(x) \nu^N(ds,dx) \to \int_{[0,t]\times \R} G(x) ds \nu^\alpha(dx) = t \int_\R G(x) \nu^\alpha ( dx) \qquad \forall \, t \in \R_+,$$
for all $G\in C_1(\R)$ (this is condition $[ \delta_{3.1}\!-\!D] $ with $ D = \R_+ $), since $ds \nu^\alpha (dz) $ is the third characteristics of the canonical triple of $S^\alpha$. 

Now we notice that, when taking $t= 1,$ we have that the expression appearing in the left-hand side of \eqref{eq:js1} equals  
$$N\int_{\R^*} G(u/N^{1/\alpha})\nu(du) =  \int_{[0,1]\times \R} G(x) \nu^N(ds,dx).$$ 
Therefore, we deduce from this the convergence of this expression to 
$ \int G(x)  \nu^\alpha ( dx) ,$ as claimed.

Let us now discuss the case $ \alpha > 1.$ In this case, the convergence in law of $X^N $ to $S^\alpha $ implies also that, thanks to conditions $[ \beta_{3}\!-\!D] $ and $[ \gamma_{3}\!-\!D] $ stated in 2.51 of Chapter VII. of \cite{JS}, we have convergence of $ B^N_t $ and of $ \tilde C^N_t $ to their corresponding limit quantity. These limit quantities are  
$$  B^\alpha_t  =  t \int [  h(u) - u ]  \nu^\alpha  (du) , \qquad \tilde C_t = t \int  h^2 ( u ) \nu^\alpha  (du) . $$
Taking e.g., $ t = 1 $ as in the first part of the proof, the assertion follows. 
\end{proof}
\ec

The following result, showing that weak limits of $ \mu^N$ are supported by trajectories without fixed time of discontinuity, is crucial for our convergence analysis. 
\ec
\begin{lem}\label{lem:supp_mu}
	Grant all assumptions of Theorem \ref{theo:main}.  Let $\mu$ be the weak limit of any convergent subsequence of $(\mu^N)_N$. Then, for any fixed $t \in \R_+$, a.s., 
	\[ 
		\mu(\{ \gamma \in D \, : \, |\Delta \gamma(t)| \neq  0 \}) = 0  . 
	\]
\end{lem}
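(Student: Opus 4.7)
My plan is first to reduce the statement, via exchangeability, to a no-fixed-time-of-discontinuity statement for a single limiting particle, and then to prove this by transferring a uniform-in-$N$ modulus of continuity bound from the pre-limit system. By Proposition \ref{prop:convergence_couple} (with $m=1$), $(X^{N,1}, \mu^N) \convlaw (X^1, \mu)$ in $D \times \mathcal{P}(D)$, where $(X^i)_{i\geq 1}$ is the infinite-exchangeable sequence directed by $\mu$. The directing property gives, for every Borel $A \subset D$, $\E[\mu(A)] = \P(X^1 \in A)$. Applied to $A = \{\gamma : |\Delta\gamma(t)| \neq 0\}$, together with $\mu(A) \geq 0$, it suffices to prove $\P(|\Delta X^1_t| \neq 0) = 0$.

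To this end I would establish the uniform-in-$N$ estimate
\[
\lim_{\delta \to 0} \sup_N \P\!\left(|X^{N,1}_{t+\delta} - X^{N,1}_{t-\delta}| > \epsilon\right) = 0 \qquad (\epsilon > 0),
\]
using the decomposition $X^{N,1} = X^{N,1}_0 + B^{N,1} + I^{N,1} + J^N - E^{N,1}$ from the proof of Proposition \ref{thm:tightness}. The drift satisfies $|B^{N,1}_{t+\delta} - B^{N,1}_{t-\delta}| \leq 2\delta\norme{b}_\infty$, while the moment estimates on $I^{N,1}$ and $E^{N,1}$ already derived in the verification of the Aldous criterion in that proof yield, through Markov's inequality, a bound of order $C_\epsilon \delta$ uniformly in $N$ for the contributions of those two terms.

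The main obstacle is the collateral jump sum $J^N$, since its summands belong only to the domain of attraction of an $\alpha$-stable law. I would handle it by a truncation at level $MN^{1/\alpha}$: jumps above the threshold occur in $[t-\delta,t+\delta]$ with probability at most $2\delta(N-1)\norme{f}_\infty\, \nu(|u|>MN^{1/\alpha}) \leq C\delta/M^{\alpha}$, uniformly in $N$, by the regularly varying tail of $\nu$. For the remaining small-jump part, when $\alpha<1$ I would use the sub-additivity of $|\cdot|^\eta$ for any $\eta \in (\alpha,1]$ together with $\int |u|^\eta \indiq_{\{|u|\leq MN^{1/\alpha}\}}\nu(du) \sim (MN^{1/\alpha})^{\eta-\alpha}$; when $\alpha>1$, the centering of $\nu$ makes the compensated small-jump part a martingale whose $L^2$-increment I bound via its quadratic variation, using $\int u^2 \indiq_{\{|u|\leq MN^{1/\alpha}\}}\nu(du) \sim (MN^{1/\alpha})^{2-\alpha}$. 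In both cases the explicit factor of $N$ coming from the sum over particles precisely cancels with the $N^{1/\alpha}$ normalization, producing a uniform-in-$N$ bound of order $\delta$ (depending on $M$ and $\epsilon$); fixing $M$ large enough in terms of $\epsilon$ finishes the uniform estimate.

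To conclude, I would pick arbitrarily small $\delta > 0$ such that $t\pm\delta$ are continuity points of $X^1$: such $\delta$ exist because the set $\{s : \P(\Delta X^1_s \neq 0) > 0\}$ has zero Lebesgue measure (Fubini, using that the jump times of a c\`adl\`ag trajectory form a countable set). For such $\delta$, the Skorokhod convergence $X^{N,1} \convlaw X^1$ in $D$ yields joint convergence $(X^{N,1}_{t-\delta}, X^{N,1}_{t+\delta}) \convlaw (X^1_{t-\delta}, X^1_{t+\delta})$, so by the Portmanteau theorem on the open set $\{|y|>\epsilon\}$,
\[
\P\!\left(|X^1_{t+\delta} - X^1_{t-\delta}| > \epsilon\right) \leq \liminf_N \P\!\left(|X^{N,1}_{t+\delta} - X^{N,1}_{t-\delta}| > \epsilon\right).
\]
Letting $\delta \to 0$ along a sequence of such continuity points, the right-hand side tends to $0$ by the uniform bound, while the left-hand side tends to $\P(|\Delta X^1_t|>\epsilon)$ by the c\`adl\`ag property of $X^1$. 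Hence $\P(|\Delta X^1_t| > \epsilon) = 0$ for every $\epsilon > 0$, so $\Delta X^1_t = 0$ a.s., which completes the proof.
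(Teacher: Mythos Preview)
Your proof is correct and takes a genuinely different route from the paper's. The paper argues by contradiction at the level of $\mathcal{P}(D)$: assuming $P(\mu(\{|\Delta\gamma(t)|>a\})>b)>0$, it passes to the open set $B^\epsilon_a=\{\gamma:\sup_{s\in(t-\epsilon,t+\epsilon)}|\Delta\gamma(s)|>a\}$ and the open set $\{\mu:\mu(B^\epsilon_a)>b\}\subset\mathcal{P}(D)$, applies Portmanteau, and then bounds $\P(\sup_{s\in(t-\epsilon,t+\epsilon)}|\Delta X^{N,1}_s|>a)$ by splitting into main and collateral jumps. For the collateral part it does \emph{not} truncate: it exploits the already-established functional convergence $\hat J^N=N^{-1/\alpha}\sum_{k\le \tilde P_{N\cdot}}U_k\convlaw S^\alpha$ from \eqref{eq:process_converging_to_Salpha} together with the continuity of $\gamma\mapsto\sup_{s\le T}|\Delta\gamma_s|$, reducing the bound to a tail estimate for the stable process $S^\alpha$ on $[0,2\epsilon\norme{f}_\infty]$.

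Your approach is more elementary in that it avoids invoking the stable functional CLT: the truncation at level $MN^{1/\alpha}$ and the Karamata-type moment estimates (valid here because Assumption~\ref{ass:nu} with $b_n=n^{1/\alpha}$ forces the slowly varying factor $L$ to converge to a constant) give an explicit uniform-in-$N$ bound on $\P(|J^N_{t+\delta}-J^N_{t-\delta}|>\epsilon)$. The reduction via the directing-measure identity $\E[\mu(A)]=\P(X^1\in A)$ is also cleaner than the paper's contradiction-through-open-sets machinery. Two small remarks: (i) for $\alpha>1$ the truncated small-jump part is not centered, so besides the martingale $L^2$ bound you also need to control the residual compensator $\frac{1}{N^{1/\alpha}}\sum_j\int f\,ds\int u\indiq_{\{|u|\le MN^{1/\alpha}\}}\nu(du)$; by centering of $\nu$ this equals minus the large-jump first moment and is $O(\delta M^{1-\alpha})$ uniformly in $N$; (ii) in the last step you only need $\P(|\Delta X^1_t|>\epsilon)\le\liminf_\delta\P(|X^1_{t+\delta}-X^1_{t-\delta}|>\epsilon)$ (Fatou), not full convergence of the left-hand side. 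Neither affects the validity of your argument.
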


\begin{proof}
Throughout this proof we write $ P^N = \mathcal L ( \mu^N) $  and $P = \mathcal L ( \mu). $ The proof is by contradiction. 
So let us suppose  that there exists $t >0$ such that 
\[
    P( \{  \mu \in \cP ( D) :   \mu(\{ \gamma \in D \, : \, |\Delta \gamma(t)| \neq  0 \} ) \}  ) > 0 . 
\]
This implies that there exist $a,\,b >  0$ such that the event $E \coloneqq \{ \mu \in \cP ( D) :  \mu(\{ \gamma \in D \, : \, |\Delta \gamma(t)| > a \} ) > b \}$ has positive probability, that is, 
$$ P ( E) > 0.$$  

Now, fix any $\epsilon >0$ 
and define the set of paths $B^\epsilon_a \in \mathcal{B}(D)$ as
\[
	B^\epsilon_a \coloneqq \{ \gamma \in D : \sup_{s \in (t-\epsilon, t+\epsilon)} |\Delta \gamma(s)| > a \} \subset D.
\]
By construction, $B^\epsilon_a$ is an open subset of $D,$ endowed with the Skorokhod topology, which can be seen as follows. 
Let $\gamma  \in B^\epsilon_a. $ Then there exist $ s \in (t-\epsilon, t+\epsilon)$ and $\eta >0$ such that $|\Delta \gamma ( s)| \geq a + \eta .$ Consider now a sequence $\gamma^n \to \gamma $ in $D$. Then, by Proposition VI.2.1a) in \cite{JS}, there exists a sequence $s^n \to  s$ such that $\Delta \gamma^n(s^n) \to \Delta \gamma ( s).$ So there exists some $n_0$ such that $ s^n \in (t- \epsilon, t+ \epsilon) $  and  $ | \Delta \gamma^n (s^n ) |\geq a  $ for all $ n \geq n_0. $ In particular,  $\gamma^n \in B^\epsilon_a$ for all $n \geq n_0,$ whence the claim.   

Clearly, $\{ \gamma \in D : |\Delta \gamma(t)| > a \} \subset B^\epsilon_a,$ which implies that $E \subset \{ \mu \in \cP ( D) : \mu(B^\epsilon_a) > b\} \eqqcolon \mathcal{P}^\epsilon_{a,b} ,$ where the latter set is open in $\mathcal{P}(D)$ (endowed with the topology of weak convergence). 
Indeed, let $\pi \in \mathcal{P}^\epsilon_{a,b}$ and consider any sequence  $(\pi_n)_n\subset \mathcal{P}(D)$ weakly converging to $\pi$. Then the Portmanteau theorem implies that, for the open set $B^\epsilon_a \subset D$, $\liminf_{n}\pi_n(B^\epsilon_a ) \geq \pi(B^\epsilon_a ) > b$, where the last inequality follows since $ \pi \in \mathcal{P}^\epsilon_{a,b}. $ Therefore, $\liminf_{n}\pi_n(B^\epsilon_a ) > b$, implying  that there exists $n_0 >0$ s.t. $\pi_n \in \mathcal{P}^\epsilon_{a,b} $ for all $n \geq n_0$, hence the thesis.

Recall that  $ P^N = \mathcal L ( \mu^N) $  and $P = \mathcal L ( \mu). $ Since $(P^N)_N$ converges weakly to $P , $ applying once more the Portmanteau theorem, 
\[
	\liminf_{N\to +\infty} P^N(\mathcal{P}^\epsilon_{a,b}) \geq P(\mathcal{P}^\epsilon_{a,b}),
\]
and the fact that $E \subset \mathcal{P}^\epsilon_{a,b}$ established above allows to conclude that 
\[
	\liminf_{N\to +\infty} P^N(\mathcal{P}^\epsilon_{a,b}) \geq P(\mathcal{P}^\epsilon_{a,b}) = \P(\{ \mu \, : \,  \mu \in \mathcal{P}^\epsilon_{a,b} \} )\geq \P(E) > 0 .
\]
We stress that $\P(E)$ does not depend on $\epsilon$, by definition of $E$.

In what follows, we will prove that, for some constant $C$ that does not depend on $N$ nor on $\epsilon , $
\begin{equation}\label{eq:contrad}
	\limsup_{N\to+\infty}  P^N(\mathcal{P}^\epsilon_{a,b}) \leq C \epsilon , 
\end{equation}
for all $N.$ 

Once this is proven, it is easily seen that this yields a contraction. 
Indeed, if this holds, then for any $\epsilon>0$, 
\[
	0 < \P(E) \leq \liminf_{N\to +\infty} P^N(\mathcal{P}^\epsilon_{a,b}) \leq \limsup_{N\to +\infty} P^N(\mathcal{P}^\epsilon_{a,b}) \leq C \epsilon. 
\]
Letting $ \epsilon \to 0, $ this yields a contraction.  

It remains to show that \eqref{eq:contrad} holds. Notice that 
$$\mu^N(B^\epsilon_a) = \frac{1}{N}\sum_{i=1}^N \indiq_{\{ \sup_{s\in (t-\epsilon, t+\epsilon)} |\Delta X^{N,i}_s| > a \}}.$$ 
So, by Markov's inequality and using exchangeability, 
\begin{multline}\label{eq:contrad_step1}
		P^N(\mathcal{P}^\epsilon_{a,b})  = \P(\mu^N(B^\epsilon_a) > b) \leq \frac{\E[\mu^N(B^\epsilon_a)]}{b}\\
		 = \frac{\E[\indiq_{\{ \sup_{s\in (t-\epsilon, t+\epsilon)} |\Delta X^{N,1}_s| > a  \} }]}{b} = \frac{\P(\sup_{s\in (t-\epsilon, t+\epsilon)} |\Delta X^{N,1}_s| > a )}{b} .
\end{multline}
Recall the notation introduced in \eqref{eq:decxni} above. In what follows, we call the jumps of $ I^{N,1}$ the main jumps of particle $1$ and the jumps of $J^{N}  - E^{N,1}$ the collateral jumps of it. Clearly, the random variable $ \sup_{s\in (t-\epsilon, t+\epsilon)} |\Delta X^{N,1}_s| $ can be larger than $a$ only if either the largest of the (finite number of) main jumps during $(t-\epsilon, t+\epsilon)$ exceeds $a$ or if the largest of (the finite number of) its collateral jumps during that same interval exceeds $a. $ 
So
\begin{multline}\label{eq:contrad_step2}
	\P(\sup_{s\in (t-\epsilon, t+\epsilon)} |\Delta X^{N,1}_s|   > a) \leq \P ( \sup_{s\in (t-\epsilon, t+\epsilon)} |\Delta I^{N,1}_s| > a) \\
	+ \P ( \sup_{s\in (t-\epsilon, t+\epsilon)} |\Delta (J^{N} - E^{N, 1 })_s| > a  ) \eqqcolon p_1 + p_2.  
\end{multline}

Notice that the number of main jumps of $X^{N,1}$ during $(t-\epsilon, t+\epsilon)$  is stochastically dominated by a Poisson random variable with parameter $\norme{f}_\infty 2 \epsilon .$ So we can upper bound $p_1$ by the probability that $X^{N,1}$ has at least one main jump (regardless of its size) during $(t-\epsilon, t+\epsilon)$ and this latter by the probability that the dominating Poisson random variable equals at least $1,$ which yields 
\begin{equation}\label{eq:contrad_Pmj}
	p_1  \leq 1 - e^{2\epsilon \norme{f}_\infty} \leq 2\epsilon \norme{f}_\infty. 
\end{equation}

To control $p_2$ we start by observing that the number of collateral jumps of any particle in $(t-\epsilon, t+\epsilon)$ is stochastically dominated by a Poisson random variable having parameter $2\epsilon N\norme{f}_\infty,$ that is, by the number of jumps of $\tilde P_{N\cdot}$ during an interval $[0, 2\epsilon \norme{f}_\infty]$, where $\tilde P_\cdot$ is a Poisson process of unit rate which is independent of the i.i.d. sequence of jump heights $ (U_n)_{ n \geq 1}, $ with $ U_n \sim \nu.$ So, writing 
$$ \hat J^N_s = \sum_{k=1}^{\tilde P_{N s}} \frac{U_k}{N^{1/\alpha}} ,$$ 
we have that 
$$ 
	p_2 \leq \P \left( \sup_{s \in [0, 2 \epsilon \norme{f}_\infty]} \left| \Delta \hat J^N_s\right| > a \right)  = \E\left[  \indiq_{\left\{\sup_{s \in [0, 2 \epsilon \norme{f}_\infty]} \left| \Delta \hat J^N_s\right| > a   \right\}}\right].
$$
Let  $\phi_a(x)$ be a continuous bounded function which is $0$ for $|x|< a/2$ and $1$ for $|x|\geq a$, and takes values in $(0,1)$ in between. Then
$$ 
	\E\left[  \indiq_{\left\{ \sup_{s \in [0, 2 \epsilon \norme{f}_\infty]} \left| \Delta \hat J^N_s \right| > a  \right\}}\right] \leq  \E\left[  \phi_a\left( \sup_{s \in [0, 2 \epsilon \norme{f}_\infty]} \left|  \Delta \hat J^N_s \right| \right) \right] .
$$
We know, recalling \eqref{eq:process_converging_to_Salpha}, that we have convergence in law of  $ (\hat J^N_t)_t \to (S^\alpha_t)_t. $ Now, since $D \ni \gamma \mapsto \sup_{s\leq T} |\Delta \gamma_s |$ is a continuous function at any $\gamma$ such that $T$ is not a point of discontinuity of $\gamma$ (see Proposition VI.2.4 in \cite{JS}), and since $S^\alpha$ is a.s. continuous at $T= 2\epsilon \norme{f}_\infty$, this implies that 
$$
	\limsup_{N\to +\infty} p_2 \leq \lim_{N\to +\infty}\E\left[  \phi_a\left( \sup_{s \in [0, 2 \epsilon \norme{f}_\infty]} \left| \Delta \hat J^N_s \right| \right) \right] =  \E\left[ \phi_a\left( \sup_{s \in [0, 2 \epsilon \norme{f}_\infty]} \left| \Delta S^\alpha_s \right| \right) \right] .
$$
Since, by definition of $\phi_a$, $\phi_a(x) \leq \indiq_{\{x\geq a/2\}}$ for all $x \geq 0,$ we upper bound further 
\begin{equation*}
	\begin{split}
		\limsup_{N\to +\infty} p_2 & \leq \P\left(  \sup_{s \in [0, 2 \epsilon \norme{f}_\infty]} \left| \Delta S^\alpha_s \right| \geq a/2 \right) \\
		& = \P \left( \int_{[0, 2\epsilon\norme{f}_\infty]} \int_{|z| \geq a/2} |z| M(ds,dz) \geq 1 \right) .
	\end{split}
\end{equation*}
Taking $q<\alpha$ in the case $\alpha<1$ and $q=1$ in the case $\alpha>1$, by Markov's inequality and sub-additivity in the case $ q < 1, $ 
\begin{equation*}
	\begin{split}
	& \P \left( \int_{[0, 2\epsilon\norme{f}_\infty]} \int_{|z| \geq a/2} |z| M(ds,dz) \geq 1 \right) = \P \left( \left(\int_{[0, 2\epsilon\norme{f}_\infty]} \int_{|z| \geq a/2} |z| M(ds,dz)\right)^q \geq 1 \right) \\
	& \leq \E\left[ \left(\int_{[0, 2\epsilon\norme{f}_\infty]} \int_{|z| \geq a/2} |z| M(ds,dz)\right)^q  \right] \leq \E \left[\int_{[0, 2\epsilon\norme{f}_\infty]} \int_{|z| \geq a/2} |z|^q M(ds,dz) \right]  =  C_q \epsilon  ,
	\end{split}
\end{equation*} 
for some constant $C_q $ that does not depend on $ \epsilon.$ As a consequence, 
\begin{equation}\label{eq:contrad_Pcj}
	\limsup_{N\to +\infty} p_2  \leq C_q \epsilon .
\end{equation}

Putting together \eqref{eq:contrad_step1}, \eqref{eq:contrad_step2}, \eqref{eq:contrad_Pmj} and \eqref{eq:contrad_Pcj}, we obtain \eqref{eq:contrad}, which concludes the proof. 

\end{proof}

\subsection{Auxiliary lemmas}
Recall that $ D$ is endowed with the Skorokhod topology, and that $ C$ and $ \Lambda$ are endowed with the topology of locally uniform convergence. Product spaces will always be endowed with the product topology. 

\begin{lem}\label{lem:continuity_of_composition}
The map $\Phi : D \times \Lambda^{ \lambda_1, \lambda_2}  \to D, \, (\gamma, \eta) \mapsto \gamma \circ \eta$, that applies a time change of $\Lambda^{\lambda_1, \lambda_2}$ to a c{\`a}dl{\`a}g path, is continuous.
\end{lem}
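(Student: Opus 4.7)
The plan is to take a converging sequence $(\gamma^n, \eta^n) \to (\gamma, \eta)$ in $D \times \Lambda^{\lambda_1, \lambda_2}$ and to build, from an adjusting time change for $\gamma^n \to \gamma$, a new time change in $\Lambda$ that witnesses $\gamma^n \circ \eta^n \to \gamma \circ \eta$ in the Skorokhod sense, using the criterion recalled in Section \ref{sec:notation}.

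First, I would invoke the Skorokhod characterization to obtain $\lambda^n \in \Lambda$ such that, as $n \to \infty$ and for every $T > 0$,
\[
    \sup_{t \le T} |\lambda^n(t) - t| \to 0, \qquad \sup_{t \le T} |\gamma^n(\lambda^n(t)) - \gamma(t)| \to 0.
\]
Then I would establish the auxiliary fact that $(\eta^n)^{-1} \to \eta^{-1}$ uniformly on compact sets: since $\eta^n \in \Lambda^{\lambda_1, \lambda_2}$, each $\eta^n$ is a bi-Lipschitz homeomorphism of $\R_+$ with inverse of Lipschitz constant at most $1/\lambda_1$. Writing $s^n := (\eta^n)^{-1}(t)$ and $s := \eta^{-1}(t)$ one has $\eta^n(s^n) = \eta(s)$, hence $\lambda_1 |s^n - s| \le |\eta^n(s) - \eta(s)|$, which gives uniform convergence on any compact image of $\eta^{-1}$ from the locally uniform convergence $\eta^n \to \eta$.

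The key construction is then
\[
    \tilde\lambda^n := (\eta^n)^{-1} \circ \lambda^n \circ \eta,
\]
which belongs to $\Lambda$ because it is a composition of strictly increasing, continuous maps sending $0$ to $0$ and $+\infty$ to $+\infty$. By the previous step and the continuity of composition for uniform convergence (with the bi-Lipschitz control provided by $\Lambda^{\lambda_1, \lambda_2}$), $\tilde\lambda^n \to \eta^{-1} \circ \eta = \mathrm{id}$ locally uniformly. Moreover, the crucial algebraic identity
\[
    \gamma^n \circ \eta^n \circ \tilde\lambda^n = \gamma^n \circ \lambda^n \circ \eta
\]
holds, and on any $[0, T]$ the right-hand side converges uniformly to $\gamma \circ \eta$: indeed $\eta([0,T]) \subset [0, \lambda_2 T]$ is compact, and $\gamma^n \circ \lambda^n \to \gamma$ uniformly on that compact set, so composing on the right with the continuous map $\eta$ preserves uniform convergence. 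Applying again the Skorokhod characterization, this yields $\gamma^n \circ \eta^n \to \gamma \circ \eta$ in $D$, proving continuity of $\Phi$.

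The only genuinely technical point is the uniform convergence $(\eta^n)^{-1} \to \eta^{-1}$, which is exactly where the uniform Lipschitz lower bound $\lambda_1 > 0$ built into $\Lambda^{\lambda_1, \lambda_2}$ is used; without it the inverses could become arbitrarily steep and the argument would fail. Everything else reduces to standard manipulations of uniform limits under composition.
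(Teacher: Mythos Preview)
Your proof is correct and follows essentially the same route as the paper: the same candidate time change $\tilde\lambda^n = (\eta^n)^{-1} \circ \lambda^n \circ \eta$, the same algebraic identity $\gamma^n \circ \eta^n \circ \tilde\lambda^n = \gamma^n \circ \lambda^n \circ \eta$, and the same use of the uniform Lipschitz bounds from $\Lambda^{\lambda_1,\lambda_2}$. The only cosmetic difference is that the paper bounds $|\tilde\lambda^N(t)-t|$ directly via $|(\eta^N)^{-1}(\lambda^N(\eta(t))) - (\eta^N)^{-1}(\eta^N(t))| \le \tfrac{1}{\lambda_1}|\lambda^N(\eta(t)) - \eta^N(t)|$, whereas you first isolate the convergence $(\eta^n)^{-1}\to\eta^{-1}$ as an auxiliary fact; both unwind to the same estimate.
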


\begin{proof}
Consider any sequence $(\gamma^N, \eta^N)_N \subset D \times \Lambda^{ \lambda_1, \lambda_2}$ such that $(\gamma^N, \eta^N)_N \to (\gamma,\eta) \in D\times \Lambda^{ \lambda_1, \lambda_2} $. 
In particular, by Theorem VI.1.14(a) of \cite{JS}, there exists a sequence $(\lambda^N)_N \subset \Lambda$ of time changes such that 
$\sup_{s \leq T } | \gamma^N( \lambda^N(s)) - \gamma(s)| \to 0$ for all $T >0$  and $\sup_{s \leq T}|  \lambda^N(s) - s|  \to 0$ for all $T>0$. 

We want to show that, as $N\to +\infty$, $ \gamma^N\circ \eta^N$ converges to $ \gamma\circ\eta  $ in the Skorokhod topology.   
By Lemma VI.1.44 in \cite{JS}, this reduces to find a sequence of time changes $(\tilde\lambda^N)_N \subset \Lambda$ such that $\sup_{s\leq T} | \tilde \lambda^N(s) - s | \to 0$ for all $T>0$ as $N\to +\infty$, and $\sup_{s\leq T} |\gamma^N \circ \eta^N \circ \tilde \lambda^N(s) - \gamma\circ \eta(s) | \to 0$ for all $T>0$. 

In what follows we check that this holds for the sequence $\tilde \lambda^N \coloneqq (\eta^N)^{-1} \circ \lambda^N \circ \eta$. 
Indeed, $\tilde\lambda^N\in \Lambda$ for each $N$ since $\Lambda$ is closed under inversion and composition, and $\gamma^N \circ \eta^N \circ \tilde \lambda^N = \gamma^N \circ \lambda^N \circ \eta \to \gamma \circ \eta$ in the sup norm over any compact interval, since  $\sup_{s \leq T } | \gamma^N( \lambda^N( \eta (s))) - \gamma(\eta(s))| \le \sup_{ s \le \lambda_2 T }  | \gamma^N( \lambda^N( s)) - \gamma(s)| \to 0$ for all $T>0$.


On the other hand, 
\begin{align}\label{eq:tildelambdaN_step1}
	|\tilde \lambda^N(t) - t| & = |(\eta^N)^{-1} \circ \lambda^N (\eta(t)) - t | = |(\eta^N)^{-1} \circ \lambda^N (\eta(t)) - (\eta^N)^{-1} \circ \eta^N(t) | .
\end{align}
Since  $\eta^N \in \Lambda^{\lambda_1, \lambda_2}$, we have for all $0 \le  s<t  < \infty, $ 
\[
	\frac{t-s}{\eta^N(t) - \eta^N(s)} \in \left[ \frac{1}{\lambda_2}, \frac{1}{\lambda_1} \right]  .
\]
In particular, choosing $t = (\eta^N)^{-1}(x) $ and $s =   (\eta^N)^{-1}(y) $ for $y < x, $ 
\[	
	 (\eta^N)^{-1}(x) -  (\eta^N)^{-1}(y) \leq  \frac{1}{\lambda_1} (x-y) \mbox{ for all } 0 \le  y<x < \infty  .
\]
Therefore, coming back to \eqref{eq:tildelambdaN_step1}, 
\begin{align*}
	|\tilde \lambda^N(t) - t| & \leq  \frac{1}{\lambda_1} | \lambda^N (\eta(t)) - \eta^N(t) | \leq  \frac{1}{\lambda_1} \left( | \lambda^N (\eta(t)) - \eta(t) | + | \eta(t) - \eta^N(t) | \right) ,
\end{align*}
and, taking the supremum over $[0,T]$ on both sides, we obtain
\begin{multline*}
	\sup_{t\leq T}|\tilde \lambda^N(t) - t|  \leq  \frac{1}{\lambda_1}  \sup_{t\leq T} | \lambda^N (\eta(t)) - \eta(t) | +  \frac{1}{\lambda_1}  \sup_{t\leq T} | \eta(t) - \eta^N(t) |  \\
	\leq \frac{1}{\lambda_1}  \sup_{t\leq \lambda_2 T} | \lambda^N (t)- t | +  \frac{1}{\lambda_1}  \sup_{t\leq T} | \eta(t) - \eta^N(t) | ,
\end{multline*}
and this converges to zero for any $T>0$ due to uniform convergence on all compacts of $\lambda^N$ to $id$ and of $\eta^N$ to $\eta.$ 
\end{proof}

\begin{lem}\label{lem:app2}
The map 
\[
	\mathcal{P}(D) \to \Lambda^{\underline f , \norme{f}_\infty}  , \, m \mapsto \int_0^\cdot m_s(f) ds 
\]
is continuous at any point $ \tilde m $ for which  $\tilde m (\{ \gamma \in D \, | \, \gamma(s) \neq \gamma(s^-)\})=0,$ for any fixed $ s\geq 0.$ 
\end{lem}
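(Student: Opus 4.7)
The plan is to exploit the fact that the target space $\Lambda^{\underline f,\|f\|_\infty}$ inherits the topology of locally uniform convergence from $C$, together with the equi-Lipschitz nature of any candidate limit curve, so that it is enough to establish pointwise convergence in the time variable and then upgrade to uniform-on-compacts via Arzel\`a--Ascoli.

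First, I would take any sequence $(m^N)_N \subset \mathcal{P}(D)$ with $m^N \to \tilde m$ weakly and show that for every fixed $s \geq 0$,
\[
m^N_s(f) = \int_D f(\gamma(s))\, m^N(d\gamma) \; \longrightarrow \; \int_D f(\gamma(s))\, \tilde m(d\gamma) = \tilde m_s(f).
\]
The key observation is that the evaluation map $D \ni \gamma \mapsto \gamma(s)$ is continuous at every $\gamma$ which is continuous at time $s$ (see e.g.\ Proposition VI.2.1 in \cite{JS}), hence $\gamma \mapsto f(\gamma(s))$ is a bounded function which is continuous on the set $\{\gamma \in D : \gamma(s)=\gamma(s^-)\}$. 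By the hypothesis on $\tilde m$, this set has $\tilde m$-measure one, so the Portmanteau theorem applied to the bounded continuous (on a set of full $\tilde m$-measure) functional yields the desired pointwise convergence. This is the step where the hypothesis on $\tilde m$ is essential; it is the only delicate point of the argument.

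Next, for each fixed $t \geq 0$, since $|m^N_s(f)| \leq \|f\|_\infty$ uniformly in $N$ and $s$, bounded convergence gives
\[
\int_0^t m^N_s(f)\, ds \; \longrightarrow \; \int_0^t \tilde m_s(f)\, ds,\qquad N\to\infty.
\]
To pass from pointwise to locally uniform convergence, I would note that for each $N$ and any $0\le u\le v$,
\[
\underline f\,(v-u) \;\leq\; \int_u^v m^N_s(f)\, ds \;\leq\; \|f\|_\infty\,(v-u),
\]
using Assumption \ref{ass:f}; the same bounds hold for the candidate limit. Thus the sequence of primitives $t \mapsto \int_0^t m^N_s(f)\,ds$ is uniformly Lipschitz with constant $\|f\|_\infty$, hence equicontinuous, and pointwise convergence of an equicontinuous family of functions on $\R_+$ implies uniform convergence on every compact interval $[0,T]$.

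Finally, the uniform Lipschitz bounds above show that the limit curve indeed belongs to $\Lambda^{\underline f,\|f\|_\infty}$, so that $\int_0^\cdot m^N_s(f)\,ds \to \int_0^\cdot \tilde m_s(f)\,ds$ in the topology of $\Lambda^{\underline f,\|f\|_\infty}$, proving continuity of the map at $\tilde m$. The main (and essentially only) obstacle is the Portmanteau step: one must use crucially the hypothesis that $\tilde m$ charges no trajectory having a fixed-time discontinuity at $s$, which is precisely what Lemma \ref{lem:supp_mu} provides for the weak limits $\mu$ of $(\mu^N)_N$ in the application.
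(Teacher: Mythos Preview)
Your argument is correct and follows essentially the same route as the paper: both establish $m^N_s(f)\to \tilde m_s(f)$ for each fixed $s$ using that the projection $\gamma\mapsto \gamma(s)$ is $\tilde m$-a.s.\ continuous (the paper cites Chapter~13 of \cite{billingsley} for the continuity of $m\mapsto m_s$, you invoke Portmanteau directly on $\gamma\mapsto f(\gamma(s))$), and then integrate using dominated/bounded convergence. Your proof is in fact more explicit than the paper's, which simply says ``the assertion then follows from dominated convergence'' without spelling out the equicontinuity step needed to upgrade pointwise convergence of $t\mapsto\int_0^t m^N_s(f)\,ds$ to locally uniform convergence; your use of the uniform Lipschitz bound to close this gap is a welcome clarification.
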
 

\begin{proof}
By Chapter 13 of \cite{billingsley}, the projection map $\pi_s : m \mapsto  m_s$ is continuous at $ \tilde m. $  As a consequence, if $ m^n $ converges weakly to $ \tilde m$ in $\cP ( D),$  then also  $ m^n_s $ converges weakly to $ \tilde m_s $ in $ \cP ( \R) .$ Since $f$ is continuous and bounded, the definition of weak convergence implies that   $m^n_s ( f) \to \tilde m_s ( f).$ The assertion then follows from dominated convergence. 
\end{proof}

\begin{lem}\label{lem:app4}
Let $g:\R^2\to \R$ be continuous and bounded and let $ \nu $ be a probability measure on $ \R^* .$ 
Let $(\gamma, m )\in D \times  \mathcal{P}(D) $ and denote by  $(\gamma_s, m_s)$ its projection on 
$\R\times \mathcal{P}(\R)$. Assume moreover that  for any fixed $t \in \R_+$,  
	\[ 
		m(\{ \gamma \in D \, : \, |\Delta \gamma(t)| \neq   0 \}) = 0  .\]
Suppose that $(\gamma^N, m^N )\to (\gamma, m)$ in $D \times  \mathcal{P}(D)$, where $\mathcal{P}(D)$ is endowed with the topology of weak convergence. 
Then we have the convergence in $C$  
\begin{equation*}
	 \int_0^\cdot f(\gamma^N_s) \int_{\R^\ast} \nu(du)  g(\psi(\gamma^N_s, m^N_s),  u/N^{1/\alpha}) ds  \to  \int_0^\cdot f(\gamma_s)   g(\psi(\gamma_s, m_s),  0) ds .
\end{equation*}
\end{lem}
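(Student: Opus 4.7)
\medskip
\noindent\textbf{Proof plan.} The plan is to establish pointwise a.e.\ convergence of the integrands and then upgrade it to uniform convergence on compact intervals by means of a uniform bound. Fix $T > 0$ and set
\begin{equation*}
	h^N(s) \coloneqq f(\gamma^N_s) \int_{\R^*} g(\psi(\gamma^N_s, m^N_s), u/N^{1/\alpha}) \nu(du), \qquad h(s) \coloneqq f(\gamma_s) g(\psi(\gamma_s, m_s), 0) .
\end{equation*}
Both functions are uniformly bounded by $\norme{f}_\infty \norme{g}_\infty$, since $\nu$ is a probability measure on $\R^\ast$.

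The first step is to show that $h^N(s) \to h(s)$ for every $s$ outside the (at most countable) set $D(\gamma)$ of discontinuities of $\gamma$. For such $s$: the Skorokhod convergence $\gamma^N \to \gamma$ gives $\gamma^N_s \to \gamma_s$; the hypothesis $m(\{\gamma : \gamma(s) \neq \gamma(s^-)\}) = 0$ together with Lemma \ref{lem:app2} (whose proof exploits continuity of the projection $m \mapsto m_s$ at such $m$) yields $m^N_s \to m_s$ weakly in $\mathcal{P}(\R)$; continuity of $f$, $\psi$ and $g$ (the latter two via Assumption \ref{ass:weak_cont}(a)) then yield $f(\gamma^N_s) \to f(\gamma_s)$ and $\psi(\gamma^N_s, m^N_s) \to \psi(\gamma_s, m_s)$, hence for each $u$, $g(\psi(\gamma^N_s, m^N_s), u/N^{1/\alpha}) \to g(\psi(\gamma_s, m_s), 0)$. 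Since $g$ is bounded, dominated convergence applied to the integral against $\nu$ gives $\int_{\R^\ast} g(\psi(\gamma^N_s, m^N_s), u/N^{1/\alpha}) \nu(du) \to g(\psi(\gamma_s, m_s), 0)$, and combining with $f(\gamma^N_s) \to f(\gamma_s)$ yields the claim.

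The second step promotes this pointwise convergence to uniform convergence on $[0,T]$. Since the integrals of $h^N$ and $h$ differ by at most the $L^1$-norm of $h^N - h$,
\begin{equation*}
	\sup_{t \le T} \left| \int_0^t h^N(s) ds - \int_0^t h(s) ds \right| \le \int_0^T |h^N(s) - h(s)| ds ,
\end{equation*}
and the right-hand side tends to $0$ by dominated convergence (pointwise a.e.\ convergence together with the uniform bound $2\norme{f}_\infty \norme{g}_\infty$). Since $T$ is arbitrary, this gives convergence in the topology of locally uniform convergence, i.e.\ in $C$.

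I do not anticipate a serious obstacle here: the continuity of the coefficients in the product topology (Assumption \ref{ass:weak_cont}(a)) is exactly tailored to this kind of passage to the limit, and the only subtle input is the continuity of the projections $m \mapsto m_s$ at points $m$ with no fixed time of discontinuity, which is supplied by Lemma \ref{lem:app2}.
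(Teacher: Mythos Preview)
Your proof is correct and follows essentially the same approach as the paper: establish pointwise convergence of the integrand for Lebesgue-a.e.\ $s$ (using continuity of $\gamma \mapsto \gamma_s$ at continuity points of $\gamma$, continuity of $m \mapsto m_s$ at measures with no fixed time of discontinuity, and continuity of $f,\psi,g$), then conclude by dominated convergence. Your version is in fact slightly more explicit than the paper's, separating the dominated convergence over $\nu$ from that over $[0,T]$.
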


\begin{proof}
We have joint continuity of the projections $\pi_s : D\times \mathcal{P}(D) \to \R\times \mathcal{P}(\R), (\gamma, m) \mapsto (\gamma_s, m_s)$ for Lebesgue almost all $ s \geq 0, $ at $(\gamma, m)$ for any $\gamma\in D.$ 
This follows since we have the product topology on the spaces $D\times \mathcal{P}(D)$, $\R\times \mathcal{P}(\R)$ and since
\begin{enumerate}
	\item for any $s$ except a set of zero Lebesgue measure, $\pi_s$ is continuous in the first component at any $\gamma\in D$ (indeed, the mapping $\gamma \mapsto \gamma_s$ is continuous at $\gamma$ if and only if $\gamma$ is continuous at $s$, see e.g. \cite{JS} VI.2.3, and $\gamma $ possesses at most a countable number of jumps;)
	\item for any fixed $s,$ $\pi_s$ is continuous in the second component at $m$ by assumption on $m$ (see Chapter 13 of \cite{billingsley}).
\end{enumerate}
Since $f$, $\psi$ and $g$ are continuous and bounded, recalling Assumption \ref{ass:weak_cont},  we have that  $f(\gamma^N_s)g(\psi(\gamma^N_s, m^N_s), u/N^{1/\alpha})\to f(\gamma_s) g(\psi(\gamma_s,m_s),  0)$ for fixed $u$ and for Lebesgue almost all $ s.$ 
The proof is concluded using dominated convergence.
\end{proof}

\begin{lem}\label{lem:app5}
\[
	 (\gamma, m) \mapsto \int_0^\cdot b(\gamma_s, m_s) ds 
\]
is continuous from $D\times \mathcal{P}(D)$ to $C,$ at any point $ ( \gamma , m ) $ such that for any fixed $t \in \R_+$,  
	\[ 
		m(\{ \gamma \in D \, : \, |\Delta \gamma(t)| \neq  0 \}) = 0  . 
	\]
\end{lem}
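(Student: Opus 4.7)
The plan is to combine two ingredients: pointwise convergence of the integrands for Lebesgue-almost every time $s$, which follows from the continuity of the projection $\pi_s$ at the limit point together with the weak continuity of $b$, and uniform equicontinuity of the primitives $t \mapsto \int_0^t b(\gamma_s, m_s)\,ds$, which follows from the boundedness of $b$.

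Concretely, I would start from an arbitrary sequence $(\gamma^N, m^N) \to (\gamma, m)$ in $D \times \mathcal{P}(D)$ and fix any compact $[0,T]$. As in the proof of Lemma \ref{lem:app4}, the projection $\pi_s : D \times \mathcal{P}(D) \to \R \times \mathcal{P}(\R), (\gamma', m') \mapsto (\gamma'_s, m'_s)$ is continuous at $(\gamma, m)$ for every $s$ outside the (at most countable) set of discontinuity points of $\gamma$, because: on the first coordinate $\pi_s$ is continuous at $\gamma$ iff $s$ is a continuity point of $\gamma$ (see \cite{JS} VI.2.3); on the second coordinate $\pi_s$ is continuous at $m$ thanks to the hypothesis $m(\{\gamma' : \Delta\gamma'(s) \neq 0\}) = 0$ (see Chapter 13 of \cite{billingsley}). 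Hence, for Lebesgue-almost every $s$, $(\gamma^N_s, m^N_s) \to (\gamma_s, m_s)$ in $\R \times \mathcal{P}(\R)$, and by Assumption \ref{ass:weak_cont}\textit{a)} the continuity of $b$ with respect to the product topology yields $b(\gamma^N_s, m^N_s) \to b(\gamma_s, m_s)$ for Lebesgue-almost every $s \in [0,T]$.

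Next I would upgrade this to uniform convergence on $[0,T]$ of the primitives. Define $F^N(t) := \int_0^t b(\gamma^N_s, m^N_s)\,ds$ and $F(t) := \int_0^t b(\gamma_s, m_s)\,ds$. By the boundedness of $b$ (Assumption \ref{ass:b}\ref{itm:bbdd}), all integrands are dominated by $\|b\|_\infty$, so by dominated convergence $F^N(t) \to F(t)$ for every $t \in [0,T]$. Moreover, each $F^N$ is $\|b\|_\infty$-Lipschitz, as is $F$, so the family $(F^N)_N \cup \{F\}$ is equicontinuous and uniformly bounded on $[0,T]$. Pointwise convergence together with equicontinuity on a compact interval implies uniform convergence (a direct $\varepsilon/3$ argument: pick a finite $\delta$-net of $[0,T]$ with $\delta < \varepsilon/(3\|b\|_\infty)$, use pointwise convergence on the net to control large $N$, and equicontinuity to fill the gaps). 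Since $T$ is arbitrary, $F^N \to F$ in $C$, which is exactly the required continuity.

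There is essentially no hard step here: the work is entirely in assembling standard facts (continuity of projections at non-jump times, weak continuity of $b$, boundedness yielding equicontinuity, equicontinuity plus pointwise convergence giving uniform convergence on compacts). If anything, the one point that must not be glossed over is the joint continuity of $\pi_s$ at $(\gamma, m)$ for a.e.\ $s$, which combines two separate regularity statements — continuity of $\gamma$ at $s$ for the first coordinate, and the hypothesis on $m$ for the second coordinate — and justifies transferring convergence from $D \times \mathcal{P}(D)$ to $\R \times \mathcal{P}(\R)$ slice by slice.
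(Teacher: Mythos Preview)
Your proof is correct and follows essentially the same route as the paper: joint continuity of the projections $\pi_s$ at $(\gamma,m)$ for Lebesgue-almost every $s$, weak continuity of $b$ via Assumption~\ref{ass:weak_cont}, and then dominated convergence. Your equicontinuity argument to pass from pointwise to uniform convergence is correct but slightly more elaborate than needed --- one can simply bound $\sup_{t\le T}|F^N(t)-F(t)|\le \int_0^T |b(\gamma^N_s,m^N_s)-b(\gamma_s,m_s)|\,ds$ and apply dominated convergence directly to the right-hand side.
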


\begin{proof}
The assertion follows using the same arguments as those of the proof of Lemma \ref{lem:app4}. 
\end{proof}

\begin{lem}\label{lem:mut}
For any bounded and continuous real function $f$ and any probability measure $\mu$ on $(D,\D ),$ the function $\R_+\to\R,$ given by $t\to\mu_t(f)$ is c{\`a}dl{\`a}g.
\end{lem}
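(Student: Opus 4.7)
The plan is to write $\mu_t(f) = \int_D f(\gamma(t)) \mu(d\gamma)$ and then apply dominated convergence twice — once for right-continuity at $t$, and once to produce the left limit at $t$. The only ingredients needed are the defining properties of càdlàg paths, the continuity and boundedness of $f$, and measurability of the evaluation maps $\gamma \mapsto \gamma(t)$ and $\gamma \mapsto \gamma(t^-)$ with respect to $\D$.

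First, for right-continuity, fix $t \geq 0$ and let $s_n \downarrow t$. For every $\gamma \in D$, by right-continuity of càdlàg paths, $\gamma(s_n) \to \gamma(t)$; by continuity of $f$, $f(\gamma(s_n)) \to f(\gamma(t))$. Since $|f(\gamma(s_n))| \leq \|f\|_\infty$ and $\mu$ is a probability measure, dominated convergence yields
\[
\mu_{s_n}(f) = \int_D f(\gamma(s_n)) \, \mu(d\gamma) \xrightarrow[n\to\infty]{} \int_D f(\gamma(t)) \, \mu(d\gamma) = \mu_t(f).
\]

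Next, for the existence of left limits, fix $t > 0$ and let $s_n \uparrow t$ (with $s_n < t$). For every $\gamma \in D$, the left limit $\gamma(t^-)$ exists and $\gamma(s_n) \to \gamma(t^-)$, so $f(\gamma(s_n)) \to f(\gamma(t^-))$ by continuity of $f$. Again by boundedness of $f$ and dominated convergence,
\[
\mu_{s_n}(f) \xrightarrow[n\to\infty]{} \int_D f(\gamma(t^-)) \, \mu(d\gamma),
\]
and this limit does not depend on the chosen sequence $(s_n)$, so $\lim_{s \uparrow t} \mu_s(f)$ exists. Hence $t \mapsto \mu_t(f)$ is càdlàg.

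The only point requiring a brief justification is that the evaluation $\gamma \mapsto \gamma(t)$ is $\D$-measurable (so that $\mu_t(f)$ is well-defined); this is standard for the Borel $\sigma$-algebra of $D(\R_+, \R)$ (see e.g.\ \cite{JS}, Proposition VI.1.14), and the same holds for $\gamma \mapsto \gamma(t^-)$. There is no real obstacle here — the argument is essentially a direct application of dominated convergence, made possible by the pathwise càdlàg regularity built into the space $D$.
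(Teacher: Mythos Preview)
Your proof is correct and follows essentially the same approach as the paper's own proof: both argue right-continuity and existence of left limits via pointwise convergence along paths (using the c\`adl\`ag property of $\gamma$) combined with dominated convergence, and both note the measurability of the evaluation maps. There is no substantive difference.
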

\begin{proof}
Let $t_n \downarrow t$ and $ \gamma \in D.$  Since $\gamma$ is right-continuous, $\gamma(t_n)\to \gamma(t).$ Since $f$ is continuous, $f(\gamma_{t_n})\to f(\gamma_t).$ In other words, $f\circ\pi_{t_n}(\gamma)\to f\circ\pi_t(\gamma)$ for all $\gamma\in D.$ Since $f$ is bounded, and $\mu(D)<\infty,$
dominated convergence gives
$$\mu_{t_n}(f) \coloneqq \int_{D}f\circ\pi_{t_n}(\gamma)d\mu\to\int_{D}f\circ\pi_{t}(\gamma)d\mu \coloneqq \mu_t(f).$$
Let now $t_n\uparrow t.$ Since $\gamma$ admits left limits, $\gamma(t_n)\to \gamma(t-),$ where $\gamma(t-) \coloneqq \lim_{t_n\uparrow t}\gamma(t_n),$
such that, by continuity, $f(\gamma( t_n)) \to f(\gamma(t-)).$ But as a limit of measurable applications, $\lim_{t_n\uparrow t}\pi_{t_n}$ is measurable, hence $f\circ \lim_{t_n\uparrow t}\pi_{t_n}$ is  $(\D,\B(\R))-$measurable, and again by dominated convergence for bounded $f,$
$$\mu_{t_n}(f)=\int_{D}f\circ\pi_{t_n}(\gamma)d\mu\to\int_{D}f\circ(\lim_{t_n\uparrow t}\pi_{t_n})(\gamma)d\mu \coloneqq \mu_{t-}(f).$$ 
\end{proof}

{\noindent\textbf{Acknowledgements}:
Eva L\"ocherbach has received funding from the European Union's Horizon Europe research and innovation programme under the Marie Sklodowska-Curie Actions Staff Exchanges (Grant Agreement No. 101183168, Call: HORIZON-MSCA-2023-SE-01). 
E.M. is member of the Gruppo Nazionale per l'Analisi Matematica, la Probabilit{\`a} e le loro Applicazioni (GNAMPA) of Istituto Nazionale di Alta Matematica (INdAM), and she was partially supported by the project \lq\lq Ferromagnetism versus synchronization: how does disorder destroy universality?\rq\rq; E.M. also acknowledges financial support from the ANR grant ANR-21-CE40-0006 SINEQ and from Fondazione \lq\lq Ing. Aldo Gini\rq\rq. 
}

{\noindent\textbf{Disclaimer}: Funded by the European Union. Views and opinions expressed are however those of the author(s) only and do not necessarily reflect those of the European Union or the European Education and Culture Executive Agency (EACEA). Neither the European Union nor EACEA can be held responsible for them.}

\end{document}